\theoremstyle{plain}
\newtheorem{thm}{Theorem}[section] 
\newtheorem{cor}[thm]{Corollary}
\newtheorem{lem}[thm]{Lemma}
\newtheorem{prop}[thm]{Proposition}
\theoremstyle{definition}
\newtheorem{defn}[thm]{Definition}
\theoremstyle{remark}
\newcommand{\R}{\mathbb{R}}
\newcommand{\C}{\mathbb{C}} 
\newcommand{\vertiii}[1]{{\left\vert\kern-0.25ex\left\vert\kern-0.25ex\left\vert #1 
    \right\vert\kern-0.25ex\right\vert\kern-0.25ex\right\vert}}
\title{On the class of continuous images of non-commutative Valdivia compacta\footnote{Research was supported in part by the 
Universit\`a degli Studi of Milano (Italy) and in part by the Gruppo Nazionale per l'Analisi Matematica, la Probabilit\`a e le loro Applicazioni (GNAMPA) of the Istituto Nazionale di Alta Matematica (INdAM) of Italy.}}
\author{Jacopo Somaglia}
\date{}
\begin{document}
\maketitle
\begin{abstract}
\noindent
We investigate the class of continuous images of non-commutative Valdivia compact spaces, in particular its subclass of weakly non-commutative Corson countably compact spaces. A key tool is the study of non-commutative Corson countably compact spaces and their stability. The results are the non-commutative version of results by O. Kalenda (2003). Moreover, we present a study of retractional skeletons on Aleksandrov duplicates of ordinal spaces.\\

\noindent \textit{MSC:} 54D30, 54C15, 46B26\\

\noindent \textit{Keywords:} full retractional skeleton, countably compact space, continuous image, Aleksandrov duplicates, projectional skeleton
 
\end{abstract}
\section{Introduction}

In order to investigate structural properties of certain topological and Banach spaces, it is often convenient to define special families of retractions on them. For example Amir and Lindenstrauss used projectional resolution of identity (PRI) to characterize Eberlein compact spaces \cite{AmirLin}.
This line of research continued for a long time exploring relations between some classes of compact spaces and non-separable Banach spaces, for example Corson and Valdivia compact spaces, Weakly Lindel\"{o}f determined spaces (WLD) and Plichko spaces. This kind of spaces have been widely studied, we refer to \cite{Kalenda2} for a survey in these topics.\\
A compact space $K$ is called Corson if it is homeomorphic to a subset of
\begin{equation*}
\Sigma(\Gamma)=\{x\in\R^{\Gamma}: \mbox{ supp}(x) \mbox{ is countable} \}
\end{equation*}
for a set $\Gamma$. A compact space $K$ is called Valdivia if it is homeomorphic to some $K^{'}\subset\R^{\Gamma}$ with $K^{'}\cap\Sigma(\Gamma)$ dense in $K^{'}$.\\
In this work we will use retractional skeletons that yield a generalization of Valdivia and Corson compact spaces.
In \cite{KubisMicha} the authors introduced the definition of retractional skeleton and they proved that a compact space is Valdivia if and only if it has a commutative retractional skeleton. In \cite{Cuth1} it is proved that a compact space is Corson if and only if it has a full retractional skeleton. There is a dual formulation of retractional skeleton in Banach space, called projectional skeleton \cite{Kubis1}. This definition is strictly related with Plichko spaces and weakly Lindel\"{o}f determined spaces (WLD), mentioned above.
The paper is organized as follows.\\
In the remaining part of the introductory section notations and basic notions concerning topology and Banach space theory addressed in this paper are given.\\
In Section 2 the classes of non-commutative Corson countably compact spaces and weakly non-commutative Corson countably compact spaces are introduced. These notions are the non-commutative counterparts of similar notions introduced in \cite{Kalenda1}. Moreover several stability properties are studied.\\
In Section 3 the class of weakly non-commutative Valdivia compact spaces is introduced. Also in this case several stability properties are studied.\\
In Section 4 the definition of $[0,\eta)$-sum is recalled. Relations between $[0,\eta)$-sum and countably compact spaces are investigated. Results about ordinal spaces are given.\\
In Section 5 the definition of Aleksandrov duplicates is recalled. Some relations between Aleksandrov duplicates and retractional skeletons are given. \\
In Section 6 consequences of previous sections are studied in Banach space theory setting.\\
We denote with $\omega_{0}$ the set of natural numbers (including 0) with the usual order. Given a set $X$ we denote by $[X]^{\leq \omega_{0}}$ the family of all countable subsets of $X$ and by $|X|$ the cardinality of the set $X$. As usual we denote with $\aleph_{0}$ the smallest infinite cardinal.\\
All the topological spaces are assumed to be Hausdorff and completely regular. Given a topological space $T$ we denote by $\overline{A}$ the closure of $A\subset T$. We say that $A\subset T$ is countably closed if $\overline{C}\subset A$ for every $C\in [A]^{\leq \omega_{0}}$. A topological space $T$ is a Fr\'{e}chet-Urysohn space if for every $A\subset T$ and $x\in \overline{A}$ there is a sequence $\{x_{n}\}_{n\in\omega_{0}}\subset A$ such that $x_{n}\to x$. $\beta T$ denotes the \v{C}ech-Stone compactification of $T$. We use $S^1$ to indicate the complex numbers with absolute value equal to one. As in \cite{Cuth1}, we will use non-commutative Valdivia compacta to indicate the class of compact spaces with retractional skeleton.\\ 
Given a topological compact space $K$ we use $C(K)$ to indicate the space of all real-valued continuous function on $K$ or the space of all complex-valued continuous function on $K$ with the usual norm. Additionally we will use $C(K,\R)$ and $C(K,\C)$ where we want to differentiate. $P(K)$ stands for the space of probability measures with the weak$ ^* $-topology. If $\mu\in C(K)^*$, we denote by $|\mu|$ its total variation. If $\mu$ is a non-negative measure, we denote by supp$\mu$ the support of the measure $\mu$, i.e. the set of those points $x\in K$ such that each neighborhood of $x$ has positive $\mu$-measure. The support of a measure $\mu\in C(K)^*$ coincides with the support of its total variation $|\mu|$.\\
We shall consider Banach spaces over the field of real or complex numbers (most proofs work simultaneously for both cases, when necessary we will point out explicitly the differences). Given a Banach space $X$ and a subset $A\subset X$ we denote by span$(A)$ and conv$(A)$ the linear hull and the convex hull respectively. $B_{X}$ is the norm-closed unit ball of $X$ (i.e. the set $\{x\in X:\, \|x\|\leq 1\}$). As usual $X^{*}$ stands for the (topological) dual space of $X$. Given $A\subset X$ we denote by $A^{\perp}=\{x^{*}\in X^{*}:\, x^{*}(x)=0,\, \forall x\in A \}$. A set $D\subset X^{*}$ is said \textit{r-norming} if 
\begin{equation*}
\|x\|\leq r \sup\{|x^{*}(x)|:\,x^{*}\in D\cap B_{X^{*}}\}
\end{equation*}
for every $x\in X$. We say that a set $D\subset X^{*}$ is norming if it is \textit{r-norming} for some $r\geq 1$.\\

\section{Non-commutative Corson countably compact spaces}

In this paper we will use retractional skeletons also in countably compact setting. We recall the following definition.

\begin{defn}
A retractional skeleton in a countably compact space $X$ is a family of continuous retractions $\{r_{s}\}_{s\in\Gamma}$, indexed by an up-directed partially ordered set $\Gamma$, such that
\begin{enumerate}[$(i)$]
\item $r_{s}[X]$ is a metrizable compact space for each $s\in\Gamma$,
\item $s,t\in\Gamma$, $s\leq t$ then $r_{s}=r_t\circ r_s=r_s\circ r_t$,
\item given $s_{0}\leq s_{1}\leq...$ in $\Gamma$, $t=\sup_{n\in\omega_{0}}s_{n}$ exists and $r_{t}(x)=\lim_{n\to \infty}r_{s_{n}}(x)$ for every $x\in X$,
\item for every $x\in X$, $x=\lim_{s\in\Gamma}r_{s}(x)$.
\end{enumerate}
We say  that $D=\bigcup_{s\in\Gamma}r_{s}[X]$ is the set induced by the retractional skeleton $\{r_{s}\}_{s\in\Gamma}$ in $X$.\\
If $D=X$ we will say that $\{r_{s}\}_{s\in\Gamma}$ is a \textit{full retractional skeleton} and $X$ is a \textit{non-commmutative Corson countably compact space}.
\end{defn}

We recall some useful and well-known results about retractional skeletons.

\begin{thm}\cite[Theorem 32]{Kubis1}\label{PropRetr}
Assume $D$ is induced by a retractional skeleton in a compact space $K$. Then:
\begin{enumerate}[$(i)$]
\item $D$ is dense in $K$ and for every countable set $A\subset D$, $\overline{A}$ is metrizable and contained in $D$.
\item $D$ is a Fr\'{e}chet-Urysohn space;
\item $D$ is a normal space and $K=\beta D$.
\end{enumerate}
\end{thm}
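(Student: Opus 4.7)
The plan is to establish the three assertions in order, with each using the previous ones.

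\smallskip

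\noindent\textbf{Assertion (i).} Density is immediate from property (iv): for any $x\in K$, $r_s(x)\in D$ and $r_s(x)\to x$ along $\Gamma$. For the metrizability claim, enumerate $A=\{a_n\}_{n\in\omega_0}\subset D$, and for each $n$ pick $t_n\in\Gamma$ with $a_n=r_{t_n}(a_n)$. Using up-directedness of $\Gamma$ repeatedly, I construct a chain $u_0\leq u_1\leq\cdots$ in $\Gamma$ with $u_n$ an upper bound of $t_0,\dots,t_n$; property (iii) of the skeleton then gives $t:=\sup_n u_n\in\Gamma$. The commutation relation (ii) shows that $r_t(a_n)=a_n$ for every $n$, hence $A\subset r_t[K]$, and consequently $\overline{A}\subset r_t[K]\subset D$ is metrizable compact.

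\smallskip

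\noindent\textbf{Assertion (ii).} Thanks to (i), it suffices to establish the following countable tightness property at each $x\in D$: whenever $A\subset D$ and $x\in\overline{A}$, there is a countable $A'\subset A$ with $x\in\overline{A'}$; then metrizability of $\overline{A'}$ produces the desired sequence. I would build $A'$ inductively: fix $s_0\in\Gamma$ with $x=r_{s_0}(x)$; having chosen $s_n$ and a countable $A_n\subset A$ at stage $n$, observe that $x=r_{s_n}(x)\in\overline{r_{s_n}(A)}$ by continuity, extract by metrizability of $r_{s_n}[K]$ a countable $A_{n+1}\supset A_n$ in $A$ with $x\in\overline{r_{s_n}(A_{n+1})}$, and then choose $s_{n+1}\geq s_n$ via the method in (i) so that $A_{n+1}\subset r_{s_{n+1}}[K]$. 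Setting $s=\sup_n s_n$ (by (iii)) and $A'=\bigcup_n A_n$, one has $A'\subset r_s[K]$, and uses the pointwise convergence $r_{s_n}\to r_s$ on $K$ together with the identity $r_s(a)=a$ for $a\in A'$ to argue $x\in\overline{A'}$ inside the metrizable compact $r_s[K]$. This diagonal argument, transferring accumulation of $r_{s_n}(A_n)$ at $x$ to accumulation of $A'$ itself at $x$, is the main technical obstacle.

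\smallskip

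\noindent\textbf{Assertion (iii).} Normality of $D$ follows by separating disjoint relatively closed subsets $F_1,F_2\subset D$: their $K$-closures can intersect only in $K\setminus D$, and combining normality of $K$ with the approximation of points outside $D$ by points of $D$ provided by (iv), together with the Fr\'echet-Urysohn property from (ii), yields disjoint open separations in $D$. For $K=\beta D$, by density of $D$ and compactness of $K$ it is enough to show $D$ is $C^*$-embedded. A bounded continuous $f:D\to\R$ extends continuously to $K$ because the Fr\'echet-Urysohn property (ii) forces the value of any extension at a point $y\in K$ to be a sequential limit along a countable subset of $D$, and the metrizability of countable closures (i) guarantees that these candidate limits are well-defined and glue consistently into a continuous extension on $K$.
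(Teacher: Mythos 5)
The paper does not prove this statement at all (it is quoted from Kubi\v{s}, Theorem 32), so your attempt can only be judged on its own merits. Your part (i) is correct and complete, and the reduction of (ii) to countable tightness at points of $D$ plus the closing-off construction of $(s_n)$ and $(A_n)$ is the right strategy. But the step you yourself call ``the main technical obstacle'' is a genuine gap, and the two ingredients you invoke (pointwise convergence $r_{s_n}\to r_s$ and $r_s(a)=a$ for $a\in A'$) do not by themselves close it: you only know that some $a_n\in A_{n+1}$ satisfy $r_{s_n}(a_n)\to x$, and since the points move with $n$, pointwise (non-uniform) convergence gives no control of the distance between $r_{s_n}(a_n)$ and $a_n$ along this diagonal. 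The missing idea is to combine compactness of $M=r_s[K]$ with the commutation relation: pass to a subsequence $a_{n_k}\to y\in\overline{A'}\subset M$; for each fixed $m$, $r_{s_m}\circ r_{s_{n_k}}=r_{s_m}$ for $n_k\ge m$, so applying the continuous map $r_{s_m}$ to $r_{s_{n_k}}(a_{n_k})\to x$ and to $a_{n_k}\to y$ gives $r_{s_m}(x)=r_{s_m}(y)$; letting $m\to\infty$ and using $r_s(x)=x$, $r_s(y)=y$ yields $x=y\in\overline{A'}$. Without some argument of this kind, (ii) is not proved.

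Part (iii) is worse: the mechanism you propose is inconsistent with your own part (i). No point $y\in K\setminus D$ lies in the closure of a countable subset of $D$ (otherwise $y\in D$ by (i)), so no such $y$ is the limit of a sequence from $D$; the example $K=[0,\omega_1]$, $D=[0,\omega_1)$ shows this concretely, and there $K=\beta D$ holds even though $\omega_1$ is not sequentially approachable from $D$. Hence ``the value of any extension at $y\in K$ is a sequential limit along a countable subset of $D$'' fails exactly at the points where the extension must be produced, and the normality sketch likewise never engages the real crux. Both assertions of (iii) reduce to the claim that disjoint relatively closed subsets $F,G\subset D$ have disjoint closures in $K$: granted this, normality of $D$ follows by pulling back a separation given by normality of $K$, and $K=\beta D$ follows from Urysohn's extension criterion (completely separated subsets of $D$ remain completely separated in $K$). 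Proving that claim needs another skeleton-based closing-off argument of the same flavour as above: if $z\in\overline{F}\cap\overline{G}$, build $s_1\le s_2\le\cdots$ with supremum $s$ and points $x_n\in F$, $y_n\in G$ with $x_n,y_n\in r_{s_{n+1}}[K]$ and $r_{s_m}(x_n)$, $r_{s_m}(y_n)$ within $2^{-n}$ of $r_{s_m}(z)$ (in fixed metrics on the $r_{s_m}[K]$) for all $m\le n$; cluster points $x^*\in F$, $y^*\in G$ inside the metrizable compact $r_s[K]$ then satisfy $r_{s_m}(x^*)=r_{s_m}(z)=r_{s_m}(y^*)$ for every $m$, whence $x^*=y^*$, contradicting $F\cap G=\emptyset$. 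Since nothing playing this role appears in your proposal, assertion (iii) remains unproved.
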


In particular we observe that given a retractional skeleton, in a compact space $X$, its induced space $D$ is countably compact.

\begin{prop}\cite[Proposition 4.5]{CuthKalenda}\label{CaratCountably}
Let $X$ be a countably compact space. Then $X$ has a full retractional skeleton if and only if it is induced by a retractional skeleton in $\beta X$.\\
Moreover, if $\{r_{s}\}_{s\in\Gamma}$ is a full retractional skeleton in $X$, then there is a retractional skeleton $\{R_{s}\}_{s\in \Gamma}$ in $\beta X$ inducing $X$ such that $R_{s}\upharpoonright_X=r_{s}$ for every $s\in \Gamma$.
\end{prop}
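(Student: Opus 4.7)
The plan is to prove both implications of the biconditional, with the bulk of the work in the ``$\Rightarrow$'' direction, which simultaneously yields the ``moreover'' refinement.

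For the easier ``$\Leftarrow$'' direction, suppose $\{R_s\}_{s\in\Gamma}$ is a retractional skeleton in $\beta X$ whose induced set equals $X$. Then each image $R_s[\beta X]$ lies inside $\bigcup_t R_t[\beta X]=X$, so $r_s:=R_s\upharpoonright_X$ is a retraction of $X$ onto $r_s[X]=R_s[\beta X]$, which is still a metrizable compactum. Properties (i)--(iv) for $\{R_s\}$ on $\beta X$ restrict immediately to the same properties for $\{r_s\}$ on $X$, and fullness $\bigcup_s r_s[X]=X$ is the hypothesis on the induced set.

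For the ``$\Rightarrow$'' direction, given a full retractional skeleton $\{r_s\}_{s\in\Gamma}$ in $X$, I would define $R_s:\beta X\to r_s[X]$ as the unique continuous extension of $r_s:X\to r_s[X]\subseteq\beta X$ granted by the universal property of the \v{C}ech--Stone compactification, which applies because $r_s[X]$ is compact Hausdorff. The construction automatically gives $R_s\upharpoonright_X=r_s$, settling the ``moreover'' clause. Property (i) is immediate since $R_s[\beta X]=r_s[X]$. Property (ii) holds on the dense subset $X$ by the corresponding property of $\{r_s\}$ and extends to all of $\beta X$ by continuity of both sides. For property (iii), given a chain $s_0\le s_1\le\cdots$ with supremum $t\in\Gamma$ and any $x\in\beta X$, the key trick is to observe $R_{s_n}(x)=R_{s_n}(R_t(x))$ via (ii) and then exploit that $R_t(x)$ already lies in $r_t[X]\subseteq X$: property (iii) for $\{r_s\}$ applied at $R_t(x)\in X$ yields $R_{s_n}(R_t(x))=r_{s_n}(R_t(x))\to r_t(R_t(x))=R_t(x)$, hence $R_{s_n}(x)\to R_t(x)$. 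The induced set of $\{R_s\}$ is then $\bigcup_s R_s[\beta X]=\bigcup_s r_s[X]=X$ by fullness.

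The main obstacle is property (iv), namely $R_s(x)\to x$ for every $x\in\beta X$. For $x\in X$ this is immediate from (iv) for $\{r_s\}$, but for $x\in\beta X\setminus X$ one cannot directly extend pointwise convergence from the dense subset $X$. My plan is first to establish that $X$ is countably closed in $\beta X$: for a countable $C\subseteq X$, fullness provides $s_c\in\Gamma$ with $c\in r_{s_c}[X]$ for each $c\in C$, and $\sigma$-directedness of $\Gamma$ (a consequence of up-directedness together with (iii)) furnishes an upper bound $s^*$ of $\{s_c:c\in C\}$; since $r_{s^*}[X]$ is compact, hence closed in $\beta X$, and contains $C$, one obtains $\overline{C}^{\beta X}\subseteq r_{s^*}[X]\subseteq X$. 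Then I would examine an arbitrary cluster point $y\in\beta X$ of the net $(R_s(x))_{s\in\Gamma}$: the identities $R_t\circ R_s=R_t$ for $s\ge t$ force $R_t(y)=R_t(x)$ for every $t\in\Gamma$. The delicate remaining step is to combine this equality with (iv) for $\{r_s\}$ and the countable closedness just established to conclude $y=x$; equivalently, to show that the family $\{R_t\}_{t\in\Gamma}$ separates the points of $\beta X$. I expect this separation step to be the technical heart of the argument, and the place where countable compactness of $X$ is used in an essential way.
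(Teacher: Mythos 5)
The paper itself gives no proof of this statement (it is quoted directly from C\'{u}th--Kalenda, Proposition 4.5), so your attempt can only be judged on its own terms. Most of what you write is correct: the ``$\Leftarrow$'' direction, the choice of $R_s=\beta r_s$ mapping $\beta X$ onto $r_s[X]$ (which indeed yields the ``moreover'' clause), the density argument for (ii), the verification of (iii) via the point $R_t(x)\in r_t[X]\subseteq X$, the proof that $X$ is countably closed in $\beta X$, and the observation that every cluster point $y$ of the net $(R_s(x))_{s\in\Gamma}$ satisfies $R_t(y)=R_t(x)$ for all $t$.

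The genuine gap is condition (iv), which you explicitly leave open, and it is not a routine loose end: it is exactly where countable compactness of $X$ must be used, and the reduction you propose does not make real progress, since ``$\{R_t\}_{t\in\Gamma}$ separates points of $\beta X$'' is literally equivalent to (iv) once you have the cluster-point identity. Moreover the tools you list for the last step (countable closedness of $X$ plus (iv) for $\{r_s\}$) cannot suffice by themselves, because both $x$ and the bad cluster point $y$ may lie in $\beta X\setminus X$, where (iv) for $\{r_s\}$ says nothing; pointwise convergence of $r_s$ to the identity on the dense set $X$ gives no control of the extended maps at corona points (compare $g_n=\chi_{[n,\infty)}$ on $\mathbb{N}$: $g_n\to 0$ pointwise on $\mathbb{N}$ while $\beta g_n\equiv 1$ on $\beta\mathbb{N}\setminus\mathbb{N}$). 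What is missing is an extra idea of the following kind: if $f(R_s(x))\not\to f(x)$ for some $f\in C(\beta X)$, use continuity of each $f\circ R_s$ to find closed neighborhoods $W_s$ of $x$ on which $f\circ R_s$ stays far from $f(x)$; then build interleaved sequences $s_0\leq s_1\leq\dots$ in $\Gamma$ and $z_n\in X\cap W_{s_0}\cap\dots\cap W_{s_n}$ with $|f(z_n)-f(x)|$ small --- such $z_n$ exist because countable compactness of $X$ makes $X$ $G_\delta$-dense in $\beta X$ --- while $s_{n+1}$ catches $z_n$ (fullness). Putting $t=\sup_n s_n$, a cluster point $z^*$ of $(z_n)$ lies in $X$, in fact in $r_t[X]$, so $r_{s_n}(z^*)\to r_t(z^*)=z^*$ by (iii), contradicting that $f(r_{s_n}(z^*))$ stays far from $f(x)$ while $f(z^*)$ is close to it. Some such catching argument (or the detour through monotone retractability that C\'{u}th and Kalenda actually use) is the technical heart of the proposition, and it is precisely the part your proposal does not supply.
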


\noindent
We observe that non-commutative Corson countably compact spaces are a generalization of Corson countably compact spaces given in \cite{Kalenda1}. Moreover, let $X$ be a countably compact space, it is a Corson countably compact space if and only if $X$ has a commutative full retractional skeleton. In fact:\\
$(\Rightarrow)$ Let $h: X\to [0,1]^{\Gamma}$ be a continuous injection of $X$ into $\Sigma(\Gamma)$, for some set $\Gamma$, then $\overline{h(X)}$ is a Valdivia compact space, hence by \cite[Theorem 6.1]{KubisMicha} it has a commutative retractional skeleton such that its induced subspace is $h(X)$. Hence $X$ has a commutative full retractional skeleton.\\
$(\Leftarrow)$ Suppose that $X$ has a commutative full retractional skeleton, then, by Proposition \ref{CaratCountably} $\beta X$ is a Valdivia compact space. By \cite[Theorem 6.1]{KubisMicha}, $X$ is a dense $\Sigma$-subspace of $\beta X$, hence it is a Corson countably compact space.

\begin{lem}\cite[Lemma 3.5]{Cuth1}\label{SubspaceRetra}
Let $K$ be a compact space, $F\subset K$ closed subset and let $D\subset K$ be such that $D$ is induced by a retractional skeleton in $K$. If $D\cap F$ is dense in $F$, then $D\cap F$ is induced by a retractional skeleton in $F$.
\end{lem}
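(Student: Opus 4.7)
The plan is to recycle the given skeleton: let $\Gamma':=\{s\in\Gamma:r_s[F]\subset F\}$ with the order induced from $\Gamma$, and set $R_s:=r_s\upharpoonright_F$ for $s\in\Gamma'$. Each $R_s$ is then a continuous retraction on $F$ whose image $R_s[F]\subset r_s[K]$ is metrizable, and the composition identities $R_sR_t=R_tR_s=R_s$ for $s\leq t$ in $\Gamma'$ follow at once from the corresponding identities for $\{r_s\}_{s\in\Gamma}$; so conditions $(i)$ and $(ii)$ of Definition 2.1 come for free. Condition $(iii)$ is equally easy: given an increasing chain $s_0\leq s_1\leq\cdots$ in $\Gamma'$ with supremum $t\in\Gamma$, for every $x\in F$ we have $r_{s_n}(x)\in F$, and since $r_t(x)=\lim_n r_{s_n}(x)$ and $F$ is closed, $r_t(x)\in F$; hence $t\in\Gamma'$.

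The crux of the argument is then the following cofinality claim: for every $s\in\Gamma$ and every countable $A\subset D\cap F$ there exists $t\in\Gamma'$ with $t\geq s$ and $A\subset r_t[K]$. Granted it, $\Gamma'$ is up-directed, condition $(iv)$ follows from the original convergence $r_s(x)\to x$ on $\Gamma$ combined with the cofinality of $\Gamma'$, and the equality $\bigcup_{s\in\Gamma'}R_s[F]=D\cap F$ is straightforward: the inclusion $\subset$ is built into the definition of $\Gamma'$, while the opposite inclusion is obtained by applying the claim to $A=\{x\}$ for $x\in D\cap F$, producing $t\in\Gamma'$ with $x\in r_t[K]$ and therefore $x=R_t(x)\in R_t[F]$.

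To prove the cofinality claim I would perform a countable closing-off, building inductively $s=s_0\leq s_1\leq\cdots$ in $\Gamma$ and countable sets $A\subset A_0\subset A_1\subset\cdots$ in $D\cap F$, with $A_n\subset r_{s_n}[K]$, so that at the limit $t:=\sup_n s_n$ one has $r_t[F]\subset\overline{\bigcup_n A_n}$. Then $\overline{\bigcup_n A_n}\subset D$ by Theorem \ref{PropRetr}$(i)$ and $\overline{\bigcup_n A_n}\subset F$ by closedness of $F$, so $r_t[F]\subset D\cap F$, hence $t\in\Gamma'$. The delicate point, which is the main obstacle, is to design this inductive step correctly: at stage $n$ the set $r_{s_n}[F]$ need not be contained in $F$ and therefore cannot be approximated directly by elements of $D\cap F$. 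Instead one has to use the pointwise convergence $r_{s_n}(x)\to r_t(x)$ together with the metrizability of each $r_{s_n}[K]$ and the density of $D\cap F$ in $F$ to enrich $A_{n+1}$ with carefully chosen approximations, forcing the final closure $\overline{\bigcup_n A_n}$ to capture every limit $r_t(x)$ for $x\in F$. Such bookkeeping is most transparently handled inside a countable elementary submodel $M\prec H(\theta)$ containing $K$, $F$, $\Gamma$, $\{r_s\}_{s\in\Gamma}$ and $A$, with $t$ read off from $\Gamma\cap M$, as in \cite{Cuth1}.
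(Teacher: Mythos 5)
You should first note that the paper does not prove this lemma at all: it is quoted verbatim from \cite{Cuth1} (Lemma 3.5 there), so the only proof to compare with is C\'uth's, which runs through his elementary-submodel (``suitable model'') machinery. Your reduction is sound as far as it goes: if $\Gamma'=\{s\in\Gamma: r_s[F]\subset F\}$ can be shown to be cofinal in $\Gamma$ (together with the bookkeeping making a given countable $A\subset D\cap F$ land in $r_t[K]$), then the verifications of conditions $(i)$--$(iv)$ and of $\bigcup_{s\in\Gamma'}R_s[F]=D\cap F$ are correct, and this is indeed the shape of the known argument, where the good indices $t$ arise as suprema of increasing sequences in $\Gamma\cap M$ for suitable models $M$ containing $K,F,D,\{r_s\}_{s\in\Gamma}$.

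The genuine gap is that the cofinality claim is not an auxiliary technicality but the entire content of the lemma, and your proposal does not prove it. The inductive step you sketch does not close in its naive form: at stage $n$, density of $D\cap F$ in $F$ together with continuity of $r_{s_n}$ only lets you choose $y\in D\cap F$ (from $r_{s_n}^{-1}$ of a small ball) such that $r_{s_n}(y)$ is close to $r_{s_n}(x)$; nothing controls the distance between $y$ and $r_{s_n}(y)$, since the retraction may move $y$ far. Hence the closure $\overline{\bigcup_n A_n}$ is only seen to capture limits of the projected points $r_{s_n}(y)$, which lie in $D$ but not necessarily in $F$, whereas what is needed is a point of $D\cap F$ close to $r_t(x)$ itself --- i.e.\ exactly the circularity ($r_t(x)\in\overline{D\cap F}=F$) that the lemma must break. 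Note also that your target $r_t[F]\subset\overline{\bigcup_n A_n}$ is strictly stronger than $r_t[F]\subset F$, so the ``carefully chosen approximations'' would have to do real work that is nowhere specified. Deferring this step to a countable elementary submodel ``as in \cite{Cuth1}'' amounts to citing the very proof you are asked to supply; as written, the argument establishes the easy formal part (restriction of a cofinal, $\sigma$-closed subfamily) but leaves the essential step unproven.
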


\begin{prop}\cite[Proposition 31]{Kubis1}\label{ProdRetr}
The class of non-commutative Valdivia compacta is closed under arbitrary products. Moreover if $\{K_{n}\}_{n\in\omega_{0}}$ is a countable family of non-commutative Valdivia compact spaces and $D_{n}\subset K_{n}$ is an induced subspace for every $n\in\omega_{0}$, then $D=\prod\limits_{n\in\omega_{0}}D_{n}$ is an induced space of $K=\prod\limits_{n\in\omega_{0}}K_{n}$.
\end{prop}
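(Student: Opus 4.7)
The plan is to construct retractional skeletons on products coordinatewise from the given skeletons on the factors, using two different choices of directed index set. For uncountable products I must restrict to countable supports in order to keep the images of the retractions metrizable, which produces a $\Sigma$-type induced subspace; for countable products the simpler full product of index sets works and recovers exactly $\prod_{n}D_{n}$, yielding the moreover statement.

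For the arbitrary product assertion, let $\{K_{\alpha}\}_{\alpha\in A}$ be non-commutative Valdivia compacta with retractional skeletons $\{r^{\alpha}_{s}\}_{s\in\Gamma_{\alpha}}$ inducing $D_{\alpha}$. For each $\alpha$ I fix $x^{0}_{\alpha}\in D_{\alpha}$ together with some $s^{0}_{\alpha}\in\Gamma_{\alpha}$ for which $x^{0}_{\alpha}\in r^{\alpha}_{s^{0}_{\alpha}}[K_{\alpha}]$; the retraction property forces $r^{\alpha}_{s^{0}_{\alpha}}(x^{0}_{\alpha})=x^{0}_{\alpha}$, and axiom $(ii)$ upgrades this to $r^{\alpha}_{t}(x^{0}_{\alpha})=x^{0}_{\alpha}$ for every $t\geq s^{0}_{\alpha}$ in $\Gamma_{\alpha}$. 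I would take $\Gamma$ to be the set of pairs $s=(C_{s},(t^{s}_{\alpha})_{\alpha\in C_{s}})$ with $C_{s}\in[A]^{\leq\omega_{0}}$ and $t^{s}_{\alpha}\in\Gamma_{\alpha}$ satisfying $t^{s}_{\alpha}\geq s^{0}_{\alpha}$, ordered by $s\leq s'$ iff $C_{s}\subset C_{s'}$ and $t^{s}_{\alpha}\leq t^{s'}_{\alpha}$ for every $\alpha\in C_{s}$; up-directedness of $\Gamma$ descends from up-directedness of each $\Gamma_{\alpha}$. The retraction is defined coordinatewise: $r_{s}(x)_{\alpha}=r^{\alpha}_{t^{s}_{\alpha}}(x_{\alpha})$ if $\alpha\in C_{s}$, and $r_{s}(x)_{\alpha}=x^{0}_{\alpha}$ otherwise.

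Verifying the four skeleton axioms is then essentially bookkeeping. Axiom $(i)$ holds because $r_{s}[K]$ is homeomorphic to the countable product $\prod_{\alpha\in C_{s}}r^{\alpha}_{t^{s}_{\alpha}}[K_{\alpha}]$ of metrizable compacta. Axiom $(ii)$ reduces coordinatewise to the commutation in each $\Gamma_{\alpha}$ together with the stabilization $r^{\alpha}_{t^{s'}_{\alpha}}(x^{0}_{\alpha})=x^{0}_{\alpha}$, which is precisely why the lower bound $t^{s}_{\alpha}\geq s^{0}_{\alpha}$ was imposed. Axiom $(iv)$ follows because convergence in the product topology is tested on finite subsets $F\subset A$: applying axiom $(iv)$ of $\Gamma_{\alpha}$ to $x_{\alpha}$ and the chosen neighborhoods for $\alpha\in F$ produces indices $t^{0}_{\alpha}\geq s^{0}_{\alpha}$, and $s_{0}=(F,(t^{0}_{\alpha})_{\alpha\in F})$ is the required threshold. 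The delicate step is axiom $(iii)$: given an increasing chain $s_{0}\leq s_{1}\leq\cdots$ in $\Gamma$, the set $C=\bigcup_{n}C_{s_{n}}$ is countable, and for each $\alpha\in C$ axiom $(iii)$ in $\Gamma_{\alpha}$ produces a supremum $t_{\alpha}\in\Gamma_{\alpha}$ of the eventually-defined increasing sequence $(t^{s_{n}}_{\alpha})_{n}$; the pair $t=(C,(t_{\alpha})_{\alpha\in C})$ is then the supremum in $\Gamma$, and $r_{t}(x)=\lim_{n}r_{s_{n}}(x)$ by coordinatewise verification (for $\alpha\notin C$ the values are constantly $x^{0}_{\alpha}$).

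For the moreover part, the countability of the factor family lets me take the simpler index set $\Gamma=\prod_{n\in\omega_{0}}\Gamma_{n}$ with componentwise order and $r_{(s_{n})_{n}}(x)=(r^{n}_{s_{n}}(x_{n}))_{n}$. The same coordinatewise checks give a retractional skeleton in $K$; no base-point correction is needed because every coordinate is always active, so axiom $(ii)$ works directly. The induced subspace equals $\bigcup_{(s_{n})}\prod_{n}r^{n}_{s_{n}}[K_{n}]=\prod_{n}D_{n}$, since each factor lies inside $D_{n}$ and conversely every $y_{n}\in D_{n}$ is of the form $r^{n}_{s_{n}}(x_{n})$ for suitable $s_{n}\in\Gamma_{n}$ and $x_{n}\in K_{n}$.
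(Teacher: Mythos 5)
Your construction is correct: the countable-support, fixed-basepoint coordinatewise skeleton (with the lower bound $t^{s}_{\alpha}\geq s^{0}_{\alpha}$ guaranteeing that the basepoints are fixed, which is exactly what axiom $(ii)$ needs) handles arbitrary products, and the full product of index sets recovers $\prod_{n}D_{n}$ in the countable case. Note that the paper does not prove this proposition at all --- it is quoted from Kubi\'{s} \cite[Proposition 31]{Kubis1} --- and your argument is essentially the standard one given there, so there is nothing to reconcile.
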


Now we give the definition of weakly non-commutative Corson countably compact space, which is a generalization of weakly Corson countably compact space introduced in \cite{Kalenda1}.

\begin{defn}
Let $X$ be a countably compact space, we say that it is a \textit{weakly non-commutative Corson countably compact space} if there exists a continuous onto mapping $f:Y\to X$ such that $Y$ is a non-commutative Corson countably compact space.
\end{defn}

We now give the definition of the countably compact version of the one-point compactification.

\begin{defn}\label{def1pointmod}
Let $\{X_{\alpha}\}_{\alpha\in A}$ be a family of countably compact spaces, we say that $X=(\bigoplus\limits_{\alpha\in A}X_{\alpha})\cup\{\infty\}$ is an \textit{one-point countably compact modification of topological sum} if $X$ is countably compact and each $X_{\alpha}$ is a clopen subset of $X$.
\end{defn}

We observe that the previous definition is different from the definition of one-point modification given in \cite{Kalenda1}. Using that definition, Lemma 2.1 of \cite{Kalenda1} is not correct. In fact, let $A=\{1,2\}$, $X_{1}=X_{2}=[0,\omega_{1})$ with usual topology and $X$ be the one-point compatification of $X_{1}\oplus X_{2}$, moreover we observe that by \cite[Example 1.10]{Kalenda2} the space $X$ is not Valdivia. Let $\Gamma_{i}=[0,\omega_{1})$ and
\begin{equation*}
\begin{split}
f_{i}:&X_{i}\to \Sigma(\Gamma_{i})\\
&\alpha\mapsto\chi_{[0,\alpha)},
\end{split}
\end{equation*}
for $i=1,2$. As in \cite[Lemma 2.1]{Kalenda1} we define $\Gamma=\{(i,\gamma):i\in A,\gamma\in\Gamma_{\alpha}\}\cup \{(i,A):i\in A\}$ and $f:X\to \R^{\Gamma}$ 
\begin{equation*}
f(x)(i,\gamma)=\begin{cases}
f_{i}(x)(\gamma) &x\in X_{i},\gamma\in\Gamma_{i}\\
1,&x\in X_{i},\gamma=A\\
0, &\mbox{otherwise.}
\end{cases}
\end{equation*}
\underline{Claim:} $f$ is not continuous. In fact, let $\{\alpha\}_{\alpha<\omega_{1}}\subset X_{1}$, it is a converging net to $\infty$ in $X$. Using the definition of $f$, $\{f(\alpha)\}_{\alpha<\omega_{1}}$ does not converge to $f(\infty)$ in $\R^{\Gamma}$. Hence it cannot be continuous.\\
Finally, we observe that using Definition \ref{def1pointmod}, Lemma 2.1 of \cite{Kalenda1} is correct.

\begin{lem}\label{Operznoncomm}
The class of non-commutative Corson countably compact spaces is closed under
\begin{enumerate}[(1)]
\item countably closed subspaces,
\item countable products,
\item finite topological sums,
\item one-point countably compact modifications of topological sums,
\item quotient images.
\end{enumerate}
\end{lem}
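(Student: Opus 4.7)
The overall strategy is to use Proposition~\ref{CaratCountably} to reduce each item to the task of exhibiting a retractional skeleton on the \v{C}ech--Stone compactification of the given countably compact space whose induced set coincides with the space itself.

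For item (1), given a countably closed $Y\subset X$, set $F=\overline{Y}^{\beta X}$. Since $Y$ is dense in $F$, so is $X\cap F$, and Lemma~\ref{SubspaceRetra} supplies a retractional skeleton in $F$ inducing $X\cap F$. Theorem~\ref{PropRetr}(ii) makes $X\cap F$ Fr\'echet--Urysohn, so every one of its points is the limit of a sequence from $Y$; countable closedness of $Y$ in $X$ then forces $X\cap F=Y$. Since $F=\beta Y$ by Theorem~\ref{PropRetr}(iii), Proposition~\ref{CaratCountably} concludes the argument. For (2), lift each $X_n$-skeleton to one on $\beta X_n$ via Proposition~\ref{CaratCountably} and apply Proposition~\ref{ProdRetr} to obtain a skeleton on $\prod_{n}\beta X_n$ inducing $\prod_{n} X_n$, which is automatically countably compact by the remark after Theorem~\ref{PropRetr}.

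For (3) and (4) the work is to combine the skeletons $\{r^{\alpha}_{s}\}_{s\in\Gamma_{\alpha}}$ on the summands. Each $X_\alpha$ is clopen in $X$ and hence $C^{*}$-embedded, so $\overline{X_\alpha}^{\beta X}=\beta X_\alpha$; in case (3) one even has $\beta X=\bigoplus_{\alpha\in A}\beta X_\alpha$. In both cases a natural combined skeleton on $\beta X$ is indexed by pairs $(F,(s_\alpha)_{\alpha\in F})$ with $F\in[A]^{<\omega_{0}}$, ordered by componentwise extension and refinement; the associated retraction acts as $r^{\alpha}_{s_\alpha}$ on $\beta X_\alpha$ for $\alpha\in F$ and sends every other point to $\infty$ (or to a fixed base-point in case (3)). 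The only non-routine verification is continuity at $\infty$ in case (4), which follows once one checks that in a countably compact one-point modification every neighborhood of $\infty$ must contain all but finitely many $X_\alpha$; otherwise a sequence picking one point from each excluded $X_\alpha$ would have no cluster point, contradicting countable compactness.

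Item (5) is the main obstacle. Given a quotient surjection $f:X\to Y$ onto a countably compact space, $f$ extends to $\tilde f:\beta X\to\beta Y$, and the goal is to produce a skeleton on $\beta Y$ inducing $Y$. The naive push-forward $\tilde f\circ r_s$ generally fails to factor through $\tilde f$, so instead one must exploit that by Theorem~\ref{PropRetr}(ii) $X$ is Fr\'echet--Urysohn: every point of $Y$ is the $\tilde f$-image of a sequential limit of points from $\bigcup_{s\in\Gamma}r_{s}[X]$, and each $\tilde f(r_{s}[X])$ is a compact metrizable subset of $\beta Y$. The technical heart of the argument is to assemble these metrizable compacta as ranges of a directed family of retractions on $\beta Y$ satisfying the four retractional skeleton axioms with induced set exactly $Y$; the limit axiom for countable chains, together with the identification of the induced set as $Y$ rather than some larger countably closed subset, is where compatibility with $f$ is most delicate.
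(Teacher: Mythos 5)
The decisive gap is item (5). What you offer there is a programme, not a proof: you extend $f$ to $\tilde{f}:\beta X\to\beta Y$ and note that each $\tilde{f}(r_{s}[X])$ is a metrizable compactum in $Y$, but you never define any retraction of $\beta Y$ onto these sets, and you explicitly defer exactly the points that constitute the proof --- verifying the four skeleton axioms and identifying the induced set as $Y$ --- as ``the technical heart''. Nothing in the data $(f,\{r_{s}\})$ produces such retractions, since $r_{s}$ need not factor through $f$; this is precisely where quotientness (as opposed to mere continuous surjectivity) must be used, and it never actually enters your argument. The paper does not construct the skeleton by hand either: it obtains (5) at once by combining the characterization of countably compact spaces with a full retractional skeleton as the monotonically retractable ones (\cite[Theorem 1.1]{CuthKalenda}) with the preservation of monotone retractability under quotient images (\cite[Theorem 3.6]{RojasTkachuk}). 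Without that detour, or a genuine construction replacing it, item (5) remains unproven in your write-up.

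There is also a concrete defect in your construction for (4): indexing by pairs $(F,(s_{\alpha})_{\alpha\in F})$ with $F\in[A]^{<\omega_{0}}$ violates axiom $(iii)$ of the definition whenever $A$ is infinite, because an increasing sequence of indices whose finite supports strictly grow has no upper bound at all in the finite-support poset, so the required supremum does not exist. The paper's skeleton for (4) is indexed by elements of $\prod_{\alpha}\Gamma^{'}_{\alpha}$ with \emph{countable} support exactly for this reason; the corresponding images, being countably compact, regular and with countable network, are still compact metrizable, so axiom $(i)$ survives the enlargement. (In (3) the finite-support/base-point device is unnecessary: with finitely many summands the paper simply takes $\Gamma_{1}\times\dots\times\Gamma_{n}$ and lets $r_{\gamma}$ act as $r^{k}_{\gamma_{k}}$ on each $X_{k}$; if you do keep a base point you must also arrange that it is fixed by the retraction, or idempotence fails.) Your items (1) and (2) are correct and coincide with the paper's argument, with (1) usefully filling in why $X\cap F=Y$ and $F=\beta Y$, and your continuity check at $\infty$ in (4) matches the paper's, provided you note that the chosen sequence lies in the closed, hence countably compact, set $X\setminus U$, so its cluster points cannot be $\infty$.
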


\begin{proof}
\begin{enumerate}[$(1)$]
\item Let $Y$ be a countably closed subspace of a non-commutative Corson countably compact space $X$ then it is a countably compact space. Using Proposition \ref{CaratCountably} and Lemma \ref{SubspaceRetra} it follows that $Y$ is a non-commutative Corson countably compact space.

\item Let $\{X_{n}\}_{n\in\omega_{0}}$ be a countable family of non-commutative Corson countably compact spaces. For every $n\in\omega_0$, by Proposition \ref{CaratCountably}, $X_n$ is an induced subspace of the non-commutative Valdivia compact space $\beta X_n$. By Proposition \ref{ProdRetr} $\prod_{n\in\omega_0}\beta X_n$ is non-commutative Valdivia and $\prod_{n\in\omega_0} X_n $ is an induced subspace. Hence $\prod_{n\in\omega_0} X_n $ is a non-commutative Corson countably compact space.

\item Let $X_{1},...,X_{n}$ be a finite collection of non-commutative Corson countably compact spaces and define the topological sum $X=\bigoplus\limits_{k=1}^{n}X_{k}$. Using the countably compactness of every $X_{k}$ it is easy to prove that $X$ is countably compact.\\
It remains to prove that $X$ has full retractional skeleton. For every $k=1,...,n$ $X_{k}$ has a full retractional skeleton, then let $(\Gamma_{k},\leq_{k})$ be an up-directed partially ordered set and $\{r_{s}^{k}\}_{s\in\Gamma_{k}}$ be a full retractional skeleton on $X_{k}$. Now we define a family of retractions on $X$, let
\begin{equation*}
\Gamma=\{\gamma=(\gamma_{1},...,\gamma_{n})\in \Gamma_{1}\times...\times\Gamma_{n}\}
\end{equation*}
equipped with the following order: given $\gamma,\delta\in\Gamma$ we will say that $\gamma\leq \delta$ if and only if $\gamma_{k}\leq_{k} \delta_{k}$ for every $k=1,...,n$.\\
For every $\gamma \in \Gamma$ we define $r_{\gamma}:X\to X$ as follows: given $x\in X$ we have $x\in X_{k}$ for some $k$, then we put
\begin{equation*}
r_{\gamma}(x)=r_{\gamma_{k}}^{k}(x).
\end{equation*} 
Since $r_{\gamma}$ is continuous on every $X_{k}$, it is continuous on $X$. Moreover, since $\{r_{s}^{k}\}_{s\in\Gamma_{k}}$ is a full retractional skeleton on $X_k$ for every $k=1,...,n$, it is easy to check that $\{r_{\gamma}\}_{\gamma\in\Gamma}$ is a full retractional skeleton on $X$.

\item Let $\{X_{\alpha}\}_{\alpha\in A}$ be a family of non-commutative Corson countably compact spaces and $X=(\bigoplus\limits_{\alpha\in A}X_{\alpha})\cup\{\infty\}$ be a one-point countably compact modification of topological sum of them.\\
For every $\alpha\in A$ there exist an up-directed partially ordered set $(\Gamma_{\alpha},\preceq_{\alpha})$ and a full retractional skeleton $\{r_{s}^{\alpha}\}_{s\in\Gamma_{\alpha}}$ on $X_{\alpha}$.\\
We define $\Gamma^{'}_{\alpha}=\Gamma_{\alpha} \cup \{0\}$ and a relation $\leq_{\alpha}$ such that if we restrict $\leq_{\alpha}$ to $\Gamma_{\alpha}\times \Gamma_{\alpha}$ we have the same order of $(\Gamma_{\alpha},\preceq_{\alpha})$ and $0\leq_{\alpha} s$, for every $s\in \Gamma_{\alpha}$. This way $(\Gamma_{\alpha}^{'},\leq_{\alpha})$ is an up-directed partially ordered set, for every $\alpha\in A$.\\
Let
\begin{equation*}
\Gamma=\{\gamma\in \prod_{\alpha\in A}\Gamma_{\alpha}^{'}: |S(\gamma)|\leq\aleph_{0}  \}
\end{equation*}
where $S(\gamma)=\{\alpha\in A:\gamma(\alpha)\neq 0 \}$. Given $\gamma_{1},\gamma_{2}\in \Gamma$ we will say that $\gamma_{1}\leq \gamma_{2}$ if and only if $\gamma_{1}(\alpha)\leq_{\alpha} \gamma_{2}(\alpha)$ for every $\alpha\in A$. For every $\gamma \in \Gamma $ we define the retraction $r_{\gamma}:X\to X$ as follows:
\begin{itemize}
\item if $x=\infty$, $r_{\gamma}(x)=\infty$,
\item if $x\neq \infty$ there exists an $\alpha \in A$ such that $x\in X_{\alpha}$ then 
\begin{equation*}
r_{\gamma}(x)=\begin{cases}
r_{\gamma(\alpha)}^{\alpha}(x) &\mbox{ if } \gamma(\alpha)\neq 0,\\
\infty &\mbox{ if } \gamma(\alpha)= 0,
\end{cases}
\end{equation*}
\end{itemize}
\underline{Claim}: for every $\gamma\in \Gamma$ the retraction $r_{\gamma}$ is a continuous mapping.\\
Let $\gamma \in\Gamma$ we study the continuity of $r_{\gamma}$ at each point:
\begin{itemize}
\item if $x=\infty$, let $U$ be an open neighborhood of $x$ then $X\setminus U$ is a countably compact space. Moreover, since each $X_{\alpha}$ is open and $X\setminus U$ is a countably compact space, we have that the cardinality of $F=\{\alpha\in A: (X\setminus U)\cap X_{\alpha}\neq\emptyset\}$ is finite. Let $V=X\setminus \bigcup_{\alpha\in F}X_{\alpha}$, it is a neighborhood of $x$ and $V\subset U$. By definition of $r_{\gamma}$ we have $r_{\gamma}(V)\subset V$; hence we have the continuity in $x$.
\item if $x\neq \infty$, since each $X_{\alpha}$ is clopen and the restriction of $r_{\gamma}$ on each $X_{\alpha}$ is continuous, we conclude that $r_{\gamma}$ is continuous in $x$.
\end{itemize}
It proves the claim.\\
It remains to prove that $\{r_{\gamma}\}_{\gamma\in\Gamma}$ is a full retractional skeleton on $X$.
\begin{enumerate}[$(i)$]
\item Since $r_{\gamma}[X]=(\bigoplus\limits_{\alpha\in S(\gamma)}r_{\gamma(\alpha)}^{\alpha}[X_{\alpha}])\cup\{\infty\}$ is countably compact, regular and has a countable network we have that it is metrizable and compact.
\item Let $\gamma_{1}, \gamma_{2}\in \Gamma$ such that $\gamma_{1}\leq\gamma_{2}$. If $x=\infty$ it is trivial that  $r_{\gamma_{1}}(x)=r_{\gamma_{1}}\circ r_{\gamma_{2}}(x)=r_{\gamma_{2}}\circ r_{\gamma_{1}}(x)$. Then, suppose $x\in X_{\alpha}$ for some $\alpha\in A$, three cases are possible:
\begin{itemize}
\item $\gamma_{1}(\alpha)= 0$ and $\gamma_{2}(\alpha)= 0$ we have
\begin{equation*}
r_{\gamma_{1}}(x)=r_{\gamma_{1}}\circ r_{\gamma_{2}}(x)=r_{\gamma_{2}}\circ r_{\gamma_{1}}(x)=\infty.
\end{equation*}
\item $\gamma_{1}(\alpha)= 0$ and $\gamma_{2}(\alpha)\neq 0$ we have
\begin{equation*}
r_{\gamma_{1}}( r_{\gamma_{2}}(x))=r_{\gamma_{1}}( r_{\gamma_{2}(\alpha)}^{\alpha}(x))=\infty=r_{\gamma_{1}}(x)
\end{equation*}
and
\begin{equation*}
r_{\gamma_{2}}( r_{\gamma_{1}}(x))=r_{\gamma_{2}}( \infty)=\infty=r_{\gamma_{1}}(x)
\end{equation*}
\item $\gamma_{1}(\alpha)\neq 0$ and $\gamma_{2}(\alpha)\neq 0$ we have:
\begin{equation*}
r_{\gamma_{1}}( r_{\gamma_{2}}(x))=r_{\gamma_{1}(\alpha)}^{\alpha}( r_{\gamma_{2}(\alpha)}^{\alpha}(x))=r_{\gamma_{1}(\alpha)}^{\alpha}(x)=r_{\gamma_{1}}(x)
\end{equation*}
and
\begin{equation*}
r_{\gamma_{2}}( r_{\gamma_{1}}(x))=r_{\gamma_{2}(\alpha)}^{\alpha}( r_{\gamma_{1}(\alpha)}^{\alpha}(x))=r_{\gamma_{1}(\alpha)}^{\alpha}(x)=r_{\gamma_{1}}(x).
\end{equation*}
\end{itemize}
\item Let $\gamma_{1}\leq \gamma_{2}\leq ...$ and $\gamma=\sup_{n\in\omega_{0}}\gamma_{n}$, then for every $\alpha \in A$ we have $\gamma(\alpha)=\sup_{n\in\omega_{0}}\gamma_{n}(\alpha)$. Let $x\in X$, two cases are possible
\begin{itemize}
\item $x=\infty$: we have $r_{\gamma_{n}}(x)=\infty$ for every $n\in\omega_{0}$ then $\lim\limits_{n\in\omega_{0}}r_{\gamma_{n}}(x)=x$.
\item $x\neq \infty$: there exists $\alpha\in A$ such that $x\in X_{\alpha}$. If $\alpha\notin S(\gamma)$ then $\alpha\notin S(\gamma_{n})$ for each $n\in\omega_{0}$; hence $r_{\gamma}(x)=r_{\gamma_{n}}(x)=\infty$. If $\alpha\in S(\gamma)$, then there exists $n_{0}\in\omega_{0}$ such that for every $n\geq n_{0}$ we have $\alpha\in S(\gamma_{n})$; hence using the fact that the family $\{r_{s}^{\alpha}\}_{s\in\Gamma_{\alpha}}$ is a full retractional skeleton on $X_{\alpha}$ we have that $r_{\gamma}(x)=r_{\gamma(\alpha)}^{\alpha}(x)=\lim\limits_{n\in\omega_{0}}r_{\gamma_{n}(\alpha)}^{\alpha}(x)=\lim\limits_{n\in\omega_{0}}r_{\gamma_{n}}(x)$.
\end{itemize}
\item For every $\alpha \in A$ and $x\in X_{\alpha}$ there exists $s\in \Gamma_{\alpha}$ such that $r_{s}^{\alpha}(x)=x$ then for every $\gamma\in \Gamma$ such that $s\leq_{\alpha}\gamma(\alpha)$ we have $r_{\gamma}(x)=x$. Therefore $\lim_{\gamma\in\Gamma}r_{\gamma}(x)=x$. The case $x=\infty$ is trivial.
\end{enumerate}
Finally it is trivial that $X=\bigcup_{\gamma\in\Gamma}r_{\gamma}[X]$.
\item It follows immediately combining \cite[Theorem 1.1]{CuthKalenda} and \cite[Theorem 3.6]{RojasTkachuk}.
\end{enumerate}
This completes the proof.
\end{proof}

Using the same argument as in \cite[Lemma 2.2]{Kalenda1} it is possible to prove the following result about stability properties of weakly non-commutative Corson countably compact spaces.

\begin{lem}\label{OperzWnoncomm}
The class of weakly non-commutative Corson countably compact is closed under
\begin{enumerate}[(1)]
\item countably closed subspaces,
\item countable products,
\item continuous images,
\item finite unions,
\item finite topological sums,
\item one-point countably compact modifications of topological sums.
\end{enumerate}
\end{lem}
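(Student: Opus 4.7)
The plan is to transfer each closure property to the corresponding item of Lemma \ref{Operznoncomm} through the defining continuous surjections. For every weakly non-commutative Corson countably compact space $X$ I fix, by definition, a continuous surjection $f_X:Y_X\to X$ with $Y_X$ non-commutative Corson countably compact; the task in each case is to produce, from data on the $X$-side, a non-commutative Corson countably compact space on the $Y$-side together with a continuous surjection onto the target.

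Items $(2), (3), (4), (5)$ are essentially bookkeeping. For $(3)$, the composition of $f_X$ with a continuous surjection $g:X\to Z$ witnesses $Z$ as a continuous image of $Y_X$. For $(2)$, the product map $\prod_{n}f_{X_n}:\prod_{n}Y_{X_n}\to\prod_{n}X_n$ is a continuous surjection whose source is non-commutative Corson countably compact by Lemma \ref{Operznoncomm}$(2)$. For $(5)$, the coproduct map $\bigoplus_{k=1}^{n}Y_{X_k}\to\bigoplus_{k=1}^{n}X_k$ does the job together with Lemma \ref{Operznoncomm}$(3)$. For $(4)$, if $X=X_1\cup X_2$ with $X_1,X_2$ in the class, then $X$ is countably compact as a finite union of countably compact sets, and the map $Y_{X_1}\oplus Y_{X_2}\to X$ induced by $f_{X_1}$ and $f_{X_2}$ is a continuous surjection whose source is in the class again by Lemma \ref{Operznoncomm}$(3)$.

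Item $(1)$ rests on the small observation that the preimage of a countably closed set under a continuous map is countably closed: if $F\subset X$ is countably closed and $C\subset f_X^{-1}(F)$ is countable, then $f_X(C)$ is countable in $F$, so $\overline{f_X(C)}\subset F$; continuity gives $f_X(\overline{C})\subset\overline{f_X(C)}$, whence $\overline{C}\subset f_X^{-1}(F)$. Lemma \ref{Operznoncomm}$(1)$ then applies to $f_X^{-1}(F)$, and the restriction of $f_X$ is the desired continuous surjection onto $F$.

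The only case that is not purely formal is $(6)$, and I expect it to be the main obstacle. Given $X=(\bigoplus_{\alpha\in A}X_\alpha)\cup\{\infty\}$ and surjections $f_\alpha:Y_\alpha\to X_\alpha$, I would form $Y=(\bigoplus_{\alpha\in A}Y_\alpha)\cup\{\ast\}$, declaring each $Y_\alpha$ clopen and taking as basic neighborhoods of $\ast$ the sets $\{\ast\}\cup\bigcup_{\alpha\notin F_0}Y_\alpha$ with $F_0\subset A$ finite, and then extend the $f_\alpha$'s to $F:Y\to X$ by sending $\ast$ to $\infty$. The technical point is to verify that $Y$ is countably compact (so that it is genuinely a one-point countably compact modification in the sense of Definition \ref{def1pointmod}) and that $F$ is continuous at $\ast$. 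Both reduce, exactly as in the continuity argument in the proof of Lemma \ref{Operznoncomm}$(4)$, to the observation that any closed subset of $X$ missing $\infty$ meets only finitely many of the clopen pieces $X_\alpha$; this fact transports via $F^{-1}$ to the analogous property on $Y$, yielding countable compactness of $Y$ and continuity of $F$ simultaneously. Once this is established, Lemma \ref{Operznoncomm}$(4)$ gives that $Y$ is non-commutative Corson countably compact, and $F$ completes the argument.
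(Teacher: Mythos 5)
Your proposal is correct and follows essentially the same route as the paper, which omits the proof precisely because it is the argument of Kalenda's Lemma 2.2: reduce each item to the corresponding item of Lemma \ref{Operznoncomm} via the defining continuous surjections, exactly as you do. One small remark on item (6): the fact that closed subsets of $Y$ missing $\ast$ meet only finitely many pieces holds automatically from your choice of basic neighborhoods of $\ast$ (no ``transport via $F^{-1}$'' is needed, nor would it be legitimate before continuity of $F$ is established), and with that observation your verifications of countable compactness of $Y$ and of continuity of $F$ at $\ast$ go through as indicated.
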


\section{Weakly non-commutative Valdivia compact spaces}

Now we give the definition of weakly non-commutative Valdivia compact space which is a generalization of the commutative one introduced in $\cite{Kalenda1}$.

\begin{defn}
A compact space $K$ is said \textit{weakly non-commutative Valdivia compact} if it has a dense countably compact subspace which is weakly non-commutative Corson.
\end{defn}

Next two results are the non-commutative version of \cite[Proposition 3.1]{Kalenda1} and \cite[Lemma 1.17]{Kalenda2}.

\begin{prop}
A compact space $K$ is weakly non-commutative Valdivia if and only if it is a continuous image of a non-commutative Valdivia compact.
\end{prop}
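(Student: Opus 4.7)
The plan is to prove both implications by combining the Stone--\v{C}ech extension of continuous maps with the correspondence between retractional skeletons in compact spaces and full retractional skeletons in their induced subspaces (Proposition \ref{CaratCountably} together with Theorem \ref{PropRetr}).

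For the ``if'' direction, I would start with a continuous surjection $g: L \to K$ where $L$ is a non-commutative Valdivia compact with induced subspace $D \subset L$. The key observation is that $D$ is itself a non-commutative Corson countably compact space: countable compactness follows from Theorem \ref{PropRetr}$(i)$, since any countable $A \subset D$ has $\overline{A}$ metrizable compact and contained in $D$, forcing every infinite subset of $D$ to have an accumulation point in $D$; and the full retractional skeleton on $D$ is supplied by Proposition \ref{CaratCountably}, using Theorem \ref{PropRetr}$(iii)$ which identifies $L$ with $\beta D$. Then $g(D)$ is dense in $K$ (continuous image of a dense set), countably compact (continuous image of a countably compact space), and weakly non-commutative Corson by definition, witnessing that $K$ is weakly non-commutative Valdivia.

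For the ``only if'' direction, suppose $K$ admits a dense countably compact subspace $D$ that is weakly non-commutative Corson, so there is a continuous surjection $f: Y \to D$ with $Y$ non-commutative Corson countably compact. By Proposition \ref{CaratCountably}, $Y$ is an induced subspace of $\beta Y$, and $\beta Y$ is non-commutative Valdivia. Since $K$ is compact Hausdorff and $Y$ is Tychonoff, the map $f$ (viewed into $K$) extends to a continuous $\tilde{f}: \beta Y \to K$. The image $\tilde{f}(\beta Y)$ is a compact subset of $K$ containing $f(Y) = D$, and $D$ is dense in $K$, so $\tilde{f}$ is surjective. This exhibits $K$ as a continuous image of the non-commutative Valdivia compact $\beta Y$.

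The main technical point to watch is the first direction, where one must verify that the induced subspace $D$ of $L$ genuinely qualifies as non-commutative Corson countably compact in the sense of the paper --- this is not purely formal because the definition requires both countable compactness and the existence of a full retractional skeleton on $D$ itself, and both must be read off from the retractional skeleton of $L$ via Theorem \ref{PropRetr} and Proposition \ref{CaratCountably}. Once this is in place, the rest of the argument is just a standard Stone--\v{C}ech extension and a density-in-a-compact-Hausdorff space argument, with no further obstacles.
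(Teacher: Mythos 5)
Your proposal is correct and follows essentially the same route as the paper: pass to the induced subspace $D$ of $L$ (which is non-commutative Corson countably compact) for the ``if'' direction, and use $\beta Y$ together with the Stone--\v{C}ech extension of $f$ plus density of $D$ in $K$ for the ``only if'' direction. The extra care you take in justifying that $D$ is genuinely non-commutative Corson countably compact (via Theorem \ref{PropRetr} and Proposition \ref{CaratCountably}) is exactly the content the paper compresses into a single citation, so there is nothing to change.
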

\begin{proof}
We start by proving the \textquotedblleft if part\textquotedblright. Let $L$ be a non-commutative Valdivia compact space and $f:L\to K$ be a continuous onto mapping.\\
Let $D$ be an induced subspace of $L$, hence by Theorem \ref{PropRetr} it is a dense non-commutative Corson countably compact space. Since $f$ is a continuous mapping we have that $f(D)\subset K$ is a dense weakly non-commutative Corson countably compact space. Hence $K$ is a weakly non-commutative Valdivia compact space.\\
Conversely let $D$ be a dense weakly non-commutative Corson countably compact subspace of $K$. Then there exist a non-commutative Corson countably compact space $A$ and a continuous surjection $f:A\to D$. By Proposition \ref{CaratCountably} we have that $\beta A$ is a non-commutative Valdivia compact space. Let $\beta f:\beta A\to K$ be the continuous extension of $f$.\\
Since $D$ is dense in $K$, $D\subset \beta f(\beta A)$ and $\beta f(\beta A)$ is closed we have that $\beta f$ is a surjection. Thus $K$ is a continuous onto image of a non-commutative Valdivia compact space.
\end{proof}

We prefer to omit the proof of the following result because it is completely analogous to \cite[Lemma 1.17]{Kalenda2}.

\begin{prop}\label{compattinonwncV}
Let $K$ be a compact space with a countable dense set of $G_{\delta}$ points. If $K$ is a continuous image of a non-commutative Valdivia compact space then $K$ is metrizable.
\end{prop}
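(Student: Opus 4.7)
The plan is to mimic the classical argument from Kalenda's survey in the non-commutative setting. Fix a continuous surjection $f:L\to K$ with $L$ a non-commutative Valdivia compact space, fix an induced subspace $D\subset L$ coming from a retractional skeleton, and let $\{x_n\}_{n\in\omega_0}$ be the countable dense set of $G_\delta$ points of $K$. The goal is to produce a single countable subset $A\subset D$ whose image $f(\overline{A})$ is a closed metrizable subspace of $K$ containing $\{x_n:n\in\omega_0\}$; this will immediately force $f(\overline{A})=K$.

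The key technical step, which I expect to be the main obstacle, is showing that the induced subspace $D$ meets every non-empty $G_\delta$ subset of $L$. I would prove this as follows: given such a $G_\delta$ set $G$, by compactness and normality write $G=\bigcap_{k\in\omega_0}U_k$ with $U_k$ open and $\overline{U_{k+1}}\subset U_k$; using density of $D$ from Theorem \ref{PropRetr}(i) select $d_k\in U_k\cap D$. The same theorem yields that $\overline{\{d_k:k\in\omega_0\}}\subset D$ and is metrizable compact, so the sequence $\{d_k\}$ admits a cluster point $y\in D$. Since $d_j\in U_k$ for every $j\geq k$, that cluster point lies in $\overline{U_k}$ for every $k$, hence $y\in\bigcap_k\overline{U_k}=G$, giving $y\in G\cap D$. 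This is where the non-commutative setting differs from the commutative one: instead of invoking the classical $\Sigma$-product property, one uses the countable closedness of $D$ combined with the metrizability of countable closures, both packaged inside Theorem \ref{PropRetr}(i).

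For each $n\in\omega_0$, the preimage $f^{-1}(x_n)$ is a non-empty $G_\delta$ subset of $L$, so the claim produces $y_n\in D\cap f^{-1}(x_n)$. Set $A=\{y_n:n\in\omega_0\}$. Again by Theorem \ref{PropRetr}(i), $\overline{A}$ is a metrizable compact subspace contained in $D$. Its continuous image $f(\overline{A})$ is then a compact Hausdorff space of countable network weight, hence metrizable. Finally $f(\overline{A})$ is closed in $K$ and contains the dense set $\{x_n:n\in\omega_0\}$, so $f(\overline{A})=K$, and $K$ is metrizable.
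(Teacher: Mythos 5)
Your proposal is correct and follows essentially the intended route: the paper omits this proof as ``completely analogous to \cite[Lemma 1.17]{Kalenda2}'', and your argument is precisely that adaptation --- pick points of $D$ in the fibres $f^{-1}(x_n)$, use Theorem \ref{PropRetr}$(i)$ to get a metrizable compact $\overline{A}\subset D$, and push it forward --- with the key step (that the induced set $D$ meets every non-empty closed $G_{\delta}$ set) proved directly from countable closedness and metrizability of countable closures rather than quoted from \cite[Lemma 3.3]{Cuth1}, which the paper uses elsewhere for exactly this purpose. One small imprecision: a general $G_{\delta}$ set cannot be written as $\bigcap_k U_k$ with $\overline{U_{k+1}}\subset U_k$ (such an intersection is closed), so either fix a point of $G$ and build the nested $U_k$ inside the given open sets, or note that the sets you actually need, the fibres $f^{-1}(x_n)$, are closed $G_{\delta}$ sets, for which your representation and the cluster-point argument work verbatim.
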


\begin{cor}\label{compattinonwncCcc}
Let $K$ be a compact space with a countable dense set of $G_{\delta}$ points. If $K$ is a continuous image of a non-commutative Corson countably compact space then $K$ is metrizable.
\end{cor}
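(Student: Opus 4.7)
The plan is to reduce the corollary to the previous proposition by passing to the \v{C}ech-Stone compactification. Suppose $K$ has a countable dense set of $G_{\delta}$ points and admits a continuous surjection $f: X \to K$ with $X$ a non-commutative Corson countably compact space. Since $X$ is countably compact and has a full retractional skeleton, Proposition \ref{CaratCountably} gives that $X$ is induced by a retractional skeleton on $\beta X$, so $\beta X$ is a non-commutative Valdivia compact space.

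Next, since $K$ is compact (hence Hausdorff and completely regular), the continuous map $f: X \to K$ extends uniquely to a continuous map $\beta f: \beta X \to K$. The image $\beta f(\beta X)$ is a compact, and therefore closed, subset of $K$ containing $f(X) = K$, so $\beta f$ is a continuous surjection from the non-commutative Valdivia compact space $\beta X$ onto $K$.

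Finally, I would invoke Proposition \ref{compattinonwncV} applied to the surjection $\beta f: \beta X \to K$ to conclude that $K$ is metrizable. There is no real obstacle here; the argument is the same bootstrap already used in the preceding proposition, reflecting the fact that the class of continuous images of non-commutative Valdivia compacta contains the class of continuous images of non-commutative Corson countably compacta via the operation $X \mapsto \beta X$.
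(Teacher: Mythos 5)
Your argument is correct and is essentially the paper's (implicit) route: the corollary is meant to follow from Proposition~\ref{compattinonwncV} by exactly the bootstrap you describe, namely passing from $X$ to $\beta X$ via Proposition~\ref{CaratCountably} and extending $f$ to a surjection $\beta f:\beta X\to K$, which is the same extension argument the paper already uses in the proof that weakly non-commutative Valdivia compacta are continuous images of non-commutative Valdivia compacta. No gaps.
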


Using Proposition \ref{compattinonwncV}, Corollary \ref{compattinonwncCcc} and \cite[Example 1.18]{Kalenda2} we have some examples of compact spaces which are neither weakly non-commutative Corson nor weakly non-commutative Valdivia.

\begin{prop}\label{1pointcpt}
Let $\{K_{\alpha}\}_{\alpha\in A}$ be a family of non-commutative Valdivia compact spaces. Then the one-point compactification of $K=\bigoplus\limits_{\alpha\in A}K_{\alpha}$ is a non-commutative Valdivia compact space.
\end{prop}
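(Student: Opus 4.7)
The plan is to construct a retractional skeleton on the one-point compactification $L=K\cup\{\infty\}$ directly, following the template of part $(4)$ of Lemma \ref{Operznoncomm}. For each $\alpha\in A$ I fix a retractional skeleton $\{r^{\alpha}_{s}\}_{s\in\Gamma_{\alpha}}$ on $K_{\alpha}$, adjoin a new minimum $0$ to obtain $\Gamma_{\alpha}^{'}=\Gamma_{\alpha}\cup\{0\}$, and set
\begin{equation*}
\Gamma=\Big\{\gamma\in\prod_{\alpha\in A}\Gamma_{\alpha}^{'}:\ |S(\gamma)|\leq\aleph_{0}\Big\},\qquad S(\gamma)=\{\alpha\in A:\gamma(\alpha)\neq 0\},
\end{equation*}
partially ordered coordinatewise; this is up-directed. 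I then define $r_{\gamma}:L\to L$ by $r_{\gamma}(\infty)=\infty$ and, for $x\in K_{\alpha}$, $r_{\gamma}(x)=r^{\alpha}_{\gamma(\alpha)}(x)$ when $\gamma(\alpha)\neq 0$ and $r_{\gamma}(x)=\infty$ otherwise.

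The main obstacle is the continuity of each $r_{\gamma}$ at $\infty$; this is where the genuine compactness of $L$ (as opposed to mere countable compactness, as in Lemma \ref{Operznoncomm}(4)) is exploited. Given a neighbourhood $V$ of $\infty$, the complement $L\setminus V$ is compact in $K=\bigoplus_{\alpha}K_{\alpha}$, and since the summands are clopen it meets only finitely many, say $K_{\alpha_{1}},\dots,K_{\alpha_{n}}$. Then $W=L\setminus\bigcup_{i=1}^{n}K_{\alpha_{i}}$ is a neighbourhood of $\infty$, and for every $x\in W$ either $x=\infty$ (so $r_{\gamma}(x)=\infty\in V$) or $x\in K_{\alpha}$ with $\alpha\notin\{\alpha_{1},\dots,\alpha_{n}\}$, in which case $K_{\alpha}\subset V$ and $r_{\gamma}(x)\in K_{\alpha}\cup\{\infty\}\subset V$. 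Continuity at the other points is automatic, because on each open $K_{\alpha}$ the map $r_{\gamma}$ restricts either to the continuous retraction $r^{\alpha}_{\gamma(\alpha)}$ or to the constant $\infty$.

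The remaining skeletal axioms mirror the computation in Lemma \ref{Operznoncomm}(4) almost verbatim, so I would only sketch them. For $(i)$, the image $r_{\gamma}[L]=\{\infty\}\cup\bigcup_{\alpha\in S(\gamma)}r^{\alpha}_{\gamma(\alpha)}[K_{\alpha}]$ is the one-point compactification of a countable disjoint sum of metrizable compacta, hence metrizable. For $(ii)$, the compatibility identities split into cases according to whether $\gamma_{1}(\alpha)$ and $\gamma_{2}(\alpha)$ vanish; each case is either trivial or reduces to the corresponding identity for $\{r^{\alpha}_{s}\}_{s\in\Gamma_{\alpha}}$. For $(iii)$, a countable union of countable supports is countable, so the coordinatewise supremum lies in $\Gamma$; the convergence is then verified coordinatewise against axiom $(iii)$ on each $K_{\alpha}$. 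For $(iv)$, given $x\in K_{\alpha}$ and a neighbourhood of $x$, I pick $s_{0}\in\Gamma_{\alpha}$ witnessing $r^{\alpha}_{s}(x)\to x$ and let $\gamma_{0}\in\Gamma$ assign $s_{0}$ at $\alpha$ and $0$ elsewhere; any $\gamma\geq\gamma_{0}$ then has $\gamma(\alpha)\geq s_{0}$, forcing $r_{\gamma}(x)=r^{\alpha}_{\gamma(\alpha)}(x)$ into the neighbourhood. The case $x=\infty$ is trivial, concluding that $\{r_{\gamma}\}_{\gamma\in\Gamma}$ is a retractional skeleton on $L$.
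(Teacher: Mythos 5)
Your proposal is correct and follows essentially the same route as the paper, which likewise reuses the index set $\Gamma$ and the retractions $r_{\gamma}$ from point $(4)$ of Lemma \ref{Operznoncomm}, obtains continuity at $\infty$ from the clopenness of the summands and the compactness of complements of neighbourhoods of $\infty$, and replaces the full-skeleton fixed-point argument in axiom $(iv)$ by the net convergence $\lim_{s\in\Gamma_{\alpha}}r^{\alpha}_{s}(x)=x$ exactly as you do. No gaps to report.
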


We will not provide the full proof because we use the same idea of Lemma \ref{Operznoncomm}, point (4).

\begin{proof}
We will use the same notations of point $(4)$ Lemma \ref{Operznoncomm}. We define the up-directed partially ordered set $\Gamma$ and the family of retractions $\{r_{\gamma}\}_{\gamma\in\Gamma}$ as well. Moreover, we observe that each $K_{\alpha}$ is clopen in the one-point compactification of $K$, hence the continuity of every $r_{\gamma}$ follows in the same way of point $(4)$ Lemma \ref{Operznoncomm}.\\
It remains to prove that $\{r_{\gamma}\}_{\gamma\in\Gamma}$ is a retractional skeleton. Points $(i),(ii)$ and $(iii)$ follow as in point $(4)$ Lemma \ref{Operznoncomm}.\\
$(iv)$ If $x=\infty$, $r_{\gamma}(x)=\infty$ for every $\gamma \in\Gamma$, hence it is clear that $\lim\limits_{\gamma\in\Gamma}r_{\gamma}(x)=x$. Suppose otherwise $x\in K_{\alpha}$ for some $\alpha\in A$, by definition of $(\Gamma,\leq)$ there exists $\gamma_{0}\in\Gamma$ such that $\gamma(\alpha)\neq 0$ for every $\gamma\geq \gamma_{0}$, hence we have $r_{\gamma}(x)=r_{\gamma(\alpha)}^{\alpha}(x)$ for such $\gamma$. Since $\{r_{s}^{\alpha}\}_{s\in\Gamma_{\alpha}}$ is a retractional skeleton on $K_{\alpha}$, we deduce that $\lim\limits_{\gamma\in\Gamma}r_{\gamma}(x)=\lim\limits_{s\in\Gamma_{\alpha}}r_{s}^{\alpha}(x)=x$. 
\end{proof}

\begin{prop}\label{operazWNVald}
The class of weakly non-commutative Valdivia compact spaces is closed under
\begin{enumerate}[(1)]
\item arbitrary products,
\item one point compactifications of arbitrary topological sums,
\item continuous images,
\item finite unions.
\end{enumerate}
Moreover if $K$ is weakly non-commutative Valdivia compact and $L$ is a subset of $K$ which can be written as the closure of the union of an arbitrary family of $G_{\delta}$ subsets of $K$, then $L$ is weakly non-commutative Valdivia as well.
\end{prop}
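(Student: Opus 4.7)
The backbone of my plan is the equivalence established in the previous proposition: $K$ is weakly non-commutative Valdivia if and only if $K$ is a continuous image of a non-commutative Valdivia compact space. For each of the five assertions, starting from witnesses $\phi_\alpha\colon L_\alpha\to K_\alpha$ with $L_\alpha$ non-commutative Valdivia, I would produce an explicit non-commutative Valdivia compact that continuously surjects onto the target space, and then invoke this characterization.

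Items (1)--(4) are essentially direct. For (1), the product $\prod_\alpha L_\alpha$ is non-commutative Valdivia by Proposition \ref{ProdRetr}, and the product map surjects onto $\prod_\alpha K_\alpha$. For (2), Proposition \ref{1pointcpt} yields that the one-point compactification of $\bigoplus_\alpha L_\alpha$ is non-commutative Valdivia; extending $\bigoplus_\alpha\phi_\alpha$ by $\infty\mapsto\infty$ produces a continuous surjection onto the one-point compactification of $\bigoplus_\alpha K_\alpha$, and its continuity at $\infty$ is immediate from the fact that a neighborhood of $\infty$ in the codomain omits only finitely many summands. Item (3) is a triviality once one composes continuous surjections. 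For (4), I would first note that finite topological sums of non-commutative Valdivia compacta are non-commutative Valdivia by copying the construction of Lemma \ref{Operznoncomm}(3) verbatim: index the skeleton by $\Gamma_1\times\cdots\times\Gamma_n$ and let $r_\gamma$ act by $r^k_{\gamma_k}$ on the clopen summand $L_k$. A finite union $K_1\cup\cdots\cup K_n$ is then the image of $L_1\oplus\cdots\oplus L_n$ under the obvious surjection.

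The moreover clause is the main obstacle, and for it I would isolate the following key lemma: \emph{if $M$ is a compact space with a retractional skeleton $\{r_s\}_{s\in\Gamma}$ inducing $D$, then $G\cap D$ is dense in $G$ for every $G_\delta$ subset $G\subseteq M$.} To prove it, write $G=\bigcap_n U_n$ with $U_n$ open, fix $x\in G$ and an open neighborhood $V$ of $x$; since $r_s(x)\to x$, pick $s_V,s_1,s_2,\dots\in\Gamma$ so that $r_s(x)\in V$ for $s\geq s_V$ and $r_s(x)\in U_n$ for $s\geq s_n$. By up-directedness construct a chain $\sigma_1\leq\sigma_2\leq\cdots$ with $\sigma_n\geq s_V,s_1,\dots,s_n$, and set $\sigma=\sup_n\sigma_n$, which exists by axiom (iii) of a retractional skeleton. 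Since $\sigma\geq s_V$ and $\sigma\geq s_n$ for every $n$, the point $r_\sigma(x)$ lies in $V\cap G\cap D$, hence $x\in\overline{G\cap D}$.

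Granted the key lemma, the last claim is concluded as follows. Pick a witnessing surjection $\phi\colon M\to K$ with $M$ non-commutative Valdivia and induced set $D$. Each preimage $\phi^{-1}(G_\lambda)$ is a $G_\delta$ subset of $M$, so by the lemma $D\cap\phi^{-1}(G_\lambda)$ is dense in $\phi^{-1}(G_\lambda)$; consequently $D\cap M'$ is dense in $M':=\overline{\bigcup_\lambda\phi^{-1}(G_\lambda)}$. Lemma \ref{SubspaceRetra} then yields that $M'$ is non-commutative Valdivia. Finally $\phi(M')$ is closed, contains $\bigcup_\lambda G_\lambda$, and is contained in $L$, so $\phi(M')=L$, exhibiting $L$ as a continuous image of the non-commutative Valdivia compact $M'$.
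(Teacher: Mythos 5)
Your proposal is correct. Items (1)--(4) are handled exactly as in the paper (product of witnesses plus Proposition \ref{ProdRetr}, Proposition \ref{1pointcpt} with the map extended by $\infty\mapsto\infty$, composition, and the sum-of-witnesses trick for finite unions). For the \emph{moreover} clause, however, you take a genuinely different route. The paper never lifts anything: it takes the dense weakly non-commutative Corson countably compact subspace $D\subset K$ given by the definition, notes that $L\cap D\supset\bigcup_\beta(Z_\beta\cap D)$, and gets $\overline{L\cap D}\supset\bigcup_\beta\overline{Z_\beta\cap D}\supset\bigcup_\beta Z_\beta$ by citing \cite[Lemma 3.3]{Cuth1} (the fact that a dense countably compact subset $D$ satisfies $Z\subset\overline{Z\cap D}$ for every $G_\delta$ set $Z$); since $L\cap D$ is closed in $D$, it is weakly non-commutative Corson by Lemma \ref{OperzWnoncomm}, and $L$ is weakly non-commutative Valdivia directly from the definition. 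You instead pass to a witnessing surjection $\phi\colon M\to K$ with $M$ non-commutative Valdivia, pull the $G_\delta$ sets back, prove the density of $D\cap G$ in $G$ from the skeleton axioms (your argument is correct --- the chain $\sigma_1\leq\sigma_2\leq\cdots$ and $\sigma=\sup_n\sigma_n$ do the job, and $r_\sigma[M]\subset D$), apply Lemma \ref{SubspaceRetra} to $M'=\overline{\bigcup_\lambda\phi^{-1}(G_\lambda)}$, and push forward, using $\phi(M')$ closed to conclude $\phi(M')=L$. What each approach buys: the paper's argument is shorter, stays at the level of the definition, and only uses countable compactness of $D$ (no skeleton needed downstairs); yours is self-contained in that it reproves the $G_\delta$-density lemma instead of citing it, and it exhibits $L$ explicitly as a continuous image of a concrete non-commutative Valdivia compact $M'$, at the cost of the lifting step and of invoking Lemma \ref{SubspaceRetra}. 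Note also that your key lemma holds for any dense countably compact subset, so applying it directly to $D\subset K$ would let you skip the lift entirely and recover the paper's proof.
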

\begin{proof}
\begin{enumerate}[(1)]
\item Let $\{K_{\alpha}\}_{\alpha\in A}$ be a family of weakly non-commutative Valdivia compact spaces. Then there exists a continuous onto mapping $f_{\alpha}:L_{\alpha}\to K_{\alpha}$, where $L_{\alpha}$ is a non-commutative Valdivia compact space for every $\alpha\in A$.\\
We define $K=\prod\limits_{\alpha\in A}K_{\alpha}$ and $L=\prod\limits_{\alpha\in A}L_{\alpha}$. By Proposition \ref{ProdRetr} $L$ is a non-commutative Valdivia compact space. Finally let $f:L\to K$ defined by $f(y)(\alpha)=f_{\alpha}(y(\alpha))$, it is clearly onto and moreover, since it is coordinatewise continuous, it is continuous.
\item It follows, using Proposition \ref{1pointcpt}, by the same argument of the previous point.\\
\end{enumerate}
\noindent
Points $(3)$-$(4)$ are trivial.
Finally let $K$ be a weakly non-commutative Valdivia compact and $L\subseteq K$ such that $L=\overline{\bigcup_{\beta \in B}Z_{\beta}}$, where $B$ is a set and $Z_{\beta}$ is a $G_{\delta}$ subset of $K$, for every $\beta\in B$. Let $D\subset K$ be a dense weakly non-commutative Corson countably compact subspace. Now we want to prove that $L\cap D$ is dense in $L$. In fact 
\begin{equation*}
L\cap D=\overline{\bigcup_{\beta \in B}Z_{\beta}}\cap D\supset (\bigcup_{\beta \in B}Z_{\beta})\cap D= \bigcup_{\beta \in B}(Z_{\beta}\cap D).
\end{equation*} 
Taking the closure
\begin{equation*}
\overline{L\cap D}\supset \overline{\bigcup_{\beta \in B}(Z_{\beta}\cap D)}\supset \bigcup_{\beta \in B}\overline{(Z_{\beta}\cap D)}\supset\bigcup_{\beta \in B}Z_{\beta}.
\end{equation*} 
In the last part we have used \cite[Lemma 3.3]{Cuth1}. Therefore we have $L\supset \overline{L\cap D}\supset L$.
Moreover $L\cap D$ is a closed subspace of $D$, hence it is weakly non-commutative Corson, therefore $L$ is weakly non-commutative Valdivia.
\end{proof}

Now we want to use the previous results to prove the equivalent of \cite[Theorem 3.6]{Kalenda1} in the non-commutative case.

\begin{thm}\label{TeoremaMisurediradon}
Let $K$ be a compact space. Consider the following assertions.
\begin{enumerate}[(1)]
\item $K$ is weakly non-commutative Valdivia.
\item $(B_{C(K,\C)^{*}},w^{*})$ is weakly non-commutative Valdivia.
\item $(B_{C(K,\R)^{*}},w^{*})$ is weakly non-commutative Valdivia.
\item $P(K)$ is weakly non-commutative Valdivia.
\end{enumerate}
Then $1\Rightarrow 2\Leftrightarrow 3 \Leftrightarrow 4$. If $K$ has a dense set of $G_{\delta}$ points, then all three assertions are equivalent.
\end{thm}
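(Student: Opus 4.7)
The overall strategy is to prove $(1)\Rightarrow (2)$ by dualizing a continuous surjection from a non-commutative Valdivia compactum; to realize $P(K)$ as a closed $G_{\delta}$-subset of both $B_{C(K,\R)^{*}}$ and $B_{C(K,\C)^{*}}$ (so that the last clause of Proposition \ref{operazWNVald} gives $(2)\Rightarrow (4)$ and $(3)\Rightarrow (4)$); to recover the implications $(2)\Rightarrow (3)$ and $(4)\Rightarrow (3)$ via explicit $w^{*}$-continuous surjections coming from the real-part map and the Jordan decomposition; to close the chain through the polar-decomposition surjection $P(K\times S^{1})\to B_{C(K,\C)^{*}}$; and finally to obtain the converse $(4)\Rightarrow (1)$ under the $G_{\delta}$-density hypothesis by realizing $K\subset P(K)$ as a closure of a union of $G_{\delta}$ subsets.

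For $(1)\Rightarrow (2)$: by the characterization of weakly non-commutative Valdivia compacta given at the start of Section 3, $K$ is a continuous image of a non-commutative Valdivia compact $L$ via some surjection $\pi\colon L\to K$. The isometric embedding $C(K,\C)\hookrightarrow C(L,\C)$, $f\mapsto f\circ\pi$, has a $w^{*}$-continuous adjoint $\pi^{*}\colon B_{C(L,\C)^{*}}\to B_{C(K,\C)^{*}}$, surjective by Hahn--Banach extension. Assuming the non-commutative analogue of the Argyros--Mercourakis theorem (that $B_{C(L,\C)^{*}}$ is non-commutative Valdivia whenever $L$ is), which should be available from the projectional skeleton literature \cite{Kubis1,Cuth1}, Proposition \ref{operazWNVald}(3) delivers $(2)$.

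For the chain $(2)\Leftrightarrow (3)\Leftrightarrow (4)$, the pivotal observation is the identity
\begin{equation*}
P(K)=\{\mu\in B_{C(K,\R)^{*}}\colon \mu(1)=1\}=\{\mu\in B_{C(K,\C)^{*}}\colon \mu(1)=1\},
\end{equation*}
which follows from the fact that $\|\mu\|\leq 1$ and $\mu(1)=1$ force $1=|\mu(1)|\leq\|\mu\|\leq 1$, so $\|\mu\|=\mu(1)$, and by Jordan decomposition (real case) or polar decomposition (complex case) this forces $\mu\in P(K)$. Since $\mu\mapsto \mu(1)$ is $w^{*}$-continuous with values in the scalar field and $\{1\}$ is a $G_{\delta}$ there, $P(K)$ is a closed $G_{\delta}$-subset of either unit ball, and the last part of Proposition \ref{operazWNVald} yields $(2)\Rightarrow (4)$ and $(3)\Rightarrow (4)$. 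For the reverse halves: the $w^{*}$-continuous real-part map $\mathrm{Re}\colon B_{C(K,\C)^{*}}\to B_{C(K,\R)^{*}}$ (splitting the real inclusion) gives $(2)\Rightarrow (3)$, and the Jordan-decomposition surjection $P(K)\times P(K)\times\Delta\to B_{C(K,\R)^{*}}$, $(\nu_{+},\nu_{-},a,b)\mapsto a\nu_{+}-b\nu_{-}$ with $\Delta=\{(a,b)\in [0,1]^{2}\colon a+b\leq 1\}$ a metrizable simplex, gives $(4)\Rightarrow (3)$; both follow from Proposition \ref{operazWNVald}(1)--(3).

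The remaining step $(4)\Rightarrow (2)$ that closes the circle is the main technical obstacle. My candidate route is the $w^{*}$-continuous surjection $\Phi\colon P(K\times S^{1})\to B_{C(K,\C)^{*}}$, $\Phi(\nu)(f)=\int_{K\times S^{1}}f(x)z\, d\nu(x,z)$, whose surjectivity follows from the polar decomposition of complex Radon measures; the difficulty is lifting the weakly non-commutative Valdivia property from $P(K)$ to $P(K\times S^{1})$, which I expect to require the metrizability of $S^{1}$ (via separability of $C(S^{1},\R)$ and a Fourier-coefficient embedding of $B_{C(K\times S^{1},\R)^{*}}$ into a countable product of copies of $B_{C(K,\R)^{*}}$), after which $P(K\times S^{1})$ is recovered as a closed $G_{\delta}$-subset. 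Finally, for $(4)\Rightarrow (1)$ under the $G_{\delta}$-density hypothesis, embed $K\hookrightarrow P(K)$ by $x\mapsto \delta_{x}$: for every $G_{\delta}$-point $x$ of $K$, writing $\{x\}=\bigcap_{n}U_{n}$ and picking Urysohn functions $f_{n}\in C(K,\R)$ with $f_{n}(x)=1$ and $f_{n}|_{K\setminus U_{n}}=0$, one has $\{\delta_{x}\}=\bigcap_{n}\{\mu\in P(K)\colon \mu(f_{n})=1\}$, a $G_{\delta}$-subset of $P(K)$. The density of the $G_{\delta}$-points of $K$ then exhibits $K\cong\{\delta_{x}: x\in K\}$ as the closure of a union of $G_{\delta}$ subsets of $P(K)$, so the last part of Proposition \ref{operazWNVald} concludes $(1)$.
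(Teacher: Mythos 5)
Most of your steps are sound and, in places, more explicit than the paper: your $(1)\Rightarrow(2)$ is exactly the paper's argument (the ``non-commutative Argyros--Mercourakis'' fact you assume is precisely \cite[Proposition 28]{Kubis1}, which gives a $1$-projectional skeleton on $C(L,\C)$ and hence makes $(B_{C(L,\C)^{*}},w^{*})$ non-commutative Valdivia); the real-part map for $(2)\Rightarrow(3)$ is the paper's; and your realizations of $P(K)$ as a closed $G_{\delta}$ in either ball, of $\{\delta_{x}\}$ as a $G_{\delta}$ of $P(K)$ for a $G_{\delta}$-point $x$, and your Jordan-decomposition surjection $P(K)\times P(K)\times\Delta\to B_{C(K,\R)^{*}}$ are all correct and cover implications the paper leaves to the commutative reference. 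The genuine gap is the step you yourself flag: $(4)\Rightarrow(2)$.

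Your candidate route needs $P(K\times S^{1})$ to be weakly non-commutative Valdivia, but hypothesis $(4)$ is about $P(K)$, not about $K$; since $(4)$ is not claimed to imply $(1)$, nothing in your chain controls $K\times S^{1}$ itself. The Fourier-coefficient device does not repair this: it only embeds $P(K\times S^{1})$ as a closed subset of a countable product of copies of $B_{C(K,\R)^{*}}$, and the class of weakly non-commutative Valdivia compacta is not stable under arbitrary closed subspaces -- to invoke the last clause of Proposition \ref{operazWNVald} you would have to show that the image of this embedding is (a closure of a union of) $G_{\delta}$ subsets of a non-metrizable product, whereas that image is cut out by uncountably many positive-definiteness/compatibility conditions and there is no reason for it to be $G_{\delta}$. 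The paper sidesteps the problem by building everything on $P(K)$ rather than on $K$: by $(4)$ and stability under products, $P(K)\times S^{1}$ is weakly non-commutative Valdivia; applying the already established implication ``$M$ weakly non-commutative Valdivia $\Rightarrow$ $P(M)$ weakly non-commutative Valdivia'' (i.e.\ $(1)\Rightarrow(2)\Rightarrow(4)$ for the compact space $M=P(K)\times S^{1}$) gives that $P(P(K)\times S^{1})$ is weakly non-commutative Valdivia; finally, since the canonical image of $P(K)\times S^{1}$ in $B_{C(K,\C)^{*}}$ under $(\mu,\alpha)\mapsto\alpha\mu$ contains all extreme points of the ball, the barycenter map $r\colon P(B_{C(K,\C)^{*}})\to B_{C(K,\C)^{*}}$ restricted to $P(P(K)\times S^{1})$ is still surjective by \cite[Proposition 2.38 and Theorem 2.31]{LukesMalyNetSpu}, and stability under continuous images concludes. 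Your map $\Phi$ is the right kind of map (its image is convex, compact and contains the extreme points $\alpha\delta_{x}$, which is a cleaner surjectivity argument than polar decomposition), but it must be run on $P(P(K)\times S^{1})$, not on $P(K\times S^{1})$, and then the Choquet-type representation replaces your metrizability/Fourier argument entirely.
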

\begin{proof}
$(1)\Rightarrow(2)$ Let $K$ be a weakly non-commutative Valdivia compact space, then it is a continuous image of a non-commutative Valdivia compact $L$. Using \cite[Proposition 28]{Kubis1} $C(L,\C)$ has 1-projectional skeleton then it is clear that $(B_{C(L,\C)^*},w^*)$ is non-commutative Valdivia. Hence, since $(B_{C(K,\C)^*},w^*)$ is a continuous image of $(B_{C(L,\C)^*},w^*)$, it is a weakly non-commutative Valdivia compact space.\\
$(2)\Rightarrow(3)$ Suppose that $(B_{C(K,\C)^*},w^*)$ is a weakly non-commutative Valdivia compact space.\\
We want to show that $(B_{C(K,\R)^*},w^*)$ is a continuous images of $(B_{C(K,\C)^*},w^*)$. To do that, consider the following map:
\begin{equation*}
\begin{split}
\varphi:(B_{C(K,\C)^*},w&^*) \to (B_{C(K,\R)^*},w^*)\\
&\mu\mapsto \mbox{Re}(\mu).
\end{split}
\end{equation*}
It is clear that it is a surjection. To prove that $\varphi$ is a continuous mapping, it is sufficient to observe that for every $f\in C(K,\R)$ we have Re$(\mu)(f)=$Re$\mu(f)$. \\
$(4)\Rightarrow(2)$ Suppose $P(K)$ is weakly non-commutative Valdivia. By Proposition \ref{operazWNVald} the space $P(K)\times S^1$ is weakly non-commutative Valdivia, finally by $(1)\Rightarrow(4)$ $P(P(K)\times S^1)$ is weakly non-commutative Valdivia. By \cite[Proposition 2.38]{LukesMalyNetSpu} the barycenter mapping 
\begin{equation*}
r:P(B_{C(K,\C)^*})\to (B_{C(K,\C)^{*}},w^*)
\end{equation*}
is surjective and continuous. Moreover, since $P(K)\times S^1$ contains all the extreme points of $(B_{C(K,\C)^{*}},w^*)$, using \cite[Theorem 2.31]{LukesMalyNetSpu} we obtain that the restriction of $r$ to $P(P(K)\times S^1)$ is surjective as well. Hence, since $P(P(K)\times S^1)$ is weakly non-commutative Valdivia, we have that $(B_{C(K,\C)^{*}},w^*)$ is weakly non-commutative Valdivia, too.\\
$(4)\Rightarrow(1)$ If $K$ has a dense set of $G_{\delta}$ points, it follows in the same way as \cite[Theorem 3.6]{Kalenda1}.
\end{proof}

Now we give the non-commutative version of \cite[Theorem 3.7]{Kalenda1}, we recall the definition of property $(M)$.

\begin{defn}
A Compact space $K$ is said to have the \textit{property $(M)$} if every Radon probability measure on $K$ has separable support.
\end{defn}
\begin{thm}\label{TeoPropM}
Let $K$ be a weakly non-commutative Corson compact space with property $(M)$, then $(B_{C(K)^{*}},w^{*})$ is weakly non-commutative Corson as well.
\end{thm}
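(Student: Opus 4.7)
The plan is to construct a retractional skeleton on $(B_{C(L)^{*}},w^{*})$ by pushforward and then to use property $(M)$ to lift every element of $(B_{C(K)^{*}},w^{*})$ into its induced subspace. Fix a continuous surjection $f\colon Y\to K$ with $Y$ non-commutative Corson countably compact; by Proposition \ref{CaratCountably}, $L:=\beta Y$ is non-commutative Valdivia and carries a retractional skeleton $\{r_{s}\}_{s\in\Gamma}$ inducing $Y$. Extend $f$ to the continuous surjection $\tilde f\colon L\to K$, so that $T\colon C(K)\to C(L)$, $T(g)=g\circ\tilde f$, is an isometric embedding and the adjoint $T^{*}\colon(B_{C(L)^{*}},w^{*})\to(B_{C(K)^{*}},w^{*})$ is $w^{*}$-continuous and onto.

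Define $R_{s}\colon(B_{C(L)^{*}},w^{*})\to(B_{C(L)^{*}},w^{*})$ by $R_{s}(\nu)(g):=\nu(g\circ r_{s})$. Each $R_{s}$ is a $w^{*}$-continuous retraction; $R_{s}[B_{C(L)^{*}}]=B_{C(r_{s}[L])^{*}}$ is $w^{*}$-metrizable because $r_{s}[L]$ is metrizable, and the retractional skeleton axioms for $\{R_{s}\}_{s\in\Gamma}$ follow from those of $\{r_{s}\}$ combined with the dominated convergence theorem applied to $|\nu|$. Hence $\{R_{s}\}_{s\in\Gamma}$ is a retractional skeleton in $(B_{C(L)^{*}},w^{*})$ with induced subspace
\[
D:=\bigcup_{s\in\Gamma}B_{C(r_{s}[L])^{*}},
\]
which by Proposition \ref{CaratCountably} is a non-commutative Corson countably compact space.

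It remains to show $T^{*}(D)=B_{C(K)^{*}}$. Given $\mu\in B_{C(K)^{*}}$, decompose it into (at most) four positive Jordan-type summands $\mu_{j}$ with $\|\mu_{j}\|\le\|\mu\|$; by property $(M)$, each has a separable support $S_{j}=\overline{\{x_{n,j}\}}$. Pick preimages $y_{n,j}\in f^{-1}(x_{n,j})\subset Y$ and set $A_{j}:=\overline{\{y_{n,j}\}}^{L}$; by Theorem \ref{PropRetr}(i), $A_{j}\subset Y$ is metrizable compact, and since $\tilde f(A_{j})$ is closed in $K$ and contains $\{x_{n,j}\}$ it contains $S_{j}$. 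Using the standard surjectivity of pushforward of probability measures between compact Hausdorff spaces, lift each $\mu_{j}$ to $\nu_{j}\in M^{+}(A_{j})$ with $(\tilde f)_{*}\nu_{j}=\mu_{j}$ and $\|\nu_{j}\|=\|\mu_{j}\|$, and recombine the $\nu_{j}$ into $\nu\in B_{C(L)^{*}}$ with $T^{*}\nu=\mu$.

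To place $\nu$ in $D$, enumerate the countable collection $\{y_{n,j}\}$ as $\{z_{k}\}_{k\in\omega_{0}}$ and build a chain $t_{1}\le t_{2}\le\cdots$ in $\Gamma$ by choosing at each step $t_{k}\ge t_{k-1}$ with $z_{k}\in r_{t_{k}}[L]$, which is possible by up-directedness together with $z_{k}\in Y=\bigcup_{s\in\Gamma}r_{s}[L]$. By axiom (iii) the supremum $t_{\infty}=\sup_{k}t_{k}$ exists in $\Gamma$; since $s\le t$ implies $r_{s}[L]\subset r_{t}[L]$, each $z_{k}\in r_{t_{\infty}}[L]$, and closedness yields $\bigcup_{j}A_{j}\subset r_{t_{\infty}}[L]$. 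Hence $\mathrm{supp}(\nu)\subset r_{t_{\infty}}[L]$, i.e.\ $\nu\in B_{C(r_{t_{\infty}}[L])^{*}}\subset D$, completing the proof. The main obstacle is exactly this last chain construction: it is the point where property $(M)$ (providing countably many generators of each $S_{j}$) meets the $\omega$-completeness property (iii) of $\Gamma$ to upgrade the weakly non-commutative Valdivia conclusion of Theorem \ref{TeoremaMisurediradon} to the desired weakly non-commutative Corson one.
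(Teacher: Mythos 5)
Your route is essentially the one the paper takes for the real case: push the retractional skeleton of $L=\beta Y$ to a retractional skeleton on $(B_{C(L)^*},w^*)$ whose induced set $D$ consists of the measures fixed by some $R_s$ (equivalently, by Proposition \ref{supportoNumerabile}, the measures with separable support inside $Y$), and then use property $(M)$ to lift every $\mu\in B_{C(K)^*}$ into $D$; the paper settles the complex case afterwards by reducing it to the real one via $(\mu,\nu)\mapsto\mu+i\nu$ and Lemma \ref{OperzWnoncomm}, while you try to treat both scalar fields at once. Two steps of your write-up need repair. The first is the verification of axiom $(iv)$ for $\{R_s\}$: the limit $\nu=\lim_{s\in\Gamma}R_s\nu$ is along a net, and dominated convergence (which does justify the sequential axiom $(iii)$) says nothing about nets; pointwise convergence $r_s(x)\to x$ alone does not give $\int g\circ r_s\,d|\nu|$-type convergence over $\Gamma$. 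What is actually needed is the standard but nontrivial fact that for every $g\in C(L)$ one has $g\circ r_s=g$ for all $s$ in a cofinal subset of $\Gamma$, i.e.\ that $P_s(g)=g\circ r_s$ defines a $1$-projectional skeleton on $C(L)$ (\cite[Proposition 28]{Kubis1}, used in the first part of Proposition \ref{supportoNumerabile}); with it, $R_s\nu(g)$ is eventually equal to $\nu(g)$ and $(iv)$ follows. Cite that fact instead of invoking DCT.

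The second problem is the norm bookkeeping in the lifting step, which fails in the complex case: for $\mu=\mu_1-\mu_2+i(\mu_3-\mu_4)$ one only has $\|\mu_1\|+\dots+\|\mu_4\|\le 2\|\mu\|$, so the recombined $\nu$ need not lie in $B_{C(L)^*}$ (in the real case the Jordan decomposition gives $\|\mu_1\|+\|\mu_2\|=\|\mu\|$ and your argument is fine). The fix is easy and makes the decomposition unnecessary: property $(M)$ applied to $|\mu|/\|\mu\|$ shows $\mathrm{supp}\,\mu$ is separable; choose one countable dense subset, preimages $z_k\in Y$, and $A=\overline{\{z_k\}}\subset r_{t_\infty}[L]$ exactly as in your chain argument, note $\mathrm{supp}\,\mu\subset\tilde f(A)$, and lift $\mu$ in one stroke by Hahn--Banach through the isometric embedding $C(\tilde f(A))\hookrightarrow C(A)$, $h\mapsto h\circ\tilde f$, obtaining $\nu$ with $\|\nu\|=\|\mu\|\le 1$, $\mathrm{supp}\,\nu\subset A$ and $T^*\nu=\mu$; alternatively, prove only the real case your way and deduce the complex case as the paper does, via the continuous surjection $(\mu,\nu)\mapsto\mu+i\nu$ from $B_{C(K,\R)^*}\times B_{C(K,\R)^*}$ together with Lemma \ref{OperzWnoncomm}. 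Your chain construction placing the lift inside $D$ (monotonicity of $r_s[L]$ in $s$ plus $\sigma$-completeness of $\Gamma$, then Theorem \ref{PropRetr}) is correct and is indeed the point where property $(M)$ enters.
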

\begin{proof}
The proof of the real case follows as in the commutative case \cite[Theorem 3.7]{Kalenda1}, using Proposition \ref{supportoNumerabile} below instead of \cite[Proposition 5.1]{Kalenda2}.\\
Suppose that $K$ is a weakly non-commutative Corson compact space with property $(M)$, then using the real case and Lemma \ref{OperzWnoncomm},  $(B_{C(K,\R)^{*}},w^{*})\times (B_{C(K,\R)^{*}},w^{*})$ is weakly non-commutative Corson as well. Now consider
\begin{equation*}
\psi: (B_{C(K,\R)^{*}},w^{*})\times (B_{C(K,\R)^{*}},w^{*})\to (C(K,\C)^{*},w^{*}),
\end{equation*}
defined by $\psi(\mu,\nu)=\mu+i \nu$. $\psi$ is clearly continuous, hence $\psi((B_{C(K,\R)^{*}},w^{*})\times (B_{C(K,\R)^{*}},w^{*}))$ is weakly non-commutative Corson. Finally, since $(B_{C(K,\C)^{*}},w^{*})$ is a weak$^*$ compact space and $(B_{C(K,\C)^{*}},w^{*})\subset \psi((B_{C(K,\R)^{*}},w^{*})\times (B_{C(K,\R)^{*}},w^{*})) $, it is a weak$^*$ closed subspace of $\psi((B_{C(K,\R)^{*}},w^{*})\times (B_{C(K,\R)^{*}},w^{*}))$. Therefore, by Lemma \ref{OperzWnoncomm}, $(B_{C(K,\C)^{*}},w^{*})$ is a weakly non-commutative Corson compact space.
\end{proof}

Now we give the non-commutative version of \cite[Proposition 5.1]{Kalenda2}. To sake of completeness we will give the full proof although the last part is the same of the commutative one.
\begin{prop}\label{supportoNumerabile}
Let $K$ be a non-commutative Valdivia compact and $D\subset K$ be an induced subspace. Then the set
\begin{equation*}
S=\{\mu\in C(K)^{*}: \mbox{ supp}(\mu) \mbox{ is a separable subset of } D\}
\end{equation*}
is $1$-norming and induced by a 1-Projectional skeleton in $C(K)$.
\end{prop}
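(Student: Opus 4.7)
The plan is to lift the retractional skeleton on $K$ to a $1$-projectional skeleton on $C(K)$ via composition, and then to recognise $S$ as its induced subspace. Concretely, let $\{r_s\}_{s\in\Gamma}$ be a retractional skeleton inducing $D$ and set $P_s(f) = f\circ r_s$ for $f\in C(K)$. Idempotence, the commutation relations $P_s\circ P_t = P_t\circ P_s = P_s$ for $s\leq t$, and the estimate $\|P_s\|\leq 1$ follow at once from the corresponding properties of the $r_s$, while $P_s[C(K)]$ is separable since it is isometric to $C(r_s[K])$ and $r_s[K]$ is metrizable compact. The countable sup-continuity in the norm of $C(K)$ is precisely the content of \cite[Proposition 28]{Kubis1}, so $\{P_s\}_{s\in\Gamma}$ is a $1$-projectional skeleton in $C(K)$, with induced subspace $D^{'} := \bigcup_{s\in\Gamma} P_s^*[C(K)^*]$.

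To identify $D^{'}$ with $S$ I exploit the pushforward description of the adjoint: for $\mu\in C(K)^*$ and $f\in C(K)$, $(P_s^*\mu)(f) = \mu(f\circ r_s) = \int f\, d((r_s)_*\mu)$, so $P_s^*\mu = (r_s)_*\mu$ has support inside $r_s[K]$, a separable subset of $D$; this gives $D^{'}\subseteq S$. For the reverse inclusion I start from $\mu\in S$ and pick a countable set $\{x_n\}$ dense in $\mbox{supp}(\mu)\subseteq D$; for each $n$ select $s_n^{0}\in\Gamma$ with $r_{s_n^{0}}(x_n)=x_n$ and use up-directedness of $\Gamma$ to build inductively an increasing chain $s_1\leq s_2\leq\ldots$ with $s_n\geq s_k^{0}$ whenever $k\leq n$. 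By condition $(iii)$ of the retractional skeleton, $t:=\sup_n s_n$ exists in $\Gamma$; the commutation identity $r_{s_n}\circ r_{s_k^{0}} = r_{s_k^{0}}$ forces $r_{s_n}(x_k)=x_k$ for $n\geq k$, hence $r_t(x_k)=\lim_n r_{s_n}(x_k)=x_k$ for every $k$, so closedness of $r_t[K]$ yields $\mbox{supp}(\mu)\subseteq r_t[K]$. Since $r_t$ is the identity on its image, $f\circ r_t$ and $f$ agree on $\mbox{supp}(\mu)$, whence $P_t^*\mu=\mu\in D^{'}$.

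Finally, for the $1$-norming property I plan to combine density of $D$ in $K$ (Theorem \ref{PropRetr}$(i)$) with the fact that $\delta_x\in S\cap B_{C(K)^*}$ for every $x\in D$: given $f\in C(K)$, continuity yields $\|f\|=\sup_{x\in K}|f(x)|=\sup_{x\in D}|f(x)|=\sup_{x\in D}|\delta_x(f)|\leq \sup\{|\mu(f)|:\mu\in S\cap B_{C(K)^*}\}$, which is exactly the $1$-norming condition. The main delicate point is the extraction of the upper bound $t\in\Gamma$ in the second paragraph: although every point of a countable dense subset of $\mbox{supp}(\mu)$ is fixed by some retraction $r_{s_n^{0}}$, the support itself need not be contained in any single $r_{s_n^{0}}[K]$, and it is precisely the countable-sup property $(iii)$ of the retractional skeleton together with up-directedness of $\Gamma$ that lets one close everything off inside a single $r_t[K]$; everything else reduces to standard bookkeeping and to the cited facts.
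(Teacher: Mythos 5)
Your proposal is correct and shares the paper's overall framework --- the skeleton $P_s(f)=f\circ r_s$ on $C(K)$, the identification of $S$ with $\bigcup_{s\in\Gamma}P_s^{*}[C(K)^{*}]$, and $1$-norming via Dirac measures at points of the dense set $D$ --- but you prove both inclusions by genuinely different, and more direct, arguments than the paper. For ``induced $\subseteq$ separable support'' you use the pushforward identity $P_s^{*}\mu=(r_s)_*\mu$ and $\mathrm{supp}(P_s^{*}\mu)\subseteq r_s[K]$, whereas the paper argues via the Fr\'echet--Urysohn property of $S\cap B_{C(K)^{*}}$ (as an induced subspace of the dual ball), extracting a weak$^{*}$-convergent sequence of finitely supported measures and locating the support inside the metrizable closure of a countable subset of $D$. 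For the converse inclusion you build, just as the paper does, an upper bound $t\in\Gamma$ with $\mathrm{supp}\,\mu\subseteq r_t[K]$ from a countable dense subset of the support and $\sigma$-completeness of $\Gamma$; but you then conclude $P_t^{*}\mu=\mu$ in one stroke, since $f\circ r_t=f$ on $\mathrm{supp}\,\mu$ and a Radon measure is concentrated on its support, while the paper first reduces to the total variation (quoting the real case of Kalenda's argument), writes $d\mu=h\,d|\mu|$ by Radon--Nikod\'ym, and runs an $L^{1}(|\mu|)$-approximation by continuous functions. Your shortcut is legitimate --- the regularity fact $|\mu|(K\setminus\mathrm{supp}\,\mu)=0$ is exactly what the paper also uses when it restricts its integrals to $r_t[K]$ --- and it has the advantage of treating the real and complex cases uniformly, with no case split. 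Two minor points: when checking that $\{P_s\}_{s\in\Gamma}$ is a $1$-projectional skeleton, the genuinely nontrivial requirement is $\bigcup_{s\in\Gamma}P_s[C(K)]=C(K)$, i.e.\ that every $f\in C(K)$ factors through some $r_s$, not merely the countable sup-continuity; this is covered by Kubi\'s's Proposition 28, which you cite (and which the paper itself treats as standard), but your attribution of what that result supplies is slightly off. Also, in the pushforward step it is worth stating explicitly that $\mathrm{supp}(P_s^{*}\mu)$ is separable because it is a closed subset of the metrizable compact $r_s[K]\subseteq D$.
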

\begin{proof}
The real case follows by combining the first part of the complex case below and the second part of \cite[Proposition 5.1]{Kalenda2}.\\
Let $\{r_{s}\}_{s\in\Gamma}$ be a retractional skeleton on $K$ such that $D=\bigcup_{s\in\Gamma}r_{s}[K]$. It is standard to define a 1-Projectional skeleton $\{P_{s}\}_{s\in\Gamma}$ in $C(K,\C)$ as follow
\begin{equation*}
P_{s}(f)=f\circ r_{s}.
\end{equation*}
Let $S=\bigcup_{s\in\Gamma}P^{*}_{s}(C(K,\C)^{*})$ be the induced subspace. It is well known that it is 1-norming (hence weak$^{*}$-dense in $C(K,\C)^{*}$) linear and weak$^{*}$-countably closed. Moreover, $(B_{C(K,\C)^*},w^*)$ has retractional skeleton and $S\cap(B_{C(K,\C)^*},w^*) $ is an induced subspace.\\
Now we want to prove that
\begin{equation*}
S=\{\mu \in C(K,\C)^*: \mbox{ supp}\mu \mbox{ is a separable subset of } D\}.
\end{equation*}
We will prove the double inclusion.
\begin{itemize}
\item \textquotedblleft$\supseteq$\textquotedblright $\:$ Let $\mu$ be a real measure in the set on the right-hand side, then using the same argument of \cite[Proposition 5.1]{Kalenda2} we obtain $\mu\in S$.\\
Now, let $\mu$ be a complex measure in the set on the right-hand side, then, by the previous sentence, its total variation $|\mu|$ belongs to $S$ . Hence there exists $s_0\in\Gamma$ such that $P_{s}^{*}|\mu|=|\mu|$ for every $s\geq s_{0}$, in particular, by Riesz representation theorem, for every $f\in C(K,\C)$ we have
\begin{equation}\label{supportoNumerabile1}
\int_{K}fd|\mu|=\int_{K}f\circ r_{s} d|\mu|.
\end{equation}
Moreover, by the Radon-Nikod\'{y}m theorem there exists a measurable function $h$ such that $d\mu=h \,d|\mu|$ and $|h(x)|=1$ for every $x\in K$.\\
\underline{Claim:} There exists a $t\in\Gamma$ such that for every continuous function $f$ the equality
\begin{equation*}
\int_{K}f\cdot h\, d|\mu|=\int_{K}(f\circ r_{t})\cdot h\, d|\mu|.
\end{equation*}
holds.\\
Indeed, let $t\in \Gamma$ such that $t\geq s_0$ and supp$\mu\subset r_{t}[K]$: such $t$ exists by the $\sigma$-completeness of $\Gamma$ and the separability of supp$\mu$. Finally, let $f\in C(K,\C)$ and $\varepsilon>0$ then by the density of continuous function in $L^1(|\mu|)$ there exists $g\in C(K,\C)$ such that $\int_K |f\cdot h-g|\,d|\mu|<\varepsilon$; then using (\ref{supportoNumerabile1}) and the fact that supp$\mu\subset r_{t}[K]$, we obtain
\begin{equation*}
\begin{split}
\left|\int_{K}f\cdot h-(f\circ r_{t})\cdot h\, d|\mu|\right|&\leq \int_K |f\cdot h-g|\,d|\mu| + \int_K |(f\circ r_{t})\cdot h-g\circ r_t|\,d|\mu|\\
&<\varepsilon + \int_{r_t[K]} |(f\circ r_{t})\cdot h-g\circ r_t|\,d|\mu|\\
&=\varepsilon + \int_{r_t[K]} |f\cdot h-g|\,d|\mu|<2\varepsilon.
\end{split}
\end{equation*}
It proves the claim. Therefore $P^{*}_{t}\mu=\mu$, hence $\mu\in S$.

\item \textquotedblleft$\subseteq$\textquotedblright $\:$ Let $S^{'}=\mbox{span}\{\delta_{x}:x\in D\}$, since $S$ is linear and $\delta_{x}\in S$ for every $x\in D$ we have $S^{'}\subset S$. Moreover since $D$ is dense in $K$ we have that $S^{'}$ is $1$-norming. Then $S^{'}\cap B_{C(K,\C)^{*}}$ is weak$^*$ dense in $B_{C(K,\C)^{*}}$. In particular, every $\mu\in S\cap  B_{C(K,\C)^{*}}$ belongs to the weak$^{*}$ closure of $S^{'}\cap B_{C(K,\C)^{*}}$. Hence, since $S\cap  B_{C(K,\C)^{*}}$ is a weak$^*$ Fr\'{e}chet-Urysohn space, there exists a sequence $\{\mu_{n}\}_{n\in\omega_{0}}\subset S^{'}\cap B_{C(K,\C)^{*}}$ such that $\mu_{n}\overset{w^{*}}{\to} \mu$.\\
Let 
\begin{equation*}
C=\{x\in K: \: \exists\, n\in\omega_{0} \:\mu_{n}(\{x\})\neq 0\},
\end{equation*}
clearly $|C|\leq\aleph_{0}$. Moreover $\mu$ is supported by $\overline{C}$. Therefore, since $\overline{C}\subset D$ is separable, it is metrizable. Since supp$\mu\subset\overline{C} $ and $\overline{C}$ is metrizable and separable, we have that supp$\mu$ is separable as well.
\end{itemize}
\end{proof}

\section{$[0,\eta)$-sums}

We recall the definition of $[0,\eta)$-sum, introduced in \cite{Kalenda1}. Given an ordinal $\eta$ we will denote with $I(\eta)$ the subset of all isolated ordinals less than $\eta$. Let $\{X_{\alpha}\}_{\alpha\in I(\eta)}$ be a family of topological spaces, the $[0,\eta)$-sum is the set
\begin{equation*}
X=\{(\alpha,x):x\in X_{\alpha}, \alpha<\eta \mbox{ isolated} \} \cup \{(\alpha,\alpha): \alpha<\eta \mbox{ limit}\}
\end{equation*}
equipped with the following topology. Whenever $\alpha$ is isolated, the set $\{\alpha\}\times X_{\alpha}$ is canonically homeomorphic to $X_{\alpha}$ and clopen in $X$. A neighborhoods basis for $(\alpha,\alpha)$ if $\alpha$ is limit is formed by sets
\begin{equation*}
B_{\gamma}((\alpha,\alpha))=\{(\beta,x)\in X: \gamma<\beta<\alpha \}.
\end{equation*}
Since in our setting $X_{\alpha}$ is Hausdorff and completely regular, $X$ is Hausdorff and completely regular too.\\
Let $\eta$ be an uncountable cofinality ordinal and $X_{\alpha}$ be a countably compact space, for every $\alpha\in I(\eta)$. Let $Y$ be the $[0,\eta)$-sum of $\{X_{\alpha}\}_{\alpha\in I(\eta)}$ and $X$ be the topological subspace of $Y$ defined by
\begin{equation*}
\begin{split}
X=&\{(\alpha,x): x\in X_{\alpha}, \: \alpha<\eta \mbox{ isolated} \}\cup\\
&\{(\alpha,\alpha):\alpha<\eta \mbox{ limit with countable cofinality} \}.
\end{split}
\end{equation*}
We will say that $X$ is the \textit{countably $[0,\eta)$-sum} of $\{X_{\alpha}\}_{\alpha\in I(\eta)}$.
We observe that the countably $[0,\eta)$-sum is a countably closed subset of the $[0,\eta)$-sum. Then, since, by \cite[Lemma 2.3]{Kalenda1}, the $[0,\eta)$-sum is countably compact, we conclude that the countably $[0,\eta)$-sum is countably compact as well.
\begin{lem}\label{etaSomme}
Let $\eta$ be an uncountable cofinality ordinal and $\{X_{\alpha}\}_{\alpha\in I(\eta)}$ be a family of non-commutative Corson countably compact spaces. Let $X$ be the countably $[0,\eta)$-sum of $\{X_{\alpha}\}_{\alpha\in I(\eta)}$. Then $X$ is a non-commutative Corson countably compact space.
\end{lem}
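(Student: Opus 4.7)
The plan is to construct a full retractional skeleton on $X$ directly, paralleling part $(4)$ of Lemma \ref{Operznoncomm} but adapting the indexing to the well-ordered structure of $[0,\eta)$. For each $\alpha\in I(\eta)$ fix a full retractional skeleton $\{r^\alpha_s\}_{s\in\Gamma_\alpha}$ on $X_\alpha$, adjoin a new bottom element $0$ to $\Gamma_\alpha$, and fix a basepoint $x_\alpha\in X_\alpha$. Use as index set $\Gamma$ the collection of pairs $\gamma=(C_\gamma,(s^\gamma_\alpha)_{\alpha\in C_\gamma\cap I(\eta)})$ in which $C_\gamma$ is a countable \emph{admissible} subset of $[0,\eta)$ containing $0$, and $s^\gamma_\alpha\in\Gamma_\alpha$. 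Here \emph{admissible} means closed in the order topology and satisfying $\sup(C_\gamma\cap[0,\alpha))=\alpha$ for every limit $\alpha\in C_\gamma$; the countability of $C_\gamma$ then forces every limit element of $C_\gamma$ to have countable cofinality, so its point $(\alpha,\alpha)$ lies in $X$. Order $\Gamma$ coordinatewise; up-directedness is routine, since $C_{\gamma_1}\cup C_{\gamma_2}$ can always be completed to an admissible countable set by adjoining a cofinal $\omega$-sequence at each new limit element.

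For $\gamma\in\Gamma$ set $\pi_\gamma(\beta)=\max(C_\gamma\cap[0,\beta])$ and define $r_\gamma\colon X\to X$ by the following four-case rule on $(\beta,z)\in X$: map to $(\beta,r^\beta_{s^\gamma_\beta}(z))$ if $\beta\in C_\gamma\cap I(\eta)$; to $(\beta,\beta)$ if $\beta\in C_\gamma$ is a limit; to $(\pi_\gamma(\beta),\pi_\gamma(\beta))$ if $\beta\notin C_\gamma$ and $\pi_\gamma(\beta)$ is a limit; and to the constant $(\pi_\gamma(\beta),r^{\pi_\gamma(\beta)}_{s^\gamma_{\pi_\gamma(\beta)}}(x_{\pi_\gamma(\beta)}))$ if $\beta\notin C_\gamma$ and $\pi_\gamma(\beta)\in I(\eta)$. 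The range $r_\gamma[X]$ is then a countably $[0,\eta)$-subsum indexed by the countable ordinal $C_\gamma$, with fibres the metrizable compacta $r^\alpha_{s^\gamma_\alpha}[X_\alpha]$; as a regular countably compact space with a countable network it is metrizable and compact, giving axiom $(i)$ of a retractional skeleton.

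Axioms $(ii)$ and $(iii)$ reduce in each coordinate to the analogous properties of the $\{r^\alpha_s\}$, by a case inspection, using that suprema of countably many admissible sets are admissible (hence $\pi_\gamma$ and the choice of $s$-coordinates behave correctly under countable upward limits in $\Gamma$). For axiom $(iv)$ and fullness, given $(\beta,z)\in X$ take any admissible countable $C\ni\beta$ (containing a cofinal $\omega$-sequence at $\beta$ when $\beta$ is a countable-cofinality limit) and a sufficiently large $s_\beta$ with $r^\beta_{s_\beta}(z)=z$; the corresponding $\gamma$ then satisfies $r_\delta(\beta,z)=(\beta,z)$ for every $\delta\geq\gamma$, yielding both $\lim_\gamma r_\gamma(\beta,z)=(\beta,z)$ and $X=\bigcup_\gamma r_\gamma[X]$.

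The delicate step, and the main obstacle, is the continuity of each $r_\gamma$ at limit points $(\alpha,\alpha)$. If $\alpha\in C_\gamma$, admissibility is exactly what forces $\pi_\gamma(\beta)\to\alpha$ as $\beta\uparrow\alpha$, so that the four pieces of the definition glue continuously. If $\alpha\notin C_\gamma$, then $\pi_\gamma$ is locally constant on a neighbourhood of $\alpha$ and $r_\gamma$ is constant there. This admissibility requirement on the indexing sets $C_\gamma$ is the essential new ingredient with respect to the simpler constructions in Lemma \ref{Operznoncomm}(4) and Proposition \ref{1pointcpt}, and is dictated by the order topology of $[0,\eta)$.
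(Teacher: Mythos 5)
Your construction is correct and is essentially the paper's own proof: both index the skeleton by countable supports along $[0,\eta)$ paired with indices from the fibre skeletons, act fibre-wise on supported coordinates, collapse everything else down to the largest point of the support below, and the only delicate point in each case is continuity at limit points, which you handle correctly via your admissibility condition. The only real difference is bookkeeping: the paper keeps the support a set of isolated ordinals closed under adjoining the block $(\alpha,\alpha+\omega_{0})$, which forces every collapse target $\sup([0,\alpha]\cap S(\gamma))$ to be a limit ordinal of countable cofinality (so no basepoints $x_{\alpha}$ are needed, and the supremum of an increasing sequence of indices has support equal to the plain union of supports), whereas your admissible closed sets $C_{\gamma}$ may have isolated maxima, which is exactly why you need the basepoints and why the supremum of an increasing sequence in your $\Gamma$ is the closure of $\bigcup_{n}C_{\gamma_{n}}$ rather than the union itself --- a point worth stating explicitly when you verify axiom $(iii)$, since the limit formula $r_{\gamma}(x)=\lim_{n}r_{\gamma_{n}}(x)$ has to be checked at the newly added limit points.
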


\begin{proof}
For every isolated $\alpha<\eta$ there exist an up-directed partially ordered set $(\Gamma_{\alpha},\preceq_{\alpha})$ and a full retractional skeleton $\{r_{s}^{\alpha}\}_{s\in\Gamma_{\alpha}}$ on $X_{\alpha}$.\\ 
We define $\Gamma^{'}_{\alpha}=\Gamma_{\alpha} \cup \{0\}$ and a relation $\leq_{\alpha}$ such that if we restrict $\leq_{\alpha}$ to $\Gamma_{\alpha}\times \Gamma_{\alpha}$ we have the same order of $(\Gamma_{\alpha},\preceq_{\alpha})$ and $0\leq_{\alpha} s$, for every $s\in \Gamma_{\alpha}$. This way $(\Gamma_{\alpha}^{'},\leq_{\alpha})$ is an up-directed partially ordered set.\\
For every $\gamma\in \prod_{\alpha\in I(\eta)}\Gamma_{\alpha}^{'}$ let us define $S(\gamma)=\{\alpha\in I(\eta): \gamma(\alpha)\neq 0\}$. Now we are going to define the partially ordered set and the family of retractions on $X$. In order to do this, let
\begin{equation*}
\begin{split}
\Gamma=&\{\gamma\in\prod_{\alpha\in I(\eta)}\Gamma_{\alpha}^{'}: |S(\gamma)|\leq \aleph_{0}, \, 0\in S(\gamma), \mbox{ and if } \alpha\in S(\gamma)\\ &\mbox{ then } \beta \in S(\gamma) \mbox{ whenever } \alpha<\beta<\alpha+\omega_{0}  \}.
\end{split}
\end{equation*}
Given $\gamma_{1},\gamma_{2}\in \Gamma$ we will say that $\gamma_{1}\leq \gamma_{2}$ if and only if $\gamma_{1}(\alpha)\leq_{\alpha} \gamma_{2}(\alpha)$ for every $\alpha\in I(\eta)$. We observe that, if $\gamma_{1}\leq\gamma_{2}$ then $S(\gamma_{1})\subset S(\gamma_{2})$. By the definition of $(\Gamma_{\alpha}^{'},\leq_{\alpha})$ it is clear that $(\Gamma,\leq)$ is an up-directed partially ordered set.\\
For every $\gamma\in \Gamma$ we define $r_{\gamma}:X\rightarrow X$ as follows: let $(\alpha,x)\in X$ and $\beta_{\alpha}=\sup([0,\alpha]\cap S(\gamma))$
\begin{equation*}
r_{\gamma}(\alpha,x)=\begin{cases}
(\alpha,r_{\gamma(\alpha)}^{\alpha}(x)) \: &\mbox{if } \alpha \mbox{ is isolated and } \gamma(\alpha)\neq 0;\\
(\beta_{\alpha},\beta_{\alpha}) \: &\mbox{if }\alpha \mbox{ is isolated and } \gamma(\alpha)= 0 \mbox{ or } \alpha \mbox{ is limit}.
\end{cases}
\end{equation*}

\noindent
\underline{Claim}: for every $\gamma\in \Gamma$ the retraction $r_{\gamma}$ is a continuous mapping.\\
Let $\gamma\in\Gamma$ and let $\{(\alpha_{\lambda},x_{\lambda})\}_{\lambda\in\Lambda}$ be a net converging to $(\alpha,x)$. If $\alpha$ is isolated then $\{\alpha_{\lambda}\}_{\lambda\in\Lambda}$ is eventually constant and equal to $\alpha$;
\begin{itemize}
\item if $\alpha\in S(\gamma)$, by continuity of $r_{\gamma(\alpha)}^{\alpha}$, we have $\lim_{\lambda\in\Lambda}r_{\gamma}(\alpha_{\lambda},x_{\lambda})=r_{\gamma}(\alpha,x)$;
\item if $\alpha\notin S(\gamma)$ we have that $r_{\gamma}(\alpha_{\lambda},x_{\lambda})$ is eventually constant, hence it is clear that $\lim_{\lambda\in\Lambda}r_{\gamma}(\alpha_{\lambda},x_{\lambda})=r_{\gamma}(\alpha,x)$.
\end{itemize}
In the case of $\alpha$ limit we can suppose without loss of generality that $\alpha_{\lambda}\leq \alpha$ for every $\lambda\in\Lambda$. Two cases are possible:
\begin{itemize}
\item if $\sup([0,\alpha]\cap S(\gamma))=\alpha$, since $\alpha$ has countably cofinality, by definition of supremum there exists a sequence $\{\xi_{n}\}_{n\in\omega_{0}}\in [0,\alpha]\cap S(\gamma)$ such that $\xi_{n}$ is increasing and convergent to $\alpha$. For every $n\in\omega_{0}$, by definition of convergence of $\{\alpha_{\lambda}\}_{\lambda\in\Lambda}$ there exists $\lambda_{n}$ such that $\alpha_{\lambda}\in [\xi_{n},\alpha]$ for every $\lambda\geq \lambda_{n}$. Therefore $\lim_{\lambda\in\Lambda}r_{\gamma}(\alpha_{\lambda},x_{\lambda})=(\alpha,\alpha)=r_{\gamma}(\alpha,x)$.
\item if $\beta=\sup([0,\alpha]\cap S(\gamma))<\alpha$ then for a sufficiently large $\lambda$ we have $\beta<\alpha_{\lambda}\leq\alpha$; hence by definition of $r_{\gamma}$ we have $\lim\limits_{\lambda\in\Lambda} r_{\gamma}(\alpha_{\lambda},x_{\lambda})=r_{\gamma}(\alpha,x)=(\beta,\beta)$.
\end{itemize}
Therefore $r_{\gamma}$ is continuous for every $\gamma\in\Gamma$.\\

It remains to prove that $\{r_{\gamma}\}_{\gamma\in\Gamma}$ is a full retractional skeleton.
\begin{enumerate}[$(i)$]
\item for every $\gamma\in \Gamma$ the subspace $r_{\gamma}[X]$ is a closed subset of $X$, hence it is countably compact. Moreover, since it has a countable network, it is compact and metrizable.
\item Let $\gamma_{1},\gamma_{2}\in\Gamma$ such that $\gamma_{1}\leq \gamma_{2}$, then $S(\gamma_{1})\subset S(\gamma_{2})$. Let $(\alpha,x)\in X$
\begin{itemize}
\item if $\alpha$ is limit let $\beta_{\alpha}=\sup([0,\alpha]\cap S(\gamma_{1}))=\sup([0,\alpha]\cap S(\gamma_{1})\cap S(\gamma_{2}))$ and $\beta_{\alpha}^{'}=\sup([0,\alpha]\cap S(\gamma_{2}))$. We observe that $\beta_{\alpha}\leq\beta_{\alpha}^{'}\leq\alpha$ and that the intersection of the interval $(\beta_{\alpha},\beta_{\alpha}^{'})$ and $S(\gamma_{1})$ is empty. Hence:
\begin{equation*}
r_{\gamma_{1}}\circ r_{\gamma_{2}}(\alpha,\alpha)=r_{\gamma_{2}}\circ r_{\gamma_{1}}(\alpha,\alpha)=r_{\gamma_{1}}(\alpha,\alpha)=(\beta_{\alpha},\beta_{\alpha}).
\end{equation*}
\item if $\alpha$ is isolated three cases are possible:
\begin{enumerate}[(1)]
\item if $\gamma_{1}(\alpha)=0$ and $\gamma_{2}(\alpha)=0$, let $\beta_{\alpha}=\sup([0,\alpha]\cap S(\gamma_{1}))$ and $\beta_{\alpha}^{'}=\sup([0,\alpha]\cap S(\gamma_{2}))$, thus using the same observation of the previous point we have:
\begin{equation*}
r_{\gamma_{2}}(r_{\gamma_{1}}(\alpha,x))=r_{\gamma_{2}}(\beta_{\alpha},\beta_{\alpha})=(\beta_{\alpha},\beta_{\alpha})=r_{\gamma}(\alpha,x),
\end{equation*}
\begin{equation*}
r_{\gamma_{1}}(r_{\gamma_{2}}(\alpha,x))=r_{\gamma_{2}}(\beta_{\alpha}^{'},\beta_{\alpha}^{'})=(\beta_{\alpha},\beta_{\alpha})=r_{\gamma}(\alpha,x);
\end{equation*}
\item if $\gamma_{1}(\alpha)=0$ and $\gamma_{2}(\alpha)\neq 0$, let $\beta_{\alpha}=\sup([0,\alpha]\cap S(\gamma_{1}))$. Thus we have
\begin{equation*}
r_{\gamma_{1}}(\alpha,x)=(\beta_{\alpha},\beta_{\alpha}),
\end{equation*}
since $S(\gamma_{1})\subset S(\gamma_{2})$
\begin{equation*}
r_{\gamma_{2}}(r_{\gamma_{1}}(\alpha,x))=r_{\gamma_{2}}(\beta_{\alpha},\beta_{\alpha})=(\beta_{\alpha},\beta_{\alpha})
\end{equation*}
and at the end we have
\begin{equation*}
r_{\gamma_{1}}(r_{\gamma_{2}}(\alpha,x))=r_{\gamma_{1}}(\alpha,r_{\gamma_{2}(\alpha)}^{\alpha}(x))=(\beta_{\alpha},\beta_{\alpha});
\end{equation*}
\item if $\gamma_{1}(\alpha)\neq 0$ and $\gamma_{2}(\alpha)\neq 0$, since for every $\alpha\in I(\eta)$ the family $\{r_{s}^{\alpha}\}_{s\in\Gamma_{\alpha}}$ is a retractional skeleton on $X_{\alpha}$, we have
\begin{equation*}
\begin{split}
&r_{\gamma_{1}}\circ r_{\gamma_{2}}(\alpha,x)=(\alpha,r_{\gamma_{1}}^{\alpha}\circ r_{\gamma_{2}}^{\alpha}(x))=(\alpha,r_{\gamma_{1}}^{\alpha}(x))=r_{\gamma_{1}}(\alpha,x),\\
&r_{\gamma_{2}}\circ r_{\gamma_{1}}(\alpha,x)=(\alpha,r_{\gamma_{2}}^{\alpha}\circ r_{\gamma_{1}}^{\alpha}(x))=(\alpha,r_{\gamma_{1}}^{\alpha}(x))=r_{\gamma_{1}}(\alpha,x).\\
\end{split}
\end{equation*}
\end{enumerate}
\end{itemize}
\item Let $\gamma_{1}\leq\gamma_{2}\leq ...\leq \gamma_{n}\leq ...$ and define $\gamma(\alpha)=\sup_{n<\omega_{0}}\gamma_{n}(\alpha)$, for every $\alpha\in I(\eta)$. Since $S(\gamma_{n})\subset S(\gamma_{n+1})$ for every $n<\omega_{0}$ we have that $\bigcup_{n\in\omega_{0}}S(\gamma_{n})=S(\gamma)$ is countable, $0\in S(\gamma)$ and if $\alpha\in S(\gamma)$ there exists $n\in \omega_{0}$ such that $\alpha\in S(\gamma_{n})$, hence $[\alpha,\alpha+\omega_{0})\subset S(\gamma_{n})\subset S(\gamma)$, thus $\gamma\in\Gamma$. Moreover, let $(\alpha,x)\in X$ then
\begin{itemize}
\item if $\alpha\in S(\gamma_{n_{0}})$ for some $n_{0}<\omega_{0}$ then $\alpha \in S(\gamma_{n})$ for every $n>n_{0}$. Thus we have
\begin{equation*}
\begin{split}
\lim_{n\in\omega_{0}} r_{\gamma_{n}}(\alpha,x)&=\lim_{n>n_{0}}r_{\gamma_{n}}(\alpha,x)=\lim_{n>n_{0}}(\alpha,r_{\gamma_{n}(\alpha)}^{\alpha}(x))\\
&=(\alpha,r_{\gamma(\alpha)}^{\alpha}(x))=r_{\gamma}(\alpha,x).
\end{split}
\end{equation*}
\item if $\alpha\notin S(\gamma)$, then
\begin{equation*}
\begin{split}
\lim_{n\in\omega_{0}}r_{\gamma_{n}}(\alpha,x)&=\lim_{n\in\omega_{0}} (\sup([0,\alpha]\cap S(\gamma_{n})), \sup ([0,\alpha]\cap S(\gamma_{n})))\\&
=\sup_{n<\omega_{0}}(\sup([0,\alpha]\cap S(\gamma_{n})), \sup([0,\alpha]\cap S(\gamma_{n})))\\
&=(\sup([0,\alpha]\cap(\bigcup\limits_{n\in\omega_{0}}S(\gamma_{n}))),\sup([0,\alpha]\cap(\bigcup\limits_{n\in\omega_{0}}S(\gamma_{n}))))\\
&=r_{\gamma}(\alpha,x).
\end{split}
\end{equation*} 
\end{itemize}

\item For every $(\alpha,x)\in X$ there exists a $\gamma\in \Gamma$ such that $r_{\gamma}(\alpha,x)=(\alpha,x)$. In fact
\begin{itemize}
\item if $\alpha$ is isolated, since $\{r_{s}^{\alpha}\}_{s\in\Gamma_{\alpha}}$ is a full retractional skeleton on $X_{\alpha}$ there exists $s\in\Gamma_{\alpha}$ such that $r_{s}^{\alpha}(x)=x$. Hence there exists $\gamma \in \Gamma$ with $\gamma(\alpha)\geq s$ such that $r_{\gamma}(\alpha,x)=(\alpha,x)$;
\item if $\alpha$ is limit, there exists a sequence $\{\alpha_{n}\}_{n\in\omega_{0}}$ of isolated points such that $\alpha_{n}\nearrow \alpha$, moreover there exists a $\gamma\in\Gamma$ such that $\{\alpha_{n}\}_{n\in\omega_{0}}\subset S(\gamma)$. Hence, we have $r_{\gamma}(\alpha,\alpha)=(\alpha,\alpha)$.
\end{itemize}
Hence $\lim\limits_{\gamma\in\Gamma}r_{\gamma}(\alpha,x)=(\alpha,x).$
\end{enumerate}
Finally it is clear that $X=\bigcup\limits_{\gamma\in\Gamma}r_{\gamma}[X]$.
\end{proof} 

\begin{lem}\label{weaklyetasomme}
Let $\eta$ be an uncountable cofinality ordinal and $X_{\alpha}$ be a weakly non-commutative Corson countably compact space for every isolated $\alpha<\eta$. Let $X$ be the $[0,\eta+1)$-sum of $\{X_{\alpha}\}_{\alpha\in I(\eta)}$. Then $X$ is a weakly non-commutative Corson countably compact space.
\end{lem}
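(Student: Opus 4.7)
The plan is to first reduce to the non-commutative case and then argue by transfinite induction on $\eta$. For each isolated $\alpha<\eta$ pick a continuous surjection $f_\alpha\colon Y_\alpha\to X_\alpha$ with $Y_\alpha$ non-commutative Corson countably compact, and let $W$ denote the $[0,\eta+1)$-sum of $\{Y_\alpha\}_{\alpha\in I(\eta)}$. The natural coordinate-wise map $F\colon W\to X$ (i.e., $F((\alpha,y))=(\alpha,f_\alpha(y))$ on isolated fibres, fixing limit markers) is a continuous surjection by essentially the continuity argument at limit points used in Lemma \ref{etaSomme}; hence, by Lemma \ref{OperzWnoncomm}(3), it is enough to show that $W$ itself is weakly non-commutative Corson countably compact.

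For the base case $\eta=\omega_1$ the only uncountable-cofinality limit in $[0,\omega_1]$ is $(\omega_1,\omega_1)$. Let $Z$ be the countably $[0,\omega_1)$-sum of $\{Y_\alpha\}$, which is non-commutative Corson countably compact by Lemma \ref{etaSomme}. Then $P=Z\oplus\{\ast\}$ is non-commutative Corson countably compact by Lemma \ref{Operznoncomm}(3), and the map sending $Z$ by inclusion into $W$ and $\ast$ to $(\omega_1,\omega_1)$ is a continuous surjection. For $\eta>\omega_1$, fix a continuous strictly increasing cofinal sequence $\{\eta_\mu\}_{\mu<\operatorname{cf}(\eta)}$ of uncountable-cofinality limits in $[0,\eta)$. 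By the inductive hypothesis each chunk $W^{\eta_\mu}:=[0,\eta_\mu+1)$-sum of $\{Y_\alpha\}_{\alpha\in I(\eta_\mu)}$ is weakly non-commutative Corson countably compact, and so is each closed clopen increment $W^{\eta_{\mu+1}}\setminus W^{\eta_\mu}$ by Lemma \ref{OperzWnoncomm}(1); thus each such increment admits a non-commutative Corson countably compact cover $Q_\mu$. I would then build nested non-commutative Corson countably compact spaces $P^{\eta_\mu}$ with compatible continuous surjections $g_\mu\colon P^{\eta_\mu}\to W^{\eta_\mu}$ by transfinite recursion, taking finite topological sums with the $Q_\mu$'s at successor stages (Lemma \ref{Operznoncomm}(3)) and adjoining an apex $\ast_\mu$ mapped to $(\eta_\mu,\eta_\mu)$ at limit stages. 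A final finite topological sum with a point mapped to $(\eta,\eta)$ then produces the required non-commutative Corson countably compact source surjecting onto $W$.

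The main technical obstacle is the behavior at the limit stages of this recursion. When $\operatorname{cf}(\mu)\geq\omega_1$, the target $(\eta_\mu,\eta_\mu)$ is sequentially isolated in $W^{\eta_\mu}$ (no countable ordinal-sequence reaches $\eta_\mu$ from below), so $\ast_\mu$ can be attached as a clopen isolated point via finite topological sum with the direct union $\bigcup_{\nu<\mu}P^{\eta_\nu}$; the union is automatically countably compact (countable index sequences being bounded below $\mu$) and admits a full retractional skeleton obtained by combining the $\{r^\nu_s\}_{s\in\Gamma_\nu}$'s along an indexing set of countable-support sequences in $\prod_\nu\Gamma'_\nu$, in the spirit of the skeleton constructed in Lemma \ref{etaSomme}. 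When $\operatorname{cf}(\mu)=\omega$, the target $(\eta_\mu,\eta_\mu)$ is instead a genuine sequential limit in $W^{\eta_\mu}$, so $\ast_\mu$ must be accessible from the chain: one equips it with the neighborhood basis of complements of the initial segments $P^{\eta_\nu}$ (a one-point compactification along the chain rather than a one-point countably compact modification of a topological sum), whereupon any sequence converging to $\ast_\mu$ escapes every $P^{\eta_\nu}$, forcing its image in $W^{\eta_\mu}$ to have first coordinates tending to $\eta_\mu$ from below and thus to converge to $(\eta_\mu,\eta_\mu)$, which gives continuity of $g_\mu$.
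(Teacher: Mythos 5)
Your reduction to the sum $W$ of the $Y_\alpha$'s, the base case, and the successor steps are fine, but the recursion has a genuine gap exactly where the difficulty of the lemma sits: at limit stages $\mu$ with $\operatorname{cf}(\mu)\geq\omega_1$, and again for the final union over the whole cofinal sequence, you need the increasing union $\bigcup_{\nu<\mu}P^{\eta_\nu}$ to be non-commutative Corson countably compact, and this is not covered by any stability result you can cite. The union is not a topological sum of the clopen increments, because at earlier countable-cofinality stages you adjoined apex points $\ast_\lambda$ whose neighborhoods meet cofinally many increments below $\lambda$; it is not a countably closed subspace, quotient, or one-point modification of anything already known to have a full retractional skeleton, so Lemma \ref{Operznoncomm} gives nothing. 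The sentence ``admits a full retractional skeleton obtained by combining the $\{r^\nu_s\}$ \dots in the spirit of Lemma \ref{etaSomme}'' is precisely the nontrivial content: one must say where a retraction indexed by a countable-support element sends the increments outside its support (presumably down to the last apex below), and then verify continuity at every $\ast_\lambda$, metrizability of the images, and conditions $(iii)$--$(iv)$ --- i.e.\ rerun the whole proof of Lemma \ref{etaSomme} in a messier setting, so the recursion saves nothing. (The natural repair is to observe that, with the stage-by-stage topology, $\bigcup_{\nu<\mu}P^{\eta_\nu}$ is homeomorphic to a countably $[0,\mu)$-type sum of the increments, with the $\ast_\lambda$'s as the countable-cofinality markers, and then quote Lemma \ref{etaSomme}; but this identification is not made.) Two smaller points: a continuous strictly increasing cofinal sequence cannot consist only of uncountable-cofinality ordinals, since at stages $\mu$ with $\operatorname{cf}(\mu)=\omega$ continuity forces $\operatorname{cf}(\eta_\mu)=\omega$ (this is what pushes you into the two limit cases); and, conversely, your worry at the countable-cofinality stages is unnecessary: choosing a cofinal $\omega$-subsequence, $\bigcup_{\nu<\mu}P^{\eta_\nu}\cup\{\ast_\mu\}$ with your topology \emph{is} a one-point countably compact modification of the topological sum of the clopen increments in the permissive sense of Definition \ref{def1pointmod}, so Lemma \ref{Operznoncomm}(4) applies there.

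For comparison, the paper avoids transfinite recursion altogether with a single construction: re-index by shifting each $Y_\beta$ to the successor slot $\beta+1$, place a one-point space at every slot $\beta+1$ with $\beta$ limit, take the countably $[0,\eta)$-sum $Z$ of these (to which Lemma \ref{etaSomme} applies directly), and set $Y=Z\oplus\{\mathrm{pt}\}$; the surjection onto $X$ collapses each singleton slot onto the limit marker $(\beta,\beta)$ just below it and the extra point onto $(\eta,\eta)$. Every marker of uncountable cofinality is then the image of an isolated clopen fibre of the domain, so continuity is immediate, while the countable-cofinality markers are already present in the countably sum. If you want to keep your inductive scheme you must actually construct (or identify) the skeleton on the chain unions; otherwise the successor-shift trick is the short route.
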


\begin{proof}
For every $\alpha\in I(\eta)$ there are a non-commutative Corson countably compact space $Y_{\alpha}$ and a continuous surjection $f_{\alpha}:Y_{\alpha}\to X_{\alpha}$.\\
Now, we are going to define a suitable family of countably compact spaces indexed on $I(\eta)$. Let $\alpha\in I(\eta)$, put $Z_{\alpha}=Y_{\alpha}$ if $\alpha<\omega_{0}$, $Z_{\alpha}=\{\alpha\}$ if $\alpha=\beta+1$ and $\beta$ is limit finally put $Z_{\alpha}=Y_{\beta}$ if $\alpha=\beta+1>\omega_{0}$ and $\beta$ is not limit.\\
Let $X$ be the $[0,\eta+1)$-sum of $\{X_{\alpha}\}_{\alpha\in I(\eta)}$ and $Y=Z\oplus\{(\eta+1,\eta+1)\}$ where $Z$ is the countably $[0,\eta)$-sum of $\{Z_{\alpha}\}_{\alpha\in I(\eta)}$. By Lemma \ref{etaSomme} and Lemma \ref{Operznoncomm} $Y$ is a non-commutative Corson countably compact space.\\
Finally, we define a mappings $f:Y\to X$ as follows:
\begin{equation*}
f(\alpha,x)=\begin{cases}
(\alpha,\alpha) \; &\mbox{if } \alpha \mbox{ is limit;}\\
(\beta,\beta) &\mbox{if } \alpha=\beta+1 \mbox{ and } \beta \mbox{ is limit;}\\
(\beta,f_{\beta}(x)) &\mbox{if } \alpha=\beta+1, \: \beta>\omega_{0} \mbox{ and } \beta \mbox{ is not limit;}\\
(\alpha,f_{\alpha}(x)) &\mbox{if } \alpha<\omega_{0}.
\end{cases}
\end{equation*}
Since every limit ordinal is covered by its successor and every $f_{\alpha}$ is surjective, we have that $f$ is surjective.\\
\underline{Claim:} $f$ is continuous.\\
Let $\{(\alpha_{\lambda},x_{\lambda})\}_{\lambda\in \Lambda}$ be a converging net to $(\alpha,x)\in Y$. Two cases are possible:
\begin{enumerate}[(1)]
\item $\alpha$ is not limit, then, since $Z_{\alpha}$ is clopen there exists a $\lambda_{0}\in \Lambda$ such that $x_{\lambda}\in Z_{\alpha}$ for every $\lambda\geq\lambda_{0}$. Let $\alpha=\beta+1$, we split in other three cases:
\begin{itemize}
\item $\beta$ is limit, by definition of the function $f$, we have $f(\alpha_{\lambda},x_{\lambda})=(\beta,\beta)$ for every $\lambda\geq\lambda_{0}$. Hence we have $\lim_{\lambda\in\Lambda}f(\alpha_{\lambda},x_{\lambda})=(\beta,\beta)=f(\alpha,x)$;
\item $\beta$ is not limit and $\beta>\omega_{0}$, by definition $f(\alpha_{\lambda},x_{\lambda})=(\beta,f_{\beta}(x_{\lambda}))$ for every $\lambda\geq\lambda_{0}$. Hence, by continuity of $f_{\beta}$ we have $\lim_{\lambda\in\Lambda}f(\alpha_{\lambda},x_{\lambda})=(\beta,f_{\beta}(x))=f(\alpha,x)$;
\item $\alpha<\omega_{0}$ by definition $f(\alpha_{\lambda},x_{\lambda})=(\alpha,f_{\alpha}(x_{\lambda}))$ for every $\lambda\geq\lambda_{0}$. Hence, by continuity of $f_{\alpha}$ we have $\lim_{\lambda\in\Lambda}f(\alpha_{\lambda},x_{\lambda})=(\alpha,f_{\alpha}(x))=f(\alpha,x)$.
\end{itemize}
\item If $\alpha$ is limit, the using definition of the function $f$ and the topology on $X$ we have $\lim_{\lambda\in\Lambda}f(\alpha_{\lambda},x_{\lambda})=(\alpha,\alpha)=f(\alpha,\alpha)$.
\end{enumerate}
Therefore $X$ is a weakly non-commutative Corson countably compact space. 
\end{proof}

The following Lemma is an analogue of \cite[Lemma 2.4]{Kalenda1}. It gives a characterization of non-commutative Corson countably compact spaces in the class of weakly non-commutative countably compact.

\begin{lem}\label{CarattWeakly}
Let $X$ be a weakly non-commutative Corson countably compact space. The following assertions are equivalent:
\begin{enumerate}
\item $X$ is non-commutative Corson.
\item $X$ is Fr\'{e}chet-Urysohn.
\item $X$ has countable tightness.
\end{enumerate}
\end{lem}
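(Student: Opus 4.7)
The plan is to prove the cyclic chain $(1)\Rightarrow(2)\Rightarrow(3)\Rightarrow(1)$, with the first two steps essentially immediate and the third carrying all the content. For $(1)\Rightarrow(2)$, Proposition \ref{CaratCountably} supplies a retractional skeleton in $\beta X$ whose induced subspace is exactly $X$, so Theorem \ref{PropRetr}(ii) tells us that $X$ is Fr\'{e}chet-Urysohn. The implication $(2)\Rightarrow(3)$ is standard general topology.

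For $(3)\Rightarrow(1)$, my strategy is to realize $X$ as a quotient image of a non-commutative Corson countably compact space and then invoke Lemma \ref{Operznoncomm}(5). By the definition of weakly non-commutative Corson, fix a non-commutative Corson countably compact space $Y$ together with a continuous surjection $f:Y\to X$. I claim that countable tightness of $X$ upgrades $f$ to a quotient map. Take $A\subset X$ with $f^{-1}(A)$ closed in $Y$, and let $x\in\overline{A}$. Countable tightness produces a countable set $\{x_{n}\}_{n\in\omega_{0}}\subset A$ with $x\in\overline{\{x_{n}\}_{n\in\omega_{0}}}$; for each $n$ pick a lift $y_{n}\in f^{-1}(\{x_{n}\})$. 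By Proposition \ref{CaratCountably}, $Y$ is the induced subspace of some retractional skeleton in $\beta Y$, so Theorem \ref{PropRetr}(i) makes $C:=\overline{\{y_{n}\}_{n\in\omega_{0}}}^{\beta Y}$ a metrizable compact subset of $Y$; in particular the closure of $\{y_{n}\}_{n\in\omega_{0}}$ in $Y$ coincides with $C$. The image $f(C)$ is then compact in $X$, contains every $x_{n}$, and therefore contains the closure of $\{x_{n}\}_{n\in\omega_{0}}$, so there exists $y\in C$ with $f(y)=x$. Since $\{y_{n}\}_{n\in\omega_{0}}\subset f^{-1}(A)$ and $f^{-1}(A)$ is closed in $Y$, the set $C$ also sits inside $f^{-1}(A)$, so $y\in f^{-1}(A)$ and $x=f(y)\in A$. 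Hence $A$ is closed, $f$ is a quotient map, and Lemma \ref{Operznoncomm}(5) concludes that $X$ is non-commutative Corson countably compact.

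The only delicate point is the passage from countable tightness of $X$ to the quotient property of $f$. It depends crucially on Theorem \ref{PropRetr}(i): countable subsets of the induced space $Y$ have metrizable compact closures contained in $Y$. This is what lets us confine a countable collection of lifts inside a compact piece of $Y$ and then surject that piece onto the closure of the accumulating sequence in $X$. Without this Fr\'{e}chet-Urysohn-like feature of the induced subspace, countable tightness of $X$ alone would not be enough to promote an arbitrary continuous surjection onto $X$ to a quotient map.
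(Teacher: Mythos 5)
Your proof is correct and follows essentially the same route as the paper: the same use of Proposition \ref{CaratCountably} and Theorem \ref{PropRetr} for $(1)\Rightarrow(2)$, and for $(3)\Rightarrow(1)$ the same key step of lifting a countable set witnessing countable tightness to $Y$, using Theorem \ref{PropRetr}(i) to get a metrizable compact closure inside $Y$, and concluding via Lemma \ref{Operznoncomm}(5). The only (cosmetic) difference is that you verify the quotient property of $f$ directly on sets with closed preimage, whereas the paper shows $f$ is a closed map and then observes that a continuous closed surjection is a quotient map.
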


\begin{proof}
$(1\Rightarrow 2)$ By Proposition \ref{CaratCountably} $X$ is induced by a retractional skeleton in $\beta X$, hence by Theorem \ref{PropRetr}, it is a Fr\'{e}chet-Urysohn space.\\
$(2\Rightarrow 3)$ is trivial.\\
$(3\Rightarrow 1)$ Since $X$ is a weakly non-commutative Corson countably compact space, there are a non-commutative Corson countably compact space and a continuous onto mapping $f:Y\to X$. Let $F\subset Y$ be a closed subspace. We claim that $f(F)$ is closed in $X$. Let $x\in \overline{f(F)}$, by countably tightness of $X$ there exists a countable subset $C\subset f(F)$ such that $x\in \overline{C}$. Since $f$ is a onto mapping, there exists a countable set $D\subset F$ such that $f(D)=C$. Then $\overline{D}\subset F$ and, by Theorem \ref{PropRetr} $\overline{D}$ is a metrizable countably compact space, hence it is compact. Thus $f(\overline{D})$ is compact and contains $x$. Since $x\in f(\overline{D})\subset f(F)$, we deduce that $f(F)$ is closed.
Therefore, since $f$ is a continuous closed onto mapping, it is a quotient mapping. Hence, the assertion follows by Lemma \ref{Operznoncomm}. 
\end{proof}

\begin{thm}\label{teoOrdinali}
\begin{enumerate}[(1)]
\item The ordinal space $[0,\eta]$ is non-commutative Corson if and only if it is Corson if and only if $\eta<\omega_{1}$.\\
The ordinal space $[0,\eta]$ is a weakly non-commutative Corson for every ordinal $\eta$.
\item Let $X$ be a countably compact set of ordinals. Then $X$ is a weakly non-commutative Corson countably compact space.
\item Let $X$ be a countably compact set of ordinals. Then $X$ is a non-commutative Corson countably compact if and only if every ordinal of uncountable cofinality is isolated in $X$.
\end{enumerate}
\end{thm}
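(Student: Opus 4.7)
My plan is to treat the three assertions in turn, combining the structural results already established with standard facts about ordinal spaces.

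For (1) I would prove the chain $\eta<\omega_{1}\Rightarrow [0,\eta]$ Corson $\Rightarrow [0,\eta]$ non-commutative Corson $\Rightarrow \eta<\omega_{1}$. The first step is immediate, since $[0,\eta]$ is then a countable compact, hence metrizable, space. The second is a direct consequence of the definitions, as a commutative full retractional skeleton is itself a full retractional skeleton. For the third, Theorem \ref{PropRetr}(ii) combined with Proposition \ref{CaratCountably} yields that every non-commutative Corson countably compact space is Fr\'{e}chet-Urysohn; however for $\eta\geq\omega_{1}$ one has $\omega_{1}\in\overline{[0,\omega_{1})}$ while no sequence from $[0,\omega_{1})$ converges to $\omega_{1}$, so $[0,\eta]$ fails to be Fr\'{e}chet-Urysohn. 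For the weakly non-commutative Corson clause I would first identify $[0,\eta]$, when $\eta$ is a limit ordinal of uncountable cofinality, with the $[0,\eta+1)$-sum of the family of singletons indexed by $I(\eta)$ and apply Lemma \ref{weaklyetasomme}; for arbitrary $\eta$ I would pick a regular uncountable cardinal $\mu\geq\eta$, note that $[0,\eta]$ is closed (hence countably closed) in $[0,\mu]$, and invoke Lemma \ref{OperzWnoncomm}(1).

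For (2) I would fix $\eta$ with $X\subset[0,\eta]$ and show that $X$ is countably closed in $[0,\eta]$. Indeed, given a countable $C\subset X$ and $z\in\overline{C}^{[0,\eta]}\setminus C$, the point $z$ must be a limit ordinal with $C\cap[0,z)$ cofinal in $z$; since $C$ is countable this forces $\mathrm{cf}(z)\leq\omega_{0}$, and so some sequence in $X$ converges to $z$, whence countable compactness of $X$ gives $z\in X$. Thus $X$ is a countably closed subspace of the weakly non-commutative Corson compactum $[0,\eta]$ produced in (1), and Lemma \ref{OperzWnoncomm}(1) finishes the argument.

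For (3), assertion (2) together with Lemma \ref{CarattWeakly} reduces the statement to characterising countable tightness of $X$. If some non-isolated $x\in X$ has uncountable cofinality, set $A=X\cap[0,x)$; then $x\in\overline{A}^{X}$, while any countable $C\subset A$ is bounded by some $\beta<x$, so $x\notin\overline{C}^{X}$ and countable tightness fails. Conversely, if every ordinal of uncountable cofinality in $X$ is isolated, then for $A\subset X$ and $x\in\overline{A}^{X}\setminus A$ the point $x$ is not isolated in $X$ and hence is a limit ordinal with $\mathrm{cf}(x)\leq\omega_{0}$; selecting $\alpha_{n}\nearrow x$ and $a_{n}\in A\cap(\alpha_{n},x)$ produces a sequence in $A$ converging to $x$, so $X$ is even Fr\'{e}chet-Urysohn. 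The main technical obstacle I anticipate lies in the first step of (1): I must carefully verify that $[0,\eta]$, for $\eta$ a limit ordinal of uncountable cofinality, really coincides as a topological space with the $[0,\eta+1)$-sum of singletons indexed by $I(\eta)$ under the conventions fixed before Lemma \ref{etaSomme}. Once this identification is in place the remainder of the proof is routine bookkeeping with Lemma \ref{OperzWnoncomm}, Lemma \ref{CarattWeakly}, and Theorem \ref{PropRetr}.
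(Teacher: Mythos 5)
Your proposal is correct and takes essentially the same route as the paper: part (1) is handled by applying Lemma \ref{weaklyetasomme} (to the $[0,\eta+1)$-sum of singletons) in the uncountable cofinality case and Lemma \ref{OperzWnoncomm} in the remaining cases, part (2) by exhibiting $X$ as a countably closed subspace of a compact ordinal interval and invoking Lemma \ref{OperzWnoncomm}(1), and part (3) by combining part (2) with Lemma \ref{CarattWeakly}. The only difference is that you spell out the verifications the paper leaves implicit (failure of the Fr\'{e}chet-Urysohn property when $\eta\geq\omega_{1}$, countable closedness of $X$, and the equivalence of countable tightness with all points of uncountable cofinality being isolated), all of which are accurate.
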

\begin{proof}
\begin{enumerate}[$(1)$]
\item The first part is trivial. If $\eta$ has uncountable cofinality, the assertion follows by Lemma \ref{weaklyetasomme}. Isolated and countable cofinality cases follow by the uncountable cofinality case and Lemma \ref{OperzWnoncomm}.

\item Let $X$ be a countably compact set of ordinals. Let $\theta=\sup(X) $, and let $Y$ be the closure of $X$ in $[0,\theta]$.\\
Since $Y$ is a well-ordered compact space it is homeomorphic to $[0,\eta]$ for some ordinal $\eta$. By previous point $Y$ is weakly non-commutative Corson. By Lemma \ref{OperzWnoncomm} $X$ is a countably closed subspace of $Y$, hence  $X$ is a weakly non-commutative Corson countably compact space.

\item It follows by previous point and Lemma \ref{CarattWeakly}.

\end{enumerate}
\end{proof}

Using the previous result and Lemma \ref{OperzWnoncomm} we observe that \cite[Example 3.8]{Cuth1} is an example of weakly non-commutative Corson countably compact space that is not a non-commutative Valdivia compact. We observe that, for every set $\Gamma$, the space $[0,1]^{\Gamma}$ is a Valdivia compact space, therefore it is a non-commutative Valdivia compact space. Moreover, there exists $\Gamma$ such that $[0,1]^{\Gamma}$ contains a subspace homeomorphic to a compact space that is not weakly non-commutative Corson. Finally, recalling that the class of weakly non-commutative Corson countably compact spaces is closed under countably closed subspaces (Lemma \ref{OperzWnoncomm}), we have an example of non-commutative Valdivia compact space that is not weakly non-commutative Corson compact space. Thus, the two classes are independent.\\
Now, we give the (weakly) non-commutative Valdivia version of Lemma \ref{etaSomme}. The proof follows straightforwardly by defining the same family of retractions of Lemma \ref{etaSomme}.

\begin{prop}\label{propOrdinal}
Let $\eta$ be an uncountable cofinality ordinal. Any $[0,\eta +1)$-sum of (weakly) non-commutative Valdivia compact is again a (weakly) non-commutative Valdivia compact.
\end{prop}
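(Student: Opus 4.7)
The plan is to imitate the construction of Lemma~\ref{etaSomme} in the compact ambient space $K = [0,\eta+1)$-sum of $\{K_\alpha\}_{\alpha \in I(\eta)}$. For each $\alpha \in I(\eta)$ I would fix a retractional skeleton $\{r_s^\alpha\}_{s \in \Gamma_\alpha}$ on $K_\alpha$ inducing a dense subspace $D_\alpha$, augment each $\Gamma_\alpha$ to $\Gamma_\alpha^{'} = \Gamma_\alpha \cup \{0\}$ with $0$ declared the least element, and let $\Gamma$ be the set of $\gamma \in \prod_{\alpha \in I(\eta)} \Gamma_\alpha^{'}$ with $|S(\gamma)| \leq \aleph_0$, $0 \in S(\gamma)$ and $[\alpha, \alpha+\omega_0) \cap I(\eta) \subset S(\gamma)$ whenever $\alpha \in S(\gamma)$, ordered coordinatewise. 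The retractions $r_\gamma : K \to K$ are defined by exactly the formula of Lemma~\ref{etaSomme}: $r_\gamma(\alpha,x) = (\alpha, r_{\gamma(\alpha)}^{\alpha}(x))$ when $\alpha$ is isolated with $\gamma(\alpha) \neq 0$, and $r_\gamma(\alpha,x) = (\beta_\alpha, \beta_\alpha)$ otherwise, with $\beta_\alpha = \sup([0,\alpha] \cap S(\gamma))$. The block condition on $\Gamma$ forces $\beta_\alpha$ to be either $0$ or a limit ordinal, so the image is a legitimate point of $K$.

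The continuity of $r_\gamma$ and the verification of axioms $(ii)$ and $(iii)$ carry over verbatim from Lemma~\ref{etaSomme} whenever the point considered is isolated or a limit of countable cofinality. The new case is a limit $\alpha$ of uncountable cofinality (in particular $\eta$ itself): since $|S(\gamma)| \leq \aleph_0$, one has $\beta_\alpha < \alpha$, so the basic neighbourhood $B_{\beta_\alpha}((\alpha,\alpha))$ is sent constantly to $(\beta_\alpha, \beta_\alpha)$, giving continuity at $(\alpha,\alpha)$; likewise axiom $(iii)$ at $(\alpha,\alpha)$ reduces to the identity
\[
\sup\Bigl([0,\alpha] \cap \bigcup_{n \in \omega_0} S(\gamma_n)\Bigr) = \sup_{n \in \omega_0} \sup\bigl([0,\alpha] \cap S(\gamma_n)\bigr).
\]
Axiom $(i)$ holds because $r_\gamma[K]$ is a compact subspace indexed by the ordinal-theoretic closure of $S(\gamma)$, still countable, with metrizable coordinates $r^\alpha_{\gamma(\alpha)}[K_\alpha]$ on each isolated slot, so it has a countable network and is metrizable. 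The induced subspace turns out to be
\[
D = \{(\alpha,x) : \alpha \text{ isolated},\ x \in D_\alpha\} \cup \{(\alpha,\alpha) : \alpha \text{ limit of countable cofinality}\},
\]
which is dense in $K$ (the omitted uncountable-cofinality limits are reached by transfinite nets from within $D$), proving $K$ is non-commutative Valdivia.

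For the weakly non-commutative case I would reduce to the above: pick, for each $\alpha$, a non-commutative Valdivia $L_\alpha$ with a continuous surjection $f_\alpha : L_\alpha \to K_\alpha$; let $L$ be the $[0,\eta+1)$-sum of $\{L_\alpha\}$, which is non-commutative Valdivia by what has just been proved; and let $f : L \to K$ act as $f_\alpha$ on isolated slots and as the identity on limit points. Continuity at isolated points is immediate from the clopen structure, continuity at limit points from the neighbourhood base, and surjectivity is clear. The main obstacle throughout is keeping track of three subcases — isolated, countable-cofinality limit, uncountable-cofinality limit — and recognising that the countability of $S(\gamma)$ is what simultaneously forces metrizability of $r_\gamma[K]$ and identifies the proper induced subspace $D \subsetneq K$, a feature absent from Lemma~\ref{etaSomme} where the full retractional skeleton coincides with the whole space.
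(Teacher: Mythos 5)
Your proposal is correct and takes essentially the same route as the paper, which proves the proposition simply by reusing the family of retractions of Lemma~\ref{etaSomme} on the compact $[0,\eta+1)$-sum; your handling of the new uncountable-cofinality limit points, of the metrizability of $r_{\gamma}[K]$, and of the weakly case via continuous images of the $[0,\eta+1)$-sum of the $L_{\alpha}$'s is exactly what is needed. The only step left implicit is axiom $(iv)$ of the skeleton definition, namely $\lim_{\gamma\in\Gamma}r_{\gamma}(x)=x$ at points $(\alpha,\alpha)$ of uncountable cofinality and at points $(\alpha,x)$ with $x\in K_{\alpha}\setminus D_{\alpha}$; this is routine, since $\sup([0,\alpha]\cap S(\gamma))$ can be made larger than any prescribed $\delta<\alpha$ by passing to suitable $\gamma\in\Gamma$, and since each $\{r^{\alpha}_{s}\}_{s\in\Gamma_{\alpha}}$ is a (not necessarily full) retractional skeleton on $K_{\alpha}$, so one uses $\lim_{s}r^{\alpha}_{s}(x)=x$ instead of the fullness used in Lemma~\ref{etaSomme}.
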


Next result gives a characterization of compact ordinal segments in the non-commutative setting.

\begin{thm}\label{equivOrdinal}
Let $\eta$ be an ordinal. Then the following hold
\begin{enumerate}[$(i)$]
\item $[0,\eta]$ is non-commutative Valdivia.
\item $[0,\eta]$ is weakly non-commutative Valdivia.
\item $[0,\eta]$ is weakly non-commutative Corson.
\end{enumerate}
\end{thm}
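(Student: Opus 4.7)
The plan is first to reduce the theorem to a single non-trivial claim. Assertion $(iii)$ is exactly Theorem \ref{teoOrdinali}(1), and $(i)$ implies $(ii)$ at once, because any induced subspace of a non-commutative Valdivia compact is a dense countably compact non-commutative Corson --- hence weakly non-commutative Corson --- subspace. It thus suffices to prove $(i)$: for every ordinal $\eta$ the compactum $[0,\eta]$ carries a retractional skeleton.

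I would argue by transfinite induction on $\eta$. The base case $\eta<\omega_{1}$ is trivial, since $[0,\eta]$ is metrizable and every metrizable compact is Valdivia, hence non-commutative Valdivia. For the inductive step three subcases arise. If $\eta=\alpha+1$ is a successor, then $\{\alpha+1\}$ is an isolated clopen point, so $[0,\alpha+1]=[0,\alpha]\oplus\{\alpha+1\}$; this case can be read off from Proposition \ref{1pointcpt} applied to the one-element family $\{[0,\alpha]\}$, whose one-point compactification is precisely $[0,\alpha+1]$ (the new point being $\alpha+1$, which is isolated since $[0,\alpha]$ is already compact). If $\eta$ is a limit of countable cofinality, pick an increasing sequence $(\alpha_{n})_{n\in\omega_{0}}$ of successor ordinals with $\alpha_{0}=0$ and $\sup_{n}\alpha_{n}=\eta$; each chunk $K_{n}=(\alpha_{n},\alpha_{n+1}]$ is a compact ordinal segment of rank less than $\eta$ and hence non-commutative Valdivia by induction, while $[0,\eta]$ is the one-point compactification of $\bigoplus_{n}K_{n}$ (its neighborhoods at $\eta$ being complements of finite unions of $K_{n}$'s). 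Proposition \ref{1pointcpt} closes this case as well.

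The crucial case is $\eta$ limit of uncountable cofinality $\theta$. I would fix a continuous strictly increasing cofinal sequence $(\xi_{\alpha})_{\alpha\leq\theta}$ in $\eta$ with $\xi_{0}=0$, $\xi_{\theta}=\eta$, and each $\xi_{\alpha+1}$ a successor, then set $X_{0}=\{0\}$ and $X_{\alpha+1}=(\xi_{\alpha},\xi_{\alpha+1}]$ for $\alpha<\theta$. Each $X_{\alpha}$ is a compact ordinal segment of rank strictly below $\eta$ and hence non-commutative Valdivia by the inductive hypothesis. The final step is to identify $[0,\eta]$ with the $[0,\theta+1)$-sum of $\{X_{\alpha}\}_{\alpha\in I(\theta+1)}$ via the bijection $(\alpha+1,x)\mapsto x$ together with $(\beta,\beta)\mapsto\xi_{\beta}$ for every limit $\beta\leq\theta$, and then invoke Proposition \ref{propOrdinal}. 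The main obstacle is precisely this topological identification: one must verify that the order-topology neighborhoods $(\xi_{\gamma'},\xi_{\beta}]$ of $\xi_{\beta}$ in $[0,\eta]$ form a basis equivalent to the sets $B_{\gamma'}((\beta,\beta))\cup\{(\beta,\beta)\}$ in the $[0,\theta+1)$-sum. This is a direct check using the continuity of $(\xi_{\alpha})$, namely that $\sup_{\alpha<\beta}\xi_{\alpha}=\xi_{\beta}$, so that $(\xi_{\gamma'},\xi_{\beta}]$ decomposes exactly as the union of the chunks $X_{\alpha+1}$ for $\gamma'\leq\alpha<\beta$, the intermediate limit points $\xi_{\beta''}$ for limit $\beta''$ with $\gamma'<\beta''<\beta$, and the top $\xi_{\beta}$ itself.
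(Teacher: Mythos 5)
Your argument is correct and follows essentially the route the paper intends: assertion (iii) is Theorem \ref{teoOrdinali}, (i)$\Rightarrow$(ii) is immediate from the properties of induced subspaces, and (i) is obtained by the transfinite induction whose only nontrivial (uncountable-cofinality) step is Proposition \ref{propOrdinal} (with Proposition \ref{1pointcpt} handling the easy cases) --- exactly what the paper's one-line proof compresses. One cosmetic fix: in the countable-cofinality case your chunks $(\alpha_{n},\alpha_{n+1}]$ omit the point $0$, so the one-point compactification of $\bigoplus_{n}K_{n}$ is $(0,\eta]$ rather than $[0,\eta]$; take $K_{0}=[0,\alpha_{1}]$ (or treat the isolated point $0$ separately) before applying Proposition \ref{1pointcpt}.
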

\begin{proof}
It trivially follows by Theorem \ref{teoOrdinali} and Proposition \ref{propOrdinal}.
\end{proof}

\section{Aleksandrov duplicates}

We recall the definition of Aleksandrov duplicate $AD(X)$ of the space $X$. It is the space $X\times\{0,1\}$ with the topology in which all points of $X\times\{1\}$ are isolated, and neighborhoods of $(x,0)$ are $(U\times \{0,1\})\setminus \{(x,1)\}$, for every $U$ neighborhood of $x\in X$. We denote by $\pi$ the projection from $AD(X)$ onto $X$.\\
It is well known and easy to check that if $X$ is compact (countably compact) then $AD(X)$ is compact (countably compact).\\
Now we give the main result of this section about the relations between retractional skeletons and Alexandrov duplicates of topological spaces.
\begin{thm}
\begin{enumerate}[(1)]
\item Let $X$ be a (weakly) non-commutative Corson countably compact space, then $AD(X)$ is a (weakly) non commutative compact space as well.
\item Let $K$ be a non-comutative Valdivia compact space and $D$ be an induced subspace such that $|K\setminus D|<\aleph_{0}$, then $AD(K)$ is a non-commutative Valdivia compact space.
\item Let $K$ be a compact space, if $AD(K)$ is a non-commutative Valdivia compact space then so is $K$.
\item Let $\eta$ be an ordinal, then $AD([0,\eta])$ is a non-commutative Valdivia compact space. 
\end{enumerate}
\end{thm}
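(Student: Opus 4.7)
For part (1), starting from a full retractional skeleton $\{r_s\}_{s\in\Gamma}$ on $X$, the plan is to index a new family by pairs $(s, F)$ with $F \subset r_s[X]$ countable, ordered coordinatewise, and define $R_{(s,F)}(x, 0) = (r_s(x), 0)$ together with $R_{(s,F)}(x, 1) = (x, 1)$ if $x \in F$ and $R_{(s,F)}(x, 1) = (r_s(x), 0)$ otherwise. The constraint $F \subset r_s[X]$ is the linchpin for continuity of $R_{(s,F)}$ at a point $(y, 0)$: any $F$-point clustering at $y$ lies in $r_s[X]$ and is hence fixed by $r_s$, so the potentially awkward value $(z, 1)$ at nearby isolated points still matches a neighborhood of $(r_s(y), 0)$. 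The image $r_s[X] \times \{0\} \cup F \times \{1\}$ is compact and second countable, hence metrizable, and the remaining skeleton axioms and fullness (pick $F \ni x$) are routine. For the weakly version, one lifts any continuous surjection $f \colon Y \to X$ to $AD(Y) \to AD(X)$ via $(y, \epsilon) \mapsto (f(y), \epsilon)$.

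For part (2), the same recipe goes through provided one also takes care of the finitely many points of $K \setminus D$. I would enlarge the index to triples $(s, F, A)$ with $A \subset K \setminus D$, stipulating $R_{(s, F, A)}(x, 1) = (x, 1)$ additionally when $x \in A$. Finiteness of $K \setminus D$ lets one shrink neighborhoods of $(z, 0)$ to avoid all of $A \setminus \{z\}$, preserving continuity, and keeps the image metrizable compact; axiom (iv) then holds at every $(x, 1)$ by placing $x$ in $A$ or $F$ as appropriate.

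For part (3), the key structural fact is that $K \cong K \times \{0\}$ is a closed retract of $AD(K)$ via $\pi'(x, \epsilon) = (x, 0)$. The plan is to exploit the given skeleton $\{R_s\}_{s\in\Gamma}$ on $AD(K)$ together with this retraction. One direction is to apply Lemma \ref{SubspaceRetra} to the closed subset $K \times \{0\}$, noting that $K \times \{1\}$ is contained in the induced subspace $D$ automatically (those points being isolated in $AD(K)$ while $D$ is dense); the main obstacle here is showing that $D \cap (K \times \{0\})$ is dense in $K \times \{0\}$, which is not automatic. An alternative is to pass to a cofinal, $\sigma$-complete subfamily $\Gamma_0 = \{s \in \Gamma : R_s[K \times \{0\}] \subset K \times \{0\}\}$ and restrict each $R_s$ to $K \times \{0\}$, producing a skeleton on $K$ directly; the bottleneck is cofinality of $\Gamma_0$, which I expect to deduce from axiom (iv) combined with the fact that $R_s[AD(K)]$ is metrizable (and so contains at most countably many isolated points), so that excursions of $R_s$ into $K \times \{1\}$ can be absorbed by passing higher in $\Gamma$.

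For part (4), the case $\eta < \omega_1$ is immediate from part (1), since $[0, \eta]$ is then metrizable and hence a (compact) non-commutative Corson countably compact space. For general $\eta$ part (2) does not directly apply, because the set of uncountable-cofinality ordinals in $[0, \eta]$ can be infinite. Instead I would construct the skeleton on $AD([0, \eta])$ directly, indexed by pairs $(F, G)$ with $F \subset [0, \eta]$ countable and closed under countable suprema and $G \subset F$ countable, using the canonical ordinal retractions $r_F(\alpha) = \max\{\beta \in F \cup \{0\} : \beta \leq \alpha\}$ and transplanting the part (1) recipe. The delicate step is axiom (iv) at $(\alpha, 1)$ for $\alpha$ of uncountable cofinality; since $(\alpha, 1)$ is isolated, it suffices that $\alpha$ be placed in $G$ eventually, which the indexing permits by choosing $F$ to contain $\alpha$.
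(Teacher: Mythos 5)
Your constructions for parts (1) and (2) are sound and essentially the route the paper takes (the paper outsources (1) to Garcia-Ferreira--Rojas-Hernandez and obtains (2) by reindexing the skeleton of $D$ over countable sets and extending to $K$; your $(s,F)$ and $(s,F,A)$ indexings work because $F\subset r_s[X]$ and $r_s[X]$ is compact, so every cluster point of $F$ is fixed by $r_s$). The problems are in (3) and (4). For (3), the step you explicitly leave open is the entire content of the statement: one must prove that $D\cap (K\times\{0\})$ is dense in $K\times\{0\}$, and this is done by a concrete argument (following Kalenda's treatment of duplicates): given a nonempty open $U\subset K$, choose open $V$ with $\overline V\subset U$ by complete regularity; if $V$ is finite, its points are isolated in $K$, so the points of $V\times\{0\}$ are isolated in $AD(K)$ and therefore lie in the dense induced set $D$; if $V$ is infinite, then $V\times\{1\}\subset D$ (isolated points), and by Theorem \ref{PropRetr} any countably infinite subset of it has compact metrizable closure contained in $D$, hence a cluster point, which by the duplicate topology must be of the form $(x,0)$ with $x\in\overline V\subset U$. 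Lemma \ref{SubspaceRetra} then finishes. Your alternative via $\Gamma_0=\{s:R_s[K\times\{0\}]\subset K\times\{0\}\}$ is not substantiated: you give no proof that $\Gamma_0$ is nonempty or cofinal, and the hinted mechanism ("absorb excursions by passing higher") is doubtful, since enlarging $s$ to $t$ only gives $R_s=R_sR_t=R_tR_s$, which does not remove values of $R_s$ lying in $K\times\{1\}$.

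Part (4) fails at exactly the point you call delicate. If $\alpha$ has uncountable cofinality and you put $\alpha\in F$ (so that $\alpha\in G$ can be used for axiom $(iv)$ at $(\alpha,1)$), then, $F$ being countable, $\sup(F\cap[0,\alpha))<\alpha$, so $r_F$ is discontinuous at $\alpha$ and hence $R_{(F,G)}$ is discontinuous at $(\alpha,0)$; if instead you keep such $\alpha$ out of $F$, then $G\subset F$ forces $R_{(F,G)}(\alpha,1)=(r_F(\alpha),0)$ for all indices and axiom $(iv)$ fails at $(\alpha,1)$. (A secondary issue: closure under countable suprema alone does not make $r_F$ continuous even at countable-cofinality points; $F=\{0,\omega\}$ already gives a discontinuous $r_F$ --- one needs every limit-ordinal member of $F$ to be a limit of members of $F$, which is the paper's condition that isolated points of $A$ be isolated in $[0,\eta]$, and which automatically excludes ordinals of uncountable cofinality from $A$.) The paper's construction avoids the dilemma by decoupling the two sets: indices are pairs $(A,B)$ with $A\in\mathcal A$ as above and $B$ an arbitrary countable subset of $[0,\eta]$ subject only to $\beta+1\in A$ for every $\beta\in B$, $\beta\neq\eta$; the retraction fixes $B\times\{1\}$ and maps everything else to $(r_A(\cdot),0)$. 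The successor condition yields continuity at points $(x,0)$: if elements of $B$ increase to $x$ then their successors do too, they lie in the closed set $A$, hence $x\in A$ and $r_A(x)=x$; and a point of uncountable cofinality cannot be approached from below by the countable set $B$ at all. Thus $(\alpha,1)$ is eventually fixed without ever putting $\alpha$ into $A$, while $(\alpha,0)$ simply stays outside the induced set --- which is consistent with $AD([0,\eta])$ being non-commutative Valdivia but not non-commutative Corson, and shows why neither part (2) nor your $(F,G)$ scheme can cover the general ordinal case.
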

\begin{proof}
\begin{enumerate}[(1)]
\item Let $X$ be a non-commutative Corson countably compact space, then the assertion follows easily by \cite[Theorem 3.1]{FerreiraRojas}.\\
Let $X$ be a weakly non-commutative Corson countably compact space, then there exists a continuous onto mapping $f:Y\to X$, where $Y$ is a non-commutative Corson countably compact space. Let $AD(X)$ and $AD(Y)$ be the Aleksandrov duplicates of $X$ and $Y$ respectively. Hence, by the previous point, $AD(Y)$ is non-commutative Corson countably compact.
Let $g:AD(Y)\to AD(X)$ be a map defined by $g(x,i)=(f(x),i)$, it is clearly onto. The continuity follows by the continuity of $f$ and the definition of the topology of Aleksandrov duplicate. Therefore $AD(X)$ is a weakly non-commutative Corson countably compact space.
\item Let $K$ be a non-commutative Valdivia compact space. By hypothesis there exists a retractional skeleton $\{r_{s}\}_{s\in\Gamma}$ such that $|K\setminus D|< \aleph_{0}$ and $D=\bigcup_{s\in\Gamma}r_{s}[K]$. We observe that the restriction $\{r_{s}\upharpoonright_{D}\}_{s\in\Gamma}$ is a full retractional skeleton on the countably compact space $D$. Hence using \cite[Proposition 2.7]{FerreiraRojas} there exists a full retractional skeleton on $D$, $\{r_{A}\, :\,A\in[D]^{\leq\omega_{0}}\}$ such that for every $A\in [D]^{\leq\omega_{0}}$ we have $r_{A}(x)=x$ for every $x\in A$. Now, using Theorem \ref{PropRetr} and Proposition \ref{CaratCountably}, we extend every $r_A\in$ $\{r_{A}\, :\,A\in[D]^{\leq\omega_{0}}\}$ to $K$ as $\{R_{A}\, :\,A\in[D]^{\leq\omega_{0}}\}$  such that $R_{A}\upharpoonright_{D}=r_{A}$.
For every $A\in[D\times \{0,1\}]^{\leq \omega_{0}}$ we define $\hat{R}_{A}:AD(K)\to AD(K)$ as follows
\begin{equation*}
\hat{R}_{A}(x,i)=
\begin{cases}
(R_{\pi(A)}(x),i) & \mbox{ if } x\in\pi(A);\\

(x,1) & \mbox{ if } x\in K \setminus D \mbox{ and } i=1;\\
(R_{\pi(A)}(x),0) & \mbox{ otherwise}.
\end{cases}
\end{equation*}
Using the same argument of \cite[Theorem 3.1]{FerreiraRojas} it is possible to prove that the family of retractions $\{\hat{R_{A}}\}_{A\in [D\times \{0,1\}]^{\leq \omega_{0}}}$ is a retractional skeleton on $AD(K)$.
\item We use the same idea as in \cite[Theorem 2.13]{Kalenda3}. Let $D$ an induced subspace of $AD(K)$, we will prove that $D\cap K\times\{0\}$ is dense in $K\times \{0\}$; then using Lemma \ref{SubspaceRetra} we have that $K$ is a non-commutative Valdivia compact space.\\
Let $U$ be a non-empty open subset of $K$. Since $K$ is completely regular there exists a non-empty open subspace $V\subset K$, such that $\overline{V}\subset U$. Two cases are possible:
\begin{enumerate}[(a)]
\item $V$ is finite. Every point of $V$ is a $G_{\delta}$ point of $AD(K)$, hence $V\times\{0\}\subset D$. Hence $U\cap D\neq\emptyset$.
\item $V$ is infinite. $V\times \{1\}\subset D$ is infinite and has a cluster point $(x,i)\in D$, by Theorem \ref{PropRetr}. By definition of Aleksandrov duplicate topology, $i$ must be equal to $0$ and $(x,0)\in\overline{V}\times\{0\} $.  Hence $U\cap D\neq\emptyset$.
\end{enumerate}
By above observation $K$ is a non-commutative Valdivia compact space.

\item Let $\eta$ be an ordinal. Let $\mathcal{A}$ be the family of all closed countable subsets $A$ of $[0,\eta]$ such that $0\in A$ and if $\alpha\in A$ is isolated then it is isolated in $[0,\eta]$. Then the family of mappings $\{r_{A}\}_{A\in\mathcal{A}}$, where $r_{A}:[0,\eta]\to[0,\eta]$ and $r_{A}(\alpha)=\max([0,\alpha]\cap A)$, is a retractional skeleton on $[0,\eta]$, see \cite{Kalenda4} or \cite{KubisMicha}.\\
Let $AD(K)$ be the Aleksandrov duplicate of $K=[0,\eta]$. Let us define
\begin{equation*}
\Gamma=\{(A,B)\in\mathcal{A}\times\mathcal[K]^{\leq\omega_{0}}:\, \forall \beta(\neq\eta)\in \,B,\,\beta+1 \in \, A  \}.
\end{equation*}
Define the following order on $\Gamma$: $(A_{1},B_{1})\leq (A_{2},B_{2})$ if and only if $A_{1}\subset A_{2}$ and $B_{1}\subset B_{2}$. This way $(\Gamma, \leq)$ is an up-directed partially ordered set.\\
For every $\gamma=(A,B)\in\Gamma$ we define $r_{\gamma}:AD(K)\to AD(K)$ as follows
\begin{equation*}
r_{\gamma}(x,i)=\begin{cases}
(r_{A}(x),0) & \mbox{ if } (x,i)\in K\times \{0\}\cup (K\times\{1\}\setminus B\times\{1\})\\
(x,i) & \mbox{ if } (x,i)\in B\times\{1\}
\end{cases}
\end{equation*}
\underline{Claim:} For every $\gamma\in \Gamma$, $r_{\gamma}$ is continuous. Let $\gamma=(A,B)\in \Gamma$ and let $\{(x_{\lambda},i_{\lambda})\}_{\lambda\in\Lambda}$ be a net converging to $(x,i)\in AD(K)$. We can suppose without loss of generality that $x_{\lambda}\leq x$ for every $\lambda \in \Lambda$. Now we study all possible cases
\begin{itemize}
\item if $i=1$ then $\{(x_{\lambda},i_{\lambda})\}_{\lambda\in\Lambda}$ is eventually constant. Hence it follows that $\lim\limits_{\lambda\in\Lambda}r_{\gamma}(x_{\lambda},i)=r_{\gamma}(x,i)$.
\item if $i=0$ then $x_{\lambda}\to x$ in $K$, two cases are possible:
\begin{enumerate}[(a)]
\item  suppose that $(x_{\lambda},i_{\lambda})\notin B\times\{1\}$ for sufficiently large $\lambda$. Then we conclude by continuity of $r_{A}$;
\item suppose that $(x_{\lambda},i_{\lambda})\in B\times\{1\}$ for a cofinal set $\Lambda_{1}\subset \Lambda$. Since $x_{\lambda}\nearrow x$ for $\lambda\in \Lambda_{1}$, we have $x_{\lambda}+1\nearrow x$ as well. Hence we have $x\in A$, then by definition $r_{\gamma}(x,0)=(x,0)=(r_{A}(x),0)$. Finally, since $r_{\gamma}(x_{\lambda},i_{\lambda})$ is equal to $(x_{\lambda},1)$ for $\lambda\in\Lambda_{1}$ and equal to $(r_{A}(x_{\lambda}),0)$ otherwise, we have that $\lim\limits_{\lambda\in\Lambda}r_{\gamma}(x_{\lambda},i_{\lambda})=(x,0)=(r_{A}(x),0)$, then we are done.
\end{enumerate}
\end{itemize}
This proves the claim. It remains to show that $\{r_{\gamma}\}_{\gamma\in\Gamma}$ is a retractional skeleton.
\begin{enumerate}[$(i)$]
\item Since $r_{\gamma}[AD(K)]$ is compact and countable, it is metrizable.
\item Let $\gamma_{1}=(A_{1},B_{1})$, $\gamma_{2}=(A_{2},B_{2})\in\Gamma$ such that $\gamma_{1}\leq\gamma_{2}$. Let $(x,i)\in AD(K)$. Then, if $i=0$ we have $r_{\gamma_{1}}(x,0)=(r_{A_{1}}(x),0)=(r_{A_{1}}\circ r_{A_{2}}(x),0)=r_{\gamma_{1}}\circ r_{\gamma_{2}}(x,0)$ and $r_{\gamma_{2}}\circ r_{\gamma_{1}}(x,0)=(r_{A_{2}}\circ r_{A_{1}}(x),0)=(r_{A_{1}}(x),0)=r_{\gamma_{1}}(x,0)$. If $i=1$ three cases are possible
\begin{enumerate}[(a)]
\item $x\in B_{1}$ then $x\in B_{2}$ as well and the equality is trivial.
\item $x\notin B_{1}$ and $x\in B_{2}$ then we have $r_{\gamma_{2}}\circ r_{\gamma_{1}}(x,1)=(r_{A_{2}}\circ r_{A_{1}}(x),0)=(r_{A_{1}}(x),0)=r_{\gamma_{1}}(x,1)$ and $r_{\gamma_{1}}\circ r_{\gamma_{2}}(x,1)=r_{\gamma_{1}}(x,1)$.
\item $x\notin B_{2}$, then $r_{\gamma_{1}}(x,1)=(r_{A_{1}}(x),0)=(r_{A_{1}}\circ r_{A_{2}}(x),0)=r_{\gamma_{1}}\circ r_{\gamma_{2}}(x,1)$ and $r_{\gamma_{2}}\circ r_{\gamma_{1}}(x,1)=(r_{A_{2}}\circ r_{A_{1}}(x),0)=(r_{A_{1}}(x),0)=r_{\gamma_{1}}(x,1)$.
\end{enumerate}
\item Let $\gamma_{1}\leq\gamma_{2}\leq...$ with $\gamma_{n}=(A_{n},B_{n})$ for every $n<\omega_{0}$ and $\gamma=\sup_{n<\omega_{0}}(A_{n},B_{n})=(\overline{\bigcup_{n<\omega_{0}}A_{n}},\bigcup_{n<\omega_{0}}B_{n})$.\\
We observe that 
\begin{itemize}
\item $|\overline{\bigcup_{n<\omega_{0}}A_{n}}|\leq\aleph_{0}$, 
\item $0\in\overline{\bigcup_{n<\omega_{0}}A_{n}}$
\item every isolated point of $\overline{\bigcup_{n<\omega_{0}}A_{n}}$ belongs to some $A_{k}$, hence it is isolated also in $K$.
\end{itemize}
Moreover $|\bigcup_{n<\omega_{0}}B_{n}|\leq \aleph_{0}$ and for every $\beta\in B$ there exists $k\in\omega_{0}$ such that $\beta\in B_k$, hence $\beta+1\in A_{k}\subset\overline{\bigcup_{n<\omega_{0}}A_{n}}$. Therefore $\gamma\in \Gamma$.\\
Let $(x,i)\in AD(K)$, if $i=0$ or $i=1$ and $x\notin B_{n}$ for every $n\in\omega_{0}$ we have
\begin{equation*}
\lim\limits_{n\in\omega_{0}}r_{\gamma_{n}}(x,i)=\lim\limits_{n\in\omega_{0}}(r_{A_{n}}x,0)=r_{\gamma}(x,i).
\end{equation*}  
If $i=1$ and $x\in B_{n_{0}}$ for some $n_{0}<\omega_{0}$ then
\begin{equation*}
\lim\limits_{n\in\omega_{0}}r_{\gamma_{n}}(x,1)=\lim\limits_{n_{0}<n}(x,1)=r_{\gamma}(x,1).
\end{equation*}
\item Let $(x,i)\in AD(K)$, if $i=0$  then $$\lim_{\gamma\in\Gamma}r_{\gamma}(x,0)=\lim_{A\in\mathcal{A}}(r_{A}(x),0)=(x,0).$$ If $i=1$ there exists a $B\in [K]^{\leq\omega_{0}}$ such that $x\in B$ then $$\lim_{\gamma\in\Gamma}r_{\gamma}(x,1)=(x,1).$$
\end{enumerate}
Hence $\{r_{\gamma}\}_{\gamma\in\Gamma}$ is a retractional skeleton on $AD(K)$.

\end{enumerate}
This completes the proof.
\end{proof}

\section{Associated Banach spaces}

In this section we generalize Section 4 of \cite{Kalenda1} to the non-commutative case. Though the ideas, of following proofs, are the same of \cite{Kalenda1} we prefer to give every result because they are interesting in the non-separable Banach space theory setting. 

\begin{prop}\label{Banach1porjEquiv}
Let $X$ be a Banach space, then the following assertions are equivalent.
\begin{enumerate}[($i$)]
\item $X$ is linearly isometric to a subspace of a Banach space $Y$, that has a $1$-Projectional skeleton.
\item $(B_{X^{*}},w^{*})$ contains a dense convex symmetric weakly non-commutative Corson countably compact space.
\item $(B_{X^{*}},w^{*})$ is a weakly non-commutative Valdivia space.
\end{enumerate}
\end{prop}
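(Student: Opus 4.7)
My plan is to prove the cyclic implications $(i) \Rightarrow (ii) \Rightarrow (iii) \Rightarrow (i)$. The implication $(ii) \Rightarrow (iii)$ is immediate from the definition of weakly non-commutative Valdivia compact. The other two leverage the standard duality between 1-projectional skeletons on a Banach space $Y$ and retractional skeletons on $(B_{Y^*}, w^*)$, which has already been exploited in the proof of Proposition \ref{supportoNumerabile}.

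For $(i) \Rightarrow (ii)$: fix a 1-projectional skeleton $\{P_s\}_{s \in \Gamma}$ on $Y \supset X$ (isometrically). Mimicking the proof of Proposition \ref{supportoNumerabile}, the set $D_Y := \bigl(\bigcup_{s \in \Gamma} P_s^*(Y^*)\bigr) \cap B_{Y^*}$ is induced by a retractional skeleton on $(B_{Y^*}, w^*)$, hence is a dense non-commutative Corson countably compact subspace; moreover it is convex and symmetric because $\bigcup_s P_s^*(Y^*)$ is a linear subspace of $Y^*$, being a directed union of such (since $s \le t$ implies $P_s^*(Y^*) \subset P_t^*(Y^*)$). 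By the Hahn-Banach theorem the restriction map $r\colon (B_{Y^*}, w^*) \to (B_{X^*}, w^*)$, $y^* \mapsto y^*\!\upharpoonright_X$, is a $w^*$-continuous surjection; its linearity, continuity and surjectivity transfer density, convexity, symmetry and countable compactness from $D_Y$ to $D := r(D_Y)$, while the latter is a continuous image of a non-commutative Corson countably compact space and is therefore weakly non-commutative Corson countably compact.

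For $(iii) \Rightarrow (i)$: by the first proposition of Section 3 we may write $(B_{X^*}, w^*) = \phi(L)$ for some non-commutative Valdivia compact $L$ and continuous surjection $\phi\colon L \to (B_{X^*}, w^*)$. The canonical evaluation $X \hookrightarrow C(B_{X^*})$ is an isometric embedding by Hahn-Banach, and the map $f \mapsto f \circ \phi$ isometrically embeds $C(B_{X^*})$ into $C(L)$ (the isometry uses surjectivity of $\phi$). Composing, $X$ embeds isometrically into $Y := C(L)$, which carries a 1-projectional skeleton by \cite[Proposition 28]{Kubis1}. The only real bookkeeping is in $(i) \Rightarrow (ii)$, where five properties (density, convexity, symmetry, countable compactness, and the weakly non-commutative Corson property) must be checked simultaneously for $D$; these are not genuine obstacles but all drop out uniformly from the linear structure of $\bigcup_s P_s^*(Y^*)$ combined with the linearity and surjectivity of the restriction map.
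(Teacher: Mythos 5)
Your proposal is correct and follows essentially the same route as the paper: $(i)\Rightarrow(ii)$ by pushing a convex symmetric induced set of $(B_{Y^*},w^*)$ forward through the restriction (adjoint) map, $(ii)\Rightarrow(iii)$ by definition, and $(iii)\Rightarrow(i)$ by embedding $X\hookrightarrow C(B_{X^*})\hookrightarrow C(L)$ with $C(L)$ carrying a $1$-projectional skeleton. The only difference is cosmetic: the paper cites C\'uth's Proposition 3.14 for the convex symmetric induced set, whereas you reconstruct it directly from the adjoints $P_s^*$, which is exactly the content of that citation.
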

\begin{proof}
$(i)\Rightarrow(ii)$ Let $i$ be the isometric injection of $X$ into $Y$ and $i^{*}$ be the adjoint surjection of $Y^{*}$ onto $X^{*}$.\\
By \cite[Proposition 3.14]{Cuth1} there exists a convex symmetric set $R$ induced by a retractional skeleton in $(B_{Y^{*}},w^{*})$. Hence by Theorem \ref{PropRetr} $R$ is a dense non-commutative Corson countably compact space.\\
Since $i^{*}$ is a linear $w^{*}$-$w^{*}$-continuous mapping and $i^{*}(B_{Y^{*}})=B_{X^{*}}$, we have that $i^{*}(R)$ is a convex symmetric dense weakly non-commutative Corson countably compact space.\\
$(ii)\Rightarrow(iii)$ Easily follows by definition of weakly non-commutative Valdivia compact space.\\
$(iii)\Rightarrow(i)$ Suppose $(B_{X^{*}},w^{*})$ is weakly non-commutative Valdivia. Then there exist a non-commutative Valdivia compact space $K$ and a continuous onto mapping $T:K\to(B_{X^{*}},w^{*})$. By \cite[Proposition 3.15]{Cuth1} $C(K)$ has a $1$-Projectional skeleton. Moreover, using the adjoint map of $T$ we have that $C(B_{X^{*}},w^{*})$ is an isometric subspace of $C(K)$. Observing that, by Hahn-Banach extension theorem, we have $X\hookrightarrow C(B_{X^{*}},w^{*})$, $X\hookrightarrow C(K)$ isometrically.
\end{proof}

\begin{defn}
Let $X$ be a Banach space, we will call it \textit{non-commutative weakly WLD} if and only if $(B_{X^{*}},w^{*})$ is a weakly non-commutative Corson compact.
\end{defn}

\begin{prop}
The class of non-commutative weakly WLD Banach spaces is closed under isomorphisms, subspaces and quotients.
\end{prop}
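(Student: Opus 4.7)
The plan is to reduce each of the three stability properties to the closure properties of weakly non-commutative Corson countably compact spaces established in Lemma \ref{OperzWnoncomm}, specifically closure under continuous images (item (3)) and under countably closed subspaces (item (1)). The only additional observation required is that every $w^{*}$-closed subset of a compact Hausdorff space is automatically countably closed, so item (1) of Lemma \ref{OperzWnoncomm} applies to arbitrary $w^{*}$-closed subsets of $(B_{X^{*}},w^{*})$. Everything else is a routine translation through dual-space operations (restriction maps, adjoint embeddings, scalings), so no serious obstacle is expected.

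For the \textbf{subspace} case, let $Y\subseteq X$ be a closed subspace and assume $(B_{X^{*}},w^{*})$ is weakly non-commutative Corson. The restriction map $R\colon X^{*}\to Y^{*}$ is linear and $w^{*}$-$w^{*}$ continuous, and by the Hahn--Banach theorem $R(B_{X^{*}})=B_{Y^{*}}$. Thus $(B_{Y^{*}},w^{*})$ is a continuous image of the weakly non-commutative Corson compact $(B_{X^{*}},w^{*})$, and Lemma \ref{OperzWnoncomm}(3) gives the conclusion.

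For the \textbf{quotient} case, let $Y\subseteq X$ be closed and $Q\colon X\to X/Y$ the quotient projection. Its adjoint $Q^{*}\colon(X/Y)^{*}\to X^{*}$ is an isometry onto $Y^{\perp}$ and is $w^{*}$-$w^{*}$ continuous; in particular it provides a $w^{*}$-homeomorphism of $(B_{(X/Y)^{*}},w^{*})$ onto $(Y^{\perp}\cap B_{X^{*}},w^{*})$, which is a $w^{*}$-closed (hence countably closed) subspace of $(B_{X^{*}},w^{*})$. Invoking Lemma \ref{OperzWnoncomm}(1) yields that $(B_{(X/Y)^{*}},w^{*})$ is weakly non-commutative Corson.

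For the \textbf{isomorphism} case, let $T\colon X\to Y$ be a Banach space isomorphism and assume $X$ is non-commutative weakly WLD. Then $T^{*}\colon Y^{*}\to X^{*}$ is a $w^{*}$-$w^{*}$ homeomorphism, and $T^{*}(B_{Y^{*}})$ is a $w^{*}$-closed symmetric convex subset of $\|T\|\cdot B_{X^{*}}$. Since scalar multiplication by $\|T\|$ is a $w^{*}$-$w^{*}$ homeomorphism of $X^{*}$, the set $\|T\|\cdot B_{X^{*}}$ is $w^{*}$-homeomorphic to $(B_{X^{*}},w^{*})$ and is therefore weakly non-commutative Corson. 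Consequently $T^{*}(B_{Y^{*}})$ is a $w^{*}$-closed (hence countably closed) subspace of a weakly non-commutative Corson compact, and a final application of Lemma \ref{OperzWnoncomm}(1) shows that $(B_{Y^{*}},w^{*})\cong(T^{*}(B_{Y^{*}}),w^{*})$ is weakly non-commutative Corson, as required.
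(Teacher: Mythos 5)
Your proof is correct and follows essentially the same route as the paper: subspaces via $i^{*}(B_{X^{*}})=B_{Y^{*}}$ and closure under continuous images, quotients via the canonical $w^{*}$-identification of $B_{(X/Y)^{*}}$ with the $w^{*}$-closed set $Y^{\perp}\cap B_{X^{*}}$, and isomorphisms via the adjoint together with a scaling of the dual ball, each time invoking Lemma \ref{OperzWnoncomm}. The only cosmetic difference is the direction of the isomorphism (the paper works with $(T^{*})^{-1}(B_{Y^{*}})$ inside $\lambda B_{X^{*}}$, you with $T^{*}(B_{Y^{*}})$ inside $\|T\|B_{X^{*}}$), which is the same argument up to relabeling.
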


\begin{proof}
Let $X$ be a non-commutative weakly WLD Banach space and the map $T:Y\to X$ be an isomorphism. Using Lemma \ref{OperzWnoncomm} and observing that $(T^{*})^{-1}(B_{Y^{*}})$ is a weak$^{*}$-closed subspace of $\lambda B_{X^{*}}$, for some $\lambda \in \R^{+}$, we have that $B_{Y^{*}}$ is a weakly non-commutative Corson compact space. Therefore $Y$ is a non-commutative weakly WLD Banach space.\\
Let $X$ be a non-commutative weakly WLD Banach space and $Y$ be a closed subspace of $X$. Let $i:Y\to X$ be the canonical embedding then $i^{*}(B_{X^{*}})= B_{Y^{*}}$. Using Lemma \ref{OperzWnoncomm} we have that $B_{Y^{*}}$ is a weakly non-commutative Corson compact. Therefore $Y$ is a non-commutative weakly WLD Banach space.\\
Let $X$ be a non-commutative weakly WLD Banach space and $Y$ be a closed subspace. Since $(X/Y)^{*}$ is canonically isometric and weak$^{*}$ homeomorphic to $Y^{\perp}$ and $Y^{\perp}$ is a weak$^{*}$-closed subspace of $X^{*}$, we have by Lemma \ref{OperzWnoncomm} that $B_{(X/Y)^{*}}$ is weakly non-commutative Corson. Therefore $X/Y$ is a non-commutative weakly WLD Banach space.
\end{proof}

\begin{prop}\label{BanachNo1proj}
Let $X$ be a Banach space such that $X^{*}$ contains a convex weak$^{*}$ -compact subset that is not weakly non-commutative Valdivia. Then there is an equivalent norm on $X\times \R$ such that it cannot be linearly isometric to any subspace of $Y$, where $Y$ is any Banach space with 1-Projectional skeleton.
\end{prop}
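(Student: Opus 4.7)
The plan is to exhibit an equivalent norm on $X\times \R$ whose dual unit ball is not weakly non-commutative Valdivia; the conclusion then follows from Proposition \ref{Banach1porjEquiv} by contrapositive. The key idea is to place $C$ (up to translation) as the ``top slice'' of a symmetric convex weak$^{*}$-compact body in $(X\times\R)^{*}=X^{*}\times\R$ in such a way that it is witnessed by a closed $G_{\delta}$ subset.

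First, I would observe that by translating $C$ by $-c_{0}$ for some $c_{0}\in C$ (a weak$^{*}$-homeomorphism of $X^{*}$ preserving the hypothesis), we may assume $0\in C$. Fix $r>0$ and set
\begin{equation*}
D := \overline{\operatorname{conv}}^{w^{*}}\!\bigl((C\times\{1\})\cup((-C)\times\{-1\})\cup(rB_{X^{*}}\times\{0\})\bigr)\subset X^{*}\times\R.
\end{equation*}
Then $D$ is convex, symmetric, weak$^{*}$-compact. Since $0\in C$, the points $(0,\pm 1)$ lie in $D$, which together with $rB_{X^{*}}\times\{0\}\subset D$ shows that $D$ absorbs a neighbourhood of $0$ in $(X^{*}\times\R,\|\cdot\|_{\text{orig}})$; being also norm-bounded, $D$ is the dual unit ball of a norm $\|\cdot\|_{D}$ on $X\times\R$ equivalent to the product norm.

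The heart of the argument is the identification
\begin{equation*}
D\cap(X^{*}\times\{1\})=C\times\{1\}.
\end{equation*}
The inclusion $\supseteq$ is clear. For $\subseteq$, take $(x^{*},1)\in D$ as a weak$^{*}$-limit of $\sum_{i}a_{i}^{(n)}(x_{i}^{*(n)},t_{i}^{(n)})$ with $t_{i}^{(n)}\in\{-1,0,1\}$. Splitting the weights $A_{n},B_{n},C_{n}$ onto $t=1,0,-1$ respectively and using $A_{n}-C_{n}\to 1$ with $A_{n}+B_{n}+C_{n}=1$, one gets $B_{n},C_{n}\to 0$ and $A_{n}\to 1$. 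Since the generators are norm-bounded, the $B_{n}$ and $C_{n}$ contributions vanish in norm, so the $t=1$ partial sums $A_{n}c_{n}$ (with $c_{n}\in C$ after renormalisation) converge weak$^{*}$ to $x^{*}$, and weak$^{*}$-closedness of $C$ gives $x^{*}\in C$. Now $X^{*}\times\{1\}=\bigcap_{n}X^{*}\times(1-\tfrac{1}{n},1+\tfrac{1}{n})$ is a $G_{\delta}$ of $X^{*}\times\R$, so $C\times\{1\}$ is a closed $G_{\delta}$ subset of $D$.

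Suppose, for contradiction, that $D$ is weakly non-commutative Valdivia. Applying the last statement of Proposition \ref{operazWNVald} to the single-element family $\{C\times\{1\}\}$ (whose union is already closed), we conclude that $C\times\{1\}$, and hence $C$ itself, is weakly non-commutative Valdivia, contradicting the hypothesis. Therefore $D$ is not weakly non-commutative Valdivia, and by Proposition \ref{Banach1porjEquiv} the space $(X\times\R,\|\cdot\|_{D})$ cannot be linearly isometric to any subspace of a Banach space with a $1$-projectional skeleton. The main technical point is the slice identification in the third paragraph; everything else is a routine verification that the set $D$ is indeed an equivalent dual ball.
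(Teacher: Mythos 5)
Your proof is correct and follows essentially the same route as the paper: you build a symmetric convex weak$^{*}$-compact body in $X^{*}\times\R$ whose top slice at height $1$ is (a homeomorphic copy of) the bad set $C$, note that this slice is a closed relative $G_{\delta}$, and then combine the last part of Proposition \ref{operazWNVald} with Proposition \ref{Banach1porjEquiv}. The only differences are cosmetic: you translate $C$ to contain $0$ and use $(0,\pm1)$ together with $rB_{X^{*}}\times\{0\}$ to make the body an equivalent dual ball, where the paper instead includes the slab $\tfrac12 B_{X^{*}}\times[-\tfrac12,\tfrac12]$, and you spell out the slice identification that the paper leaves implicit.
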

\begin{proof}
Let $K\subset X^{*}$ be a convex weak$^{*}$ compact that is not weakly non-commutative Valdivia. Put
\begin{equation*}
B=\mbox{conv}((\frac{1}{2}B_{X^{*}}\times [-\frac{1}{2},\frac{1}{2}])\cup(K\times \{1\})\cup(-K\times \{1\}) ).
\end{equation*}
Since $(\frac{1}{2}B_{X^{*}}\times [-\frac{1}{2},\frac{1}{2}])\subset B\subset (B_{X^{*}}\times [-1,1])$, we have that $B$ is a dual unit ball of $X\times \R$. Furthermore, since $K\times \{1\}=\bigcap_{n\in\omega_{0}}K\times(1-\frac{1}{n},1] $, it is a weak$^{*}$ closed weak$^{*}$ G$_{\delta}$ subset of $B$. Suppose by contradiction that $B$ is weakly non-commutative Valdivia. By Proposition \ref{operazWNVald} $K$ must be a weakly non-commutative Valdivia compact space, which is a contradiction. Then the statement follows by Proposition \ref{Banach1porjEquiv}.
\end{proof}

We observe that it is possible to prove the previous result in the complex setting, we define
\begin{equation*}
B=\mbox{conv}( \frac{1}{2}B_{X^{*}}\times \overline{ D(0,1/2)}\cup \overline{\mbox{conv}\bigcup_{|\alpha|=1} \{\alpha K\times \{\alpha\}\}}^{w^{*}} ),
\end{equation*}
where $D(0,1/2)$ is the complex disk with centre in the origin and radius equal to $1/2$. Then the proof follows using the same construction of \cite[Theorem 4]{Kalenda5}.

\begin{cor}
Let $K$ be a compact space such that there exists a closed subset $L\subset K$ with a dense set of (relatively) $G_{\delta}$ points which is not weakly non-commutative Valdivia. Then there exists an equivalent norm on $C(K)$ such that it cannot be linearly isometric to any subspace of $Y$, where $Y$ is any Banach space with 1-Projectional skeleton.
\end{cor}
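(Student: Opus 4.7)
The plan is to apply Proposition \ref{BanachNo1proj} with $X=C(K)$, for which I need to produce a convex weak$^{*}$-compact subset of $C(K)^{*}$ that fails to be weakly non-commutative Valdivia; this subset will be manufactured out of $L$.

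First, since $K$ is compact Hausdorff (hence normal) and $L\subset K$ is closed, the Tietze extension theorem makes the restriction map $r\colon C(K)\to C(L)$ a surjection. Its adjoint $r^{*}\colon (C(L)^{*},w^{*})\to (C(K)^{*},w^{*})$ is therefore a linear isometric weak$^{*}$-to-weak$^{*}$ homeomorphic embedding, so $r^{*}(B_{C(L)^{*}})$ is a convex weak$^{*}$-compact subset of $C(K)^{*}$ which is weak$^{*}$-homeomorphic to $(B_{C(L)^{*}},w^{*})$. Now, because $L$ carries a dense set of (relatively) $G_{\delta}$ points, the additional implication $(4)\Rightarrow(1)$ of Theorem \ref{TeoremaMisurediradon} is available and all four conditions there are equivalent for $L$. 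The hypothesis that $L$ is not weakly non-commutative Valdivia therefore forces $(B_{C(L)^{*}},w^{*})$, and hence its isometric copy $r^{*}(B_{C(L)^{*}})$ inside $C(K)^{*}$, not to be weakly non-commutative Valdivia either.

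At this point Proposition \ref{BanachNo1proj} supplies an equivalent norm $\vertiii{\cdot}$ on $C(K)\times\R$ which admits no linear isometric injection into any Banach space with a $1$-Projectional skeleton. To transfer the conclusion from $C(K)\times\R$ to $C(K)$ itself, I observe that $L$ cannot be metrizable (a metrizable compact space is Corson, hence non-commutative Valdivia, and in particular weakly non-commutative Valdivia), so $L$, and therefore $K$, is infinite. For infinite compact $K$ the Banach space $C(K)$ is isomorphic to $C(K)\oplus\R$ by the standard hyperplane argument for $C(K)$-spaces, and pulling $\vertiii{\cdot}$ back along any such isomorphism produces an equivalent norm on $C(K)$ with the same isometric obstruction. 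The only delicate point is the step where Theorem \ref{TeoremaMisurediradon} is used to transfer the failure of being weakly non-commutative Valdivia from the topological space $L$ to the dual unit ball; the remainder of the argument is essentially bookkeeping on top of results already established in the paper.
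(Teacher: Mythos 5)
Your core argument coincides with the paper's: use Theorem \ref{TeoremaMisurediradon} (whose extra implication is exactly what the hypothesis of a dense set of relatively $G_{\delta}$ points of $L$ buys) to produce a convex weak$^*$-compact subset of $C(K)^{*}$ that is not weakly non-commutative Valdivia, and then invoke Proposition \ref{BanachNo1proj}. That you realize this subset as $r^{*}(B_{C(L)^{*}})$ via the restriction map and Tietze, while the paper uses $P(L)$ sitting canonically inside $P(K)$, is an immaterial difference.

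The genuine gap is your last step. Proposition \ref{BanachNo1proj} produces a norm on $C(K)\times\R$, and you pass to $C(K)$ by asserting that $C(K)\cong C(K)\oplus\R$ for every infinite compact $K$ ``by the standard hyperplane argument for $C(K)$-spaces''. No such standard argument exists: whether an infinite-dimensional $C(K)$ is isomorphic to its hyperplanes is precisely the hyperplane problem for $C(K)$-spaces, and it has a negative answer in general (the Koszmider--Plebanek compacta with few operators give infinite $K$ with $C(K)$ not isomorphic to $C(K)\oplus\R$). So, as written, this step rests on a false general principle — admittedly the paper's own three-line proof is silent about the passage from $C(K)\times\R$ to $C(K)$, but silence is easier to repair than a wrong claim. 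The clean fix inside your own scheme is to apply Proposition \ref{BanachNo1proj} not to $X=C(K)$ but to the hyperplane $Z=\{f\in C(K):\,f(x_{0})=0\}$ for a fixed $x_{0}\in K$: this hyperplane is always $1$-complemented, since $C(K)=Z\oplus\R\mathbf{1}$, so $C(K)\cong Z\times\R$ isomorphically for every compact $K$; and $Z^{*}\cong C(K)^{*}/\R\delta_{x_{0}}$ still contains a convex weak$^*$-compact copy of $P(L)$, because the (weak$^*$-continuous, linear) quotient map is injective on $P(L)$ — a difference of two probability measures annihilates the constant function, hence cannot equal a nonzero multiple of $\delta_{x_{0}}$. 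Since being linearly isometric to a subspace of a space with a $1$-projectional skeleton depends only on the norm, transporting the resulting norm from $Z\times\R$ to $C(K)$ along the isomorphism completes the proof.
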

\begin{proof}
By Theorem \ref{TeoremaMisurediradon} $P(L)$ is not weakly non-commutative Valdivia. Since $P(L)$ can be identified with a convex weak$ ^{*} $ compact subset of $P(K)$. Then the statement follows by Proposition \ref{BanachNo1proj}.
\end{proof}

\begin{prop}
Let $X,Y$ be Banach spaces. Suppose that $Y$ has a $1$-projectional skeleton, the norm on $X$ is G\^{a}teaux smooth and $X$ is isometric to some subspace of $Y$. Then $X$ is non-commutative weakly WLD. 
\end{prop}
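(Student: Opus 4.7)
The plan is to prove that the dense, convex, symmetric, weakly non-commutative Corson countably compact subspace $D\subset(B_{X^*},w^*)$ supplied by Proposition \ref{Banach1porjEquiv} is in fact the whole of $B_{X^*}$. Since $(B_{X^*},w^*)$ is compact, the equality $D=B_{X^*}$ exhibits $(B_{X^*},w^*)$ itself as weakly non-commutative Corson, which is exactly the definition of $X$ being non-commutative weakly WLD. Concretely, let $i\colon X\hookrightarrow Y$ be the isometric embedding, let $R\subset B_{Y^*}$ be the convex symmetric subset induced by the retractional skeleton on $(B_{Y^*},w^*)$ dual to the $1$-projectional skeleton of $Y$, and set $D=i^*(R)$. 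Then $D$ is convex, symmetric, $w^*$-dense in $B_{X^*}$, and, as a continuous image of the countably compact $R$, is itself $w^*$-countably compact.

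The central step is to show that $J(x)\in D$ for every $x\in S_X$, where $J\colon S_X\to S_{X^*}$ is the (single-valued, by G\^ateaux smoothness) duality map of $X$. Fix $x\in S_X$ and choose a Hahn--Banach extension $y^*\in B_{Y^*}$ of $J(x)$, so $y^*(i(x))=1$. As $R$ is $w^*$-dense in $B_{Y^*}$, the basic $w^*$-neighbourhood $\{z^*\in B_{Y^*}:|z^*(i(x))-1|<1/n\}$ of $y^*$ meets $R$; picking $y_n^*$ in the intersection yields $y_n^*\in R$ with $y_n^*(i(x))\to 1$. Consequently $i^*(y_n^*)\in D$ and $i^*(y_n^*)(x)=y_n^*(i(x))\to 1$, so by \v{S}mulyan's criterion applied to G\^ateaux smoothness at $x$ the sequence $(i^*(y_n^*))$ converges in $w^*$ to $J(x)$. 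Countable compactness of $D$ gives this sequence a $w^*$-cluster point in $D$; since $(B_{X^*},w^*)$ is Hausdorff and the sequence $w^*$-converges to $J(x)$, that cluster point must equal $J(x)$, and hence $J(x)\in D$.

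It remains to upgrade this to $D=B_{X^*}$ by density and convexity. By the Bishop--Phelps theorem the set $J(S_X)$ of norm-attaining functionals of norm one is $\|\cdot\|$-dense in $S_{X^*}$, so every $x^*\in S_{X^*}$ is the $w^*$-limit of a sequence in $J(S_X)\subset D$; repeating the Hausdorff/countable-compactness argument places $x^*$ in $D$. Finally, for $x^*\in B_{X^*}$ with $\|x^*\|<1$, using that $0\in D$ (convexity plus symmetry of $D$), one writes $x^*=\|x^*\|\cdot(x^*/\|x^*\|)+(1-\|x^*\|)\cdot 0$ and concludes $x^*\in D$ by convexity. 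The main obstacle is precisely the transition from $y^*\in B_{Y^*}$ to $J(x)\in B_{X^*}$: $R$ need not be sequentially dense in $B_{Y^*}$, so we cannot approach $y^*$ itself by a sequence from $R$; however, \v{S}mulyan's lemma needs only sequential control of the scalar values at the single vector $i(x)$, and that is exactly what plain $w^*$-density of $R$ together with G\^ateaux smoothness of the norm on $X$ deliver.
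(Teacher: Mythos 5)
Your proof is correct, and it follows the paper's overall outline (obtain the dense convex symmetric weakly non-commutative Corson countably compact set $D\subset B_{X^*}$ from Proposition \ref{Banach1porjEquiv}, use G\^{a}teaux smoothness to place the norm-attaining unit functionals in $D$, invoke Bishop--Phelps, then fill out the whole ball), but two sub-steps are handled by genuinely different means. For the key step the paper works entirely inside $B_{X^*}$: G\^{a}teaux smoothness makes each norm-attaining $x^*\in S_{X^*}$ a weak$^*$ $G_\delta$ point of $B_{X^*}$, and such points belong to any dense countably compact subset, hence to $D$; you instead work upstairs in $B_{Y^*}$, using weak$^*$-density of the induced set $R$ to get $y_n^*\in R$ with $y_n^*(i(x))\to 1$, push down by $i^*$, and combine the sequential \v{S}mulyan criterion with countable compactness of $D$ to conclude $J(x)\in D$. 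Both arguments are valid; yours makes explicit the sequential mechanism that the paper compresses into ``hence it belongs to $D$''. For the last step (from $S_{X^*}\subset D$ to $B_{X^*}\subset D$) the paper appeals to a corollary of the Josefson--Nissenzweig theorem (every element of $B_{X^*}$ is a weak$^*$ limit of a sequence of norm-one functionals), whereas you exploit the convexity and symmetry of $D$ supplied by Proposition \ref{Banach1porjEquiv}: $0\in D$ and any $x^*$ with $0<\|x^*\|<1$ is a convex combination of $0$ and $x^*/\|x^*\|\in S_{X^*}\subset D$. Your route is more elementary and also sidesteps the (harmless, since the finite-dimensional case is trivial, but unstated) infinite-dimensionality hypothesis behind Josefson--Nissenzweig.
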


\begin{proof}
Let $D$ be a dense weakly non-commutative Corson countably compact subspace of $B_{X^{*}}$. Let $x^{*}\in S_{X^{*}}$ a functional that attains its norm at some point $x_{0}\in S_{X}$. Let $\{x_{n}\}_{n\in\omega_{0}}\subset S_{X}$ such that $x^{*}(x_{n})>(1-1/n)$ for every $n\in\omega_{0}$ and $x_{n}\to x$ in norm. Then $x^{*}\in(\bigcap\limits_{n\in\omega_{0}}[\{y^{*}\in X^{*}:\, y^{*}(x_{n})>1-1/n\}\cap B_{X^{*}}])$. Since the norm on $X$ is G\^{a}teaux differentiable in $x_{0}$ then 
\begin{equation*}
x^{*}=(\bigcap\limits_{n\in\omega_{0}}[\{y^{*}\in X^{*}:\, y^{*}(x_{n})>1-1/n\}\cap B_{X^{*}}]),
\end{equation*}
therefore $x^{*}$ is a weak$^{*}$ $G_{\delta}$ point of $B_{X^{*}}$. Hence it belongs to $D$. By the Bishop-Phelps theorem the subset of norm-attaining functionals is norm dense in $S_{X^{*}}$. Since $D$ is closed with respect to limits of sequences we have that $S_{X^{*}}\subset D$. Using the corollary of Josefson-Nissenzweig theorem we have that $B_{X^{*}}\subset D$. Hence we are done.
\end{proof}

\begin{prop}
Let $K$ be a weakly non-commutative Corson compact space with property $(M)$. If $L$ is a compact space such that $C(L)$ is isomorphic to $C(K)$, then $L$ is weakly non-commutative Corson.
\end{prop}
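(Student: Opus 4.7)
The plan is to apply Theorem \ref{TeoPropM} to $K$, transport the conclusion to the dual of $C(L)$ via the Banach space isomorphism, and then recover $L$ itself as a closed subspace of the resulting weak$^*$-compact unit ball.

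First, I would invoke Theorem \ref{TeoPropM}: since $K$ is a weakly non-commutative Corson compact with property $(M)$, the weak$^*$ unit ball $(B_{C(K)^*},w^*)$ is weakly non-commutative Corson. This is the only place where property $(M)$ enters.

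Next, let $T\colon C(L)\to C(K)$ be the given isomorphism and put $S=T^{-1}$. The adjoint $S^*\colon C(L)^*\to C(K)^*$ is bounded, linear, injective and weak$^*$-to-weak$^*$ continuous. Its restriction to the weak$^*$-compact set $B_{C(L)^*}$ is a continuous injection into a Hausdorff space, hence a weak$^*$ homeomorphism onto its image. Because $S^*(B_{C(L)^*})\subseteq \|S\|\cdot B_{C(K)^*}$ and the latter is homeomorphic to $(B_{C(K)^*},w^*)$ via scalar multiplication, it is weakly non-commutative Corson; the image $S^*(B_{C(L)^*})$ is a weak$^*$-closed (therefore countably closed) subspace of it, so Lemma \ref{OperzWnoncomm}(1) yields that $S^*(B_{C(L)^*})$ is weakly non-commutative Corson. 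Transporting back through the weak$^*$ homeomorphism, $(B_{C(L)^*},w^*)$ is weakly non-commutative Corson as well.

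Finally, the canonical evaluation $\iota\colon L\to (B_{C(L)^*},w^*)$, $\ell\mapsto \delta_\ell$, is a continuous injection of a compact space into a Hausdorff space, hence a homeomorphism onto its closed image. Thus $L$ is homeomorphic to a closed (so countably closed) subspace of the weakly non-commutative Corson compact space $(B_{C(L)^*},w^*)$, and one last application of Lemma \ref{OperzWnoncomm}(1) gives that $L$ is weakly non-commutative Corson. The substantive input is Theorem \ref{TeoPropM}; the only delicate point is verifying that the adjoint of the isomorphism faithfully transfers the class from $B_{C(K)^*}$ to $B_{C(L)^*}$, which boils down to the standard observation that $S^*$ is a weak$^*$ homeomorphism on the closed unit ball up to a harmless rescaling.
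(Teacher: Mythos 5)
Your proof is correct and follows essentially the same route as the paper: apply Theorem \ref{TeoPropM} to get that $(B_{C(K)^*},w^*)$ is weakly non-commutative Corson, transfer this to $(B_{C(L)^*},w^*)$ through the isomorphism, and finish by viewing $L$ as a closed subspace of $(B_{C(L)^*},w^*)$ via Dirac measures and invoking Lemma \ref{OperzWnoncomm}. The only cosmetic difference is that you re-derive the transfer step by hand (adjoint, weak$^*$-closed image inside a rescaled ball, Lemma \ref{OperzWnoncomm}(1)), whereas the paper simply cites its earlier proposition that non-commutative weakly WLD spaces are stable under isomorphisms, whose proof is exactly your argument.
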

\begin{proof}
Since $K$ is a weakly non-commutative Corson compact space with property $(M)$, using Theorem \ref{TeoPropM} we have that $C(K,\C)$ is non-commutative weakly WLD. Since the class of non-commutative weakly WLD spaces is closed under isomorphisms we have that $C(L,\C)$ is non-commutative weakly WLD. Therefore by definition of non-commutative weakly WLD we have that $(B_{C(L,\C)^{*}},w^{*})$ is a weakly non-commutative Corson compact space. Finally, since $L$ is a closed subset of $(B_{C(L,\C)^{*}},w^{*})$, using Lemma \ref{OperzWnoncomm}, we have that $L$ is weakly non-commutative Corson compact as well. The proof holds for the real case as well.

\end{proof}

Observing that for every ordinal number $\eta$ the topological space $[0,\eta]$ has the property $(M)$ we have the following result.

\begin{cor}
Let $L$ be a compact space. Suppose that $C(L)$ is isomorphic to $C([0,\eta])$, for some ordinal number $\eta$. Then $L$ is a weakly non-commutative Corson compact space.
\end{cor}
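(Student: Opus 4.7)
The plan is to deduce the corollary as a direct application of the preceding Proposition, applied with $K = [0,\eta]$. To do so, two things must be verified: that $[0,\eta]$ is a weakly non-commutative Corson compact space, and that $[0,\eta]$ has property $(M)$.

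The first fact is already in hand: by Theorem \ref{teoOrdinali}(1) (equivalently, Theorem \ref{equivOrdinal}) the compact ordinal space $[0,\eta]$ is weakly non-commutative Corson for every ordinal $\eta$. So no further work is needed for this point.

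The second fact -- property $(M)$ for $[0,\eta]$ -- is exactly the observation stated immediately before the corollary. The reason is standard: every compact ordinal space is scattered (any nonempty subset has a minimum, which is topologically isolated within the subset), so every Radon probability measure on $[0,\eta]$ is purely atomic. The set of atoms of a finite measure is at most countable, hence its closure is separable, and this closure equals the support of the measure. Thus supp$(\mu)$ is separable for every $\mu\in P([0,\eta])$, which is property $(M)$.

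Having both ingredients, the conclusion follows immediately: the previous proposition applied to $K=[0,\eta]$ yields that any compact space $L$ with $C(L)\simeq C([0,\eta])$ is weakly non-commutative Corson. The only non-routine point is checking property $(M)$, but scatteredness reduces this to the trivial observation that the set of atoms of a finite measure is countable, so there is no real obstacle.
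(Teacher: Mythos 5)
Your proof is correct and follows exactly the route the paper intends: apply the preceding proposition to $K=[0,\eta]$, using Theorem \ref{teoOrdinali} for the weakly non-commutative Corson property and the observation that ordinal segments have property $(M)$. Your explicit verification of property $(M)$ via scatteredness and purely atomic measures is a standard argument that the paper leaves implicit, so there is no substantive difference.
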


\noindent
\textbf{Acknowledgements:} The author is grateful to Ond\v{r}ej Kalenda for suggesting the topic and for the many helpful discussions about it.

\noindent
Jacopo Somaglia\\
Dipartimento di Matematica\\
Universit\`{a} degli studi di Milano\\
Via C. Saldini, 50\\
20133 Milano MI, Italy\\
jacopo.somaglia@unimi.it\\
ph: ++39 02 503 16167

\end{document}